\documentclass[11pt, notitlepage]{article}
\usepackage{amssymb,amsmath,comment}
\catcode`\@=11 \@addtoreset{equation}{section}
\def\thesection{\arabic{section}}

\def\theequation{\thesection.\arabic{equation}}
\catcode`\@=12
\usepackage{colortbl}
\usepackage{a4wide}
\usepackage{parskip}

\textwidth=6in \textheight=9in \topmargin=-0.5cm
\oddsidemargin=0.5cm \evensidemargin=0.5cm

\newcommand{\ds} {\displaystyle}
\newcommand{\e}{\varepsilon}
\newcommand{\pa} {\partial}
\newcommand{\al} {\alpha}
\newcommand{\ba} {\beta}
\newcommand{\de} {\delta}

\newcommand{\Om} {\Omega}
\newcommand{\ra} {\rightarrow}
\newcommand{\rp} {\rightharpoonup}

\newcommand{\De} {\Delta}
\newcommand{\la} {\lambda}

\newcommand{\noi} {\noindent}
\newcommand{\na} {\nabla}

\newcommand{\mb} {\mathbb}
\newcommand{\mc} {\mathcal}

\newcommand{\ld} {\langle}
\newcommand{\rd} {\rangle}

\setcounter{page}{1}\pagestyle{myheadings}\markboth{\small xyz} {\small Coron problem for Choquard equation }
\usepackage[all]{xy}
\catcode`\@=11
\def\theequation{\@arabic{\c@section}.\@arabic{\c@equation}}
\catcode`\@=12

\def\QED{\hfill {$\square$}\goodbreak \medskip}

\newtheorem{Theorem}{Theorem}[section]
\newtheorem{Lemma}[Theorem]{Lemma}
\newtheorem{Proposition}[Theorem]{Proposition}
\newtheorem{Corollary}[Theorem]{Corollary}
\newtheorem{Remark}[Theorem]{Remark}
\newtheorem{Definition}[Theorem]{Definition}

\begin{document}

\title
{Coron problem for nonlocal equations involving Choquard nonlinearity }

\author{ Divya Goel$^{\,1}$\footnote{e-mail: {\tt divyagoel2511@gmail.com}}, \ Vicen\c tiu D. R\u adulescu$^{\,{2,3}}$\footnote{e-mail: {\tt vicentiu.radulescu@imar.ro}} \
	and \  K. Sreenadh$^{\,1}$\footnote{
		e-mail: {\tt sreenadh@maths.iitd.ac.in}} \\ \\ $^1\,${\small Department of Mathematics, Indian Institute of Technology Delhi,}\\
{\small	Hauz Khaz, New Delhi-110016, India }
\\ $^2\,${\small Faculty of Applied Mathematics, AGH University of Science and Technology,}\\ {\small al. Mickiewicza 30, 30-059 Krak\'ow, Poland}\\  $^3\,${\small  Institute of Mathematics ``Simion Stoilow" of the Romanian Academy,}\\ {\small P.O. Box 1-764, 014700 Bucharest, Romania} }

\date{}

\maketitle

\begin{abstract}
We consider the following Choquard equation
\[
-\De u = \left(\int_{\Om}\frac{|u(y)|^{2^*_{\mu}}}{|x-y|^{\mu}}dy\right)|u|^{2^*_{\mu}-2}u,   \; \text{in}\;
\Om,\quad
 u = 0 \;    \text{ on } \pa \Om ,
\]
where $\Om$ is a smooth  bounded domain in $\mathbb{R}^N$ ($N\geq 3$),  $2^*_{\mu}=(2N-\mu)/(N-2)$. This paper is concerned with the existence of a positive  high-energy solution of the above problem in an annular-type domain when the inner hole is sufficiently small.
\\
\noi \textbf{Key words:} Choquard nonlinearity, Coron problem, stationary nonlinear Schr\"odinger-Newton equation, Riesz potential, critical exponent.
\\
\noi \textbf{2010 Mathematics Subject Classification}: 35A15, 35J60, 35J20.
\end{abstract}

\section{Introduction}
In this paper, we study the existence of a positive solution of the Choquard equation. More precisely, we consider the problem
$$
(P) \qquad -\De u = \left(\int_{\Om}\frac{|u(y)|^{2^*_{\mu}}}{|x-y|^{\mu}}dy\right)|u|^{2^*_{\mu}-2}u   \; \text{in}\;
\Om,\quad
 u = 0 \;    \text{ on } \pa \Om,
$$
where $\Om$ is a smooth  bounded domain in $\mathbb{R}^N( N\geq 3)$,  $2^*_{\mu}=\frac{2N-\mu}{N-2}$,  $0< \mu <N$.

The work on elliptic equations involving critical Sobolev exponent over non-contractible domains was initiated by J.-M.~Coron in 1983. Indeed,  Coron \cite{coron}  proved the existence of a positive solution of the following critical elliptic problem
\begin{align*}
(Q) \qquad -\Delta u = u ^{\frac{N+2}{N-2}},  \; u>0\;  \text{in}\;
\Om,\;
u = 0 \; \quad   \text{on} \; \; \pa \Om ,
\end{align*}
\noi where $\Om$ is a smooth bounded domain in $\mathbb{R}^N$ and satisfies the following conditions:
there exist constants $0<R_1<R_2<\infty$ such that
\begin{align}\label{co15}
\{ x \in \mathbb{R}^N ;\; R_1<|x|<R_2 \} \subset\Om, \qquad
\{ x \in \mathbb{R}^N ;\; |x|<R_1 \} \nsubseteq \overline{\Om}.
\end{align}
Later on, A.~Bahri and J.-M.~Coron \cite{bahri}  proved that if there exists a positive integer $d$ such that $H_d(\Om,\mathbb{Z}_2) \not =0 $  (where $H_d(\Om,\mathbb{Z}_2)$ the homology of dimension $d$ of $\Om$ with $\mathbb{Z}_2$ coefficients), then problem $(Q) $ has a positive solution.

V.~Benci and G.~Cerami \cite{benci1}  considered the equation
\begin{equation}\label{co16}
-  \De u +\la u  = u^{p-1} , \;
 u >0 \text{ in } \Om,\;  u=0 \text{ on } \pa \Om,
\end{equation}
\noi where $ \Om \subset \mathbb{R}^N, N\geq 3$ is a smooth bounded domain and $2<p <2^*$, $\la \in \mathbb{R}_+ \cup \{0\}$. With the help of  Ljusternik-Schnirelmann theory, Benci and Cerami showed  that  there exists a function $\overline{\la}: (2,\; 2^*)\ra \mathbb{R}_+ \cup \{0\}$ such that for all $\la \geq \overline{\la}(p)$, problem \eqref{co16} has at least $\text{cat}\,(\Om)$ distinct solutions.
 We cite \cite{benci2, benci3, benci4, dancer, squassina1, musso,  rey, struwe2} and the references therein for the work on the existence of  solutions over a non-contractible domain.

We recall that the  Choquard equation \eqref{co20}  was first introduced in  the pioneering work of H.~Fr\"ohlich \cite{froh} and S.~Pekar \cite{pekar} for the  modeling of  quantum polaron:
  \begin{equation}\label{co20}
  -\De u +u = \left(\frac{1}{|x|}* |u|^2\right) u \text{ in } \mathbb{R}^3.
  \end{equation}
As pointed out by Fr\"ohlich \cite{froh} and Pekar, this model corresponds to the study of  free electrons
in an ionic lattice interact with phonons associated to deformations of the
lattice or with the polarisation that it creates on the medium (interaction
of an electron with its own hole).  In the
approximation to Hartree-Fock theory of one component plasma,
Choquard used  equation \eqref{co20} to
describe an electron trapped in its own hole,

The Choquard equation is also known as the
Schr\"odinger-Newton equation
in models coupling the Schr\"odinger equation of
quantum physics together with nonrelativistic
Newtonian gravity. The equation can also be derived
from the Einstein-Klein-Gordon and Einstein-Dirac system. Such a
model was proposed for boson stars and for the collapse of galaxy
fluctuations of scalar field dark matter. We refer for details to A.~Elgart and B.~Schlein \cite{elgart}, D. Giulini and A. Gro\ss ardt \cite{giulini}, K.R.W.~Jones \cite{jones}, and F.E.~Schunck and E.W.~Mielke \cite{schunk}.  R.~Penrose \cite{penrose, penrose1} proposed equation \eqref{co20}  as
a model of self-gravitating matter in which quantum state reduction was understood as
a gravitational phenomenon.

   As pointed out by E.H.~Lieb \cite{ehleib}, Ph.~Choquard used equation \eqref{co20}  to study steady states of the one component plasma approximation in the Hartree-Fock theory.  Classification of solutions of  \eqref{co20}  was first studied by L.~Ma and L.~Zhao \cite{ma}. For the broad survey of Choquard equations we  refer to V.~Moroz and J.~Van Schaftingen \cite{moroz}  and references therein.

    Recently, F.~Gao and M.~Yang \cite{yang} studied the Brezis-Nirenberg type result for the following problem
\begin{equation}\label{co17}
-\De u = \la u+ \left(\int_{\Om}\frac{|u(y)|^{2^*_{\mu}}}{|x-y|^{\mu}}dy\right)|u|^{2^*_{\mu}-2}u \text{ in } \Om, \quad
u=0 \text{ on } \pa \Om,
\end{equation}
\noi where $0<\la$, $0<\mu<N$, $2^*_{\mu}=\frac{2N-\mu}{N-2}$, $\Om$ is a smooth bounded domain in $\mathbb{R}^N$ and
 $2^*_{\mu}$ is critical exponent in the sense of Hardy--Littlewood--Sobolev inequality \eqref{co9}. Authors   proved the Pohozaev identity for the equation \eqref{co17} and  used variational methods and the minimizers of the best constant $S_{H,L}$ (defined in \eqref{co10}) to show the existence, non-existence of solution depending on the range of $\la$. We cite F.~Gao {\it et al.} \cite{yangjmaa, gaoyang} for the  Choquard equation with critical exponent in the sense of Hardy--Littlewood--Sobolev inequality. However,
the existence and multiplicity of solutions of nonlocal equations  over  non-contractible domains is still  an open question.  Therefore, it is essential to study the existence of a positive solution of elliptic  equations involving convolution-type nonlinearity in non-contractible domains.

 Inspiring by these results,  we study in the present article  the  Coron  problem  for the problem $(P)$. More precisely, we show the  existence of a high-energy positive solution in a non-contractible bounded domain particularly an annulus when the inner hole is sufficiently small. The functional associated  with $(P)$  is not  $C^2$  when  $\mu > \min\{4, N\}$ and this makes the problem $(P)$ more challenging.

 In order to achieve the  desired aim  we  first prove the non-existence result using the   Pohozaev identity  for Choquard equation on $\mathbb{R}^N_+$.
 We  also prove the global compactness lemma for Choquard equation in bounded domains. In case of $\mu=0$, such a lemma   has been proved by M.~Struwe \cite{struwe1} and later generalized to the $p$-Laplacian case by Mercuri and Willem \cite{mercuri}.  In case of   $0<\mu<N$,  the method of defining  L\'evy concentration function is not useful.  In the present article we gave the proof of  global compactness Lemma \ref{cothm3}  by introducing the notion of Morrey spaces.  Finally, by  using the concentration-compactness principle together with the deformation lemma,   we prove the existence of high-energy positive solution. To the best of our knowledge, there is no work on Coron's problem for  Choquard equation.

 We now state the  main result of this paper.
\begin{Theorem}\label{cothm1}
Assume that $\Om$ is a bounded domain in $\mathbb{R}^N$ satisfying the condition \eqref{co15}.
If $\ds \frac{R_2}{R_1}$ is sufficiently large   then problem $(P)$ admits a positive high-energy solution.
\end{Theorem}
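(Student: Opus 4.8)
The plan is to realise the solution as a critical point of the energy functional
\[
I(u)=\frac12\int_\Om|\na u|^2\,dx-\frac{1}{2\cdot 2^*_{\mu}}\int_\Om\!\int_\Om\frac{|u(x)|^{2^*_{\mu}}|u(y)|^{2^*_{\mu}}}{|x-y|^\mu}\,dx\,dy,\qquad u\in H_0^1(\Om),
\]
by a Coron-type min--max scheme. Write $c_\infty=\frac{N+2-\mu}{2(2N-\mu)}\,S_{H,L}^{\frac{2N-\mu}{N+2-\mu}}$ for the ground-state energy of the limiting equation on $\mathbb{R}^N$, i.e.\ the energy carried by one Choquard bubble $U_{\e,p}$. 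First I would record the variational bookkeeping: by the Hardy--Littlewood--Sobolev inequality $I$ is well defined and of class $C^1$ (only $C^1$, and not $C^2$ when $\mu>\min\{4,N\}$, so the argument must rely on the $C^1$ deformation lemma rather than on Morse theory), the Nehari manifold $\mathcal{N}=\{u\neq0:\langle I'(u),u\rangle=0\}$ is a natural constraint, and $\inf_{\mathcal{N}}I=c_\infty$ with the infimum never attained on a bounded domain, since a minimiser would be a translated and dilated extremal of $S_{H,L}$, which cannot lie in $H_0^1(\Om)$. A direct computation on $\mathcal{N}$ moreover gives $I(u_0)=\frac{N+2-\mu}{2(2N-\mu)}\|\na u_0\|_2^2\ge c_\infty$ for every nontrivial solution $u_0$ of $(P)$, with $I(u_0)=0$ only if $u_0=0$.

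The second step is compactness in the energy range that the min--max will reach. By the global compactness Lemma \ref{cothm3}, any Palais--Smale sequence for $I$ at a level $c$ splits, along a subsequence, as a solution $u_0$ of $(P)$ (possibly $u_0=0$) plus $k\ge0$ rescaled, translated Choquard bubbles, with $c=I(u_0)+k\,c_\infty+o(1)$; the non-existence result on the half-space $\mathbb{R}^N_+$ proved through the Pohozaev identity is exactly what forbids concentration on $\pa\Om$, so only interior bubbles occur. Combined with the inequality $I(u_0)\ge c_\infty$, this yields the $(PS)_c$ condition for every $c\in(c_\infty,2c_\infty)$: if $u_0=0$ then $c=k\,c_\infty\notin(c_\infty,2c_\infty)$, a contradiction; if $u_0\neq0$ then $c\ge c_\infty+k\,c_\infty$ forces $k=0$, hence strong convergence to the nontrivial solution $u_0$.

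The core of the proof is to produce a critical value inside $(c_\infty,2c_\infty)$ from the topology of the annular domain. Fix a small $\de>0$; on the sublevel $\mathcal{N}^\de:=\{u\in\mathcal{N}:I(u)<c_\infty+\de\}$ introduce the barycentre map $\ba(u)=\big(\int_\Om x\,|\na u|^2\,dx\big)\big/\big(\int_\Om|\na u|^2\,dx\big)$, which is well defined there and sends a bubble concentrated near $p$ to a point near $p$. Using truncated bubbles $U_{\e,p}$ with centres $p$ on the sphere $\{|x|=\sqrt{R_1R_2}\}\subset\Om$ (which lies in the annulus of \eqref{co15}), one builds for $\e$ small a continuous map $\Phi:S^{N-1}\to\mathcal{N}^\de$ with $\ba\circ\Phi$ homotopic to the identity. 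One then sets
\[
c=\inf_{h\in\mathcal{H}}\ \max_{y\in\ov{B}^{\,N}}\,I(h(y)),\qquad
\mathcal{H}=\bigl\{h\in C(\ov{B}^{\,N},\mathcal{N}):h|_{\pa B^N}=\Phi\bigr\}.
\]
A degree argument gives the lower bound $c>c_\infty+\de$: were $\max I(h)\le c_\infty+\de$ for some $h\in\mathcal{H}$, then $h(\ov B^{\,N})\subset\mathcal{N}^\de$ and a suitably renormalised $\ba\circ h$ composed with radial projection onto $S^{N-1}$ would furnish a nullhomotopy of $\mathrm{id}_{S^{N-1}}$, which is absurd --- and this is exactly the point where the hole $\{|x|<R_1\}\not\subseteq\ov\Om$ of \eqref{co15} enters, since it prevents the barycentre of such a configuration from reaching the centre.

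The main obstacle, and the only place where the hypothesis ``$R_2/R_1$ large'' is used, is the upper bound $c<2c_\infty$, obtained by exhibiting a convenient $h_0\in\mathcal{H}$: a Coron two-bubble family that on $\pa B^N$ equals $\Phi$ and is deformed across $\ov B^{\,N}$ through configurations interpolating between a single bubble and a pair of well-separated bubbles near the inner and outer spheres. Estimating $\max_{y}I(h_0(y))$ requires the precise $\e\to0$ asymptotics of the nonlocal interaction $\int\!\int|U_{\e,p}|^{2^*_{\mu}}|U_{\e,q}|^{2^*_{\mu}}|x-y|^{-\mu}\,dx\,dy$ for bubbles of different scales and centres, together with sharp bounds for the error created by truncating each bubble to $H_0^1(\Om)$; these computations are genuinely heavier than the classical single-integral Aubin--Talenti estimates, and the quantitative gain that keeps the maximum strictly below $2c_\infty$ is precisely what forces $R_2/R_1$ to be large (so that the two bubbles are essentially non-interacting and each truncation error is negligible). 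Once $c_\infty<c<2c_\infty$ is known, the $(PS)_c$ property of the second step and the $C^1$ deformation lemma provide a nontrivial critical point $u$ with $I(u)=c>\inf_{\mathcal{N}}I=c_\infty$, i.e.\ a high-energy solution. Running the whole scheme for the functional with $u_+^{2^*_{\mu}}$ in place of $|u|^{2^*_{\mu}}$ (Lemma \ref{cothm3} and all the estimates apply verbatim) makes the critical point nonnegative, and the strong maximum principle then upgrades this to $u>0$ in $\Om$, completing the proof.
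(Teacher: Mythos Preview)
Your overall architecture---Palais--Smale compactness on $(c_\infty,2c_\infty)$ via Lemma~\ref{cothm3}, together with a topological obstruction read through a barycentre map and the deformation lemma---matches the paper's. The decisive discrepancy is in how you secure the \emph{upper} bound on the min--max level. You propose a ``Coron two-bubble family'' interpolating between single bubbles on $\partial B^N$ and pairs of well-separated bubbles in the interior, and you say this is where $R_2/R_1\gg1$ is spent, via sharp asymptotics of the mixed interaction $\iint|U_{\e,p}|^{2^*_\mu}|U_{\e,q}|^{2^*_\mu}|x-y|^{-\mu}\,dx\,dy$. That is neither how Coron's original argument runs nor what the paper does, and as written it is a genuine gap.

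The paper's test family is a \emph{single}-bubble family: for $\sigma\in\Sigma$ and $t\in[0,1)$ one takes the truncated instanton $f_t^\sigma$ centred at $t\sigma$ with scale $1-t$; at $t=0$ this collapses to a fixed function $f_0$ independent of $\sigma$, so the family already extends continuously over the cone $\Sigma\times[0,1)/(\Sigma\times\{0\})$, i.e.\ over~$\overline B^{\,N}$. Lemma~\ref{lem6} then gives $\sup_{\sigma,t} S_{H,L}(f_t^\sigma,\Omega)\to S_{H,L}$ uniformly as $R\to\infty$, so for $R_2/R_1$ large the \emph{entire} family sits well below the second critical level. No two-bubble interaction estimate is needed; the thick-annulus hypothesis is spent on making the \emph{single}-bubble truncation error small. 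By contrast, a genuine two-bubble configuration on the Nehari manifold has Rayleigh quotient $\approx 2^{\frac{N-\mu+2}{2N-\mu}}S_{H,L}$, i.e.\ energy $\approx 2c_\infty$, and pushing this strictly below the threshold would require a sign analysis of the subleading interaction terms that you have not supplied and that the problem does not call for.

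A smaller omission: your lower bound $c>c_\infty+\delta$ assumes that $\beta(u)$ lies near $\overline\Omega$ for every $u\in\mathcal{N}^\delta$. This is not immediate from the definition of the barycentre; it needs the concentration-compactness lemma (as invoked explicitly in the paper's proof) to force each $u$ at level close to $S_{H,L}$ to concentrate at a point of $\overline\Omega$, so that $G(u)$ lands in a tubular neighbourhood admitting a retraction $\pi$ onto $\overline\Omega$. With the single-bubble filling and this concentration step in place, your outline becomes the paper's proof.
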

Turning to the layout of the paper, in  Section 2 we assemble  notations and preliminary results.  In section 3, we give the classification of all non negative solutions of Choquard equation. In section 4, we analyze the Palais-Smale sequences. In section 5, we prove our main result  Theorem \ref{cothm1}.

\section{Preliminary results }
This section is devoted to the variational formulation, Pohozaev identity and non-existence result. The outset of the variational framework starts from the following  Hardy--Littlewood--Sobolev inequality. We refer to E.H.~Lieb and M.~Loss \cite{leib} for more details.
 \begin{Proposition}
Let $t,r>1$ and $0<\mu <N$ with $1/t+\mu/N+1/r=2$, $f\in L^t(\mathbb{R}^N)$ and $h\in L^r(\mathbb{R}^N)$. There exists a sharp constant $C(t,r,\mu,N)$ independent of $f,h$, such that
\begin{equation}\label{co9}
\int_{\mathbb{R}^N}\int_{\mathbb{R}^N}\frac{f(x)h(y)}{|x-y|^{\mu}}~dxdy \leq C(t,r,\mu,N) \|f\|_{L^t}\|h\|_{L^r}.
\end{equation}
If $t=r=2N/(2N-\mu)$, then
\begin{align*}
C(t,r,\mu,N)=C(N,\mu)= \pi^{\frac{\mu}{2}}\frac{\Gamma(\frac{N}{2}-\frac{\mu}{2})}{\Gamma(N-\frac{\mu}{2})}\left\lbrace \frac{\Gamma(\frac{N}{2})}{\Gamma(\frac{\mu}{2})}\right\rbrace^{-1+\frac{\mu}{N}}.
\end{align*}
Equality holds in  \eqref{co9} if and only if $f/h\equiv constant$ and
\begin{align*}
h(x)= A(\gamma^2+|x-a|^2)^{(2N-\mu)/2}
\end{align*}
 for some $A\in \mathbb{C}, 0\neq \gamma \in \mathbb{R}$ and $a \in \mathbb{R}^N$.\QED
\end{Proposition}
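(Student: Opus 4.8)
The plan is to establish \eqref{co9} in two stages: first the qualitative assertion that the best constant $C(t,r,\mu,N)$ is finite for every admissible triple $(t,r,\mu)$, and then the sharp evaluation of it, together with the characterisation of equality cases, in the conformally invariant case $t=r=p:=2N/(2N-\mu)$. For the inequality itself, replace $f,h$ by $|f|,|h|$ so that $f,h\ge 0$. The kernel $|\cdot|^{-\mu}$ lies in the weak-Lebesgue (Lorentz) space $L^{N/\mu,\infty}(\mathbb{R}^N)$, because $\bigl|\{\,x:|x|^{-\mu}>\lambda\,\}\bigr|=c_N\lambda^{-N/\mu}$; by the weak Young (O'Neil) inequality, convolution with $|\cdot|^{-\mu}$ maps $L^t(\mathbb{R}^N)$ boundedly into $L^s(\mathbb{R}^N)$ whenever $1+1/s=1/t+\mu/N$, and the choice $s=r'$ turns this into exactly the scaling relation $1/t+\mu/N+1/r=2$. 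H\"older's inequality against $h\in L^r$ then yields \eqref{co9} with a finite $C$, and passing to the infimum over all admissible constants produces the sharp one. (An equivalent elementary route: by the Riesz rearrangement inequality one may reduce to radially decreasing $f,h$ and estimate separately on the dyadic shells $\{\,|x-y|\sim 2^k\,\}$.) This establishes existence of the sharp constant but not its value.

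For the sharp value in the diagonal case I would follow Lieb's conformal-geometry argument. At the optimum necessarily $f/h\equiv\text{const}$, so one is maximising $\iint f(x)f(y)|x-y|^{-\mu}\,dx\,dy$ over $\{\,f\ge 0:\|f\|_{L^p}=1\,\}$. By Riesz rearrangement, replacing $f$ by its symmetric decreasing rearrangement does not decrease the bilinear form and preserves $\|f\|_{L^p}$, so one may restrict to radially decreasing $f$. The structural point is that the normalised functional becomes invariant under the full conformal group after transplanting to the sphere: under inverse stereographic projection $\mathbb{R}^N\to S^N$, writing $f(x)=\bigl(\tfrac{2}{1+|x|^2}\bigr)^{N/p}F(\xi)$ and using $|\xi-\eta|=2|x-y|\bigl((1+|x|^2)(1+|y|^2)\bigr)^{-1/2}$ together with $d\xi=\bigl(\tfrac{2}{1+|x|^2}\bigr)^N dx$, one checks that $\iint_{\mathbb{R}^N}f(x)f(y)|x-y|^{-\mu}\,dx\,dy=\iint_{S^N}F(\xi)F(\eta)|\xi-\eta|^{-\mu}\,d\xi\,d\eta$ while $\|f\|_{L^p(\mathbb{R}^N)}=\|F\|_{L^p(S^N)}$. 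On the sphere the competing-symmetries method of Carlen and Loss applies: iterating the composition of symmetric decreasing rearrangement with a fixed conformal rotation yields a sequence along which the functional is non-decreasing and which converges in $L^p(S^N)$ to a function invariant under both operations, and the only such functions are the constants on $S^N$. Pulling this back to $\mathbb{R}^N$, the optimal function is $h(x)=A(\gamma^2+|x-a|^2)^{-(2N-\mu)/2}$, and tracking the cases of equality (in Riesz rearrangement and in the conformal invariance) along the iteration gives the full classification of equality cases in \eqref{co9}. The value of $C(N,\mu)$ is then read off by evaluating the form at $F\equiv 1$: one has $C(N,\mu)=|S^N|^{1-2/p}\int_{S^N}|\xi-\eta_0|^{-\mu}\,d\xi$, and the remaining one-dimensional integral $\int_{S^N}|\xi-\eta_0|^{-\mu}\,d\xi=|S^{N-1}|\int_0^{\pi}\bigl(2\sin(\theta/2)\bigr)^{-\mu}\sin^{N-1}\theta\,d\theta$ is a Beta integral; inserting the values of $|S^{N-1}|$ and $|S^N|$ and using the duplication formula for $\Gamma$ gives, after simplification, the stated expression for $C(N,\mu)$.

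The genuinely delicate step is the compactness and rigidity in the second stage. On $\mathbb{R}^N$ the extremal problem is invariant under translations and dilations, so maximising sequences may a priori spread out or concentrate and the direct method fails; the conformal invariance is precisely what rescues the argument, moving the problem to $S^N$, where the residual non-compactness is purely conformal and is controlled either by the competing-symmetries iteration or by a concentration--compactness analysis adapted to the conformal group. Everything else --- the weak Young inequality, the monotonicity under rearrangement, the conformal transplantation to $S^N$, and the final $\Gamma$-function bookkeeping --- is routine by comparison.
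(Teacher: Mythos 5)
This proposition is a classical result (the sharp Hardy--Littlewood--Sobolev inequality); the paper does not prove it but quotes it from Lieb and Loss, so there is no ``paper proof'' to compare against. Your outline is a faithful and essentially correct sketch of the known argument: weak Young/O'Neil for the finiteness of the constant at general admissible exponents, then Riesz rearrangement, conformal transplantation to $S^N$, and the Carlen--Loss competing-symmetries iteration (or Lieb's original compactness argument) for the sharp constant and the equality cases in the diagonal case, with the value of $C(N,\mu)$ read off from $F\equiv 1$ via a Beta integral. Two small points are worth flagging. First, the reduction to $f/h\equiv\mathrm{const}$ should not be phrased as a property ``at the optimum'' before an optimizer is known to exist; the clean route is to observe that the Riesz kernel $|x-y|^{-\mu}$ is positive definite for $0<\mu<N$, so that Cauchy--Schwarz for the associated bilinear form gives
\begin{equation*}
\iint \frac{f(x)h(y)}{|x-y|^{\mu}}\,dx\,dy\le\Bigl(\iint \frac{f(x)f(y)}{|x-y|^{\mu}}\,dx\,dy\Bigr)^{1/2}\Bigl(\iint \frac{h(x)h(y)}{|x-y|^{\mu}}\,dx\,dy\Bigr)^{1/2},
\end{equation*}
which identifies the off-diagonal supremum with the diagonal one \emph{before} any existence statement. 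Second, your extremal $h(x)=A(\gamma^2+|x-a|^2)^{-(2N-\mu)/2}$ carries the correct negative exponent; the exponent as typeset in the statement above (without the minus sign) is a typo, since such an $h$ would not lie in $L^{2N/(2N-\mu)}(\mathbb{R}^N)$.
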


We consider  the following functional space
\begin{align*}
D^{1,2}(\mathbb{R}^N):=\{u\in L^{2^*}(\mathbb{R}^N)\; : \; \na u \in L^2(\mathbb{R}^N) \},
\end{align*}
endowed with the norm defined as
\begin{equation*}
\|u\|:= \left(\int_{\mathbb{R}^N}|\na u|^2 dx \right)^{\frac{1}{2}}.
\end{equation*}
The space $D^{1,2}_0(\Om)$ is defined as the  closure of $C_c^{\infty}(\Om)$ in $D^{1,2}(\mathbb{R}^N) $.
\begin{Definition}
	A function $u \in  D^{1,2}_0(\Om)$ is said to be a solution of $(P)$ if $u$ satisfies
	\begin{align*}
	\int_{\Om}\na u \na \phi ~dx = \int_{\Om} \int_{\Om} \frac{|u(x)|^{2^{*}_{\mu}}|u(y)|^{2^{*}_{\mu}-2}u(y)\phi(y)}{|x-y|^{\mu}}~dxdy \text{ for all } \phi \in D^{1,2}_0(\Om).
	\end{align*}
\end{Definition}

\textbf{Notation.} We define $u_+= \text{max}(u,0)$ and $u_{-}= \text{max}(-u,0)$ for all $u \in D^{1,2}(\mathbb{R}^N) $. Moreover, we set $\mathbb{R}^N_+:=\{ x \in \mathbb{R}^N\; |\;  x_N>0\}$ and we denote by $*$ the standard convolution operator.

\noi Consider functionals $I: D^{1,2}_0(\Om)\ra \mathbb{R}$ and $I_{\infty}: D^{1,2}(\mathbb{R}^N)\ra \mathbb{R}$ as
 \begin{align*}
 I(u) &= \frac12 \int_{\Om} |\nabla u|^2~dx - \frac{1}{2.2^*_{\mu}} \int_{\Om}\int_{\Om} \frac{|u_+(x)|^{2^{*}_{\mu}}|u_+(y)|^{2^{*}_{\mu}}}{|x-y|^{\mu}}~dxdy, \quad  u \in D^{1,2}_0(\Om) \\
 I_{\infty}(u) &= \frac12 \int_{\mathbb{R}^N} |\nabla u|^2~dx- \frac{1}{2.2^*_{\mu}}  \int_{\mathbb{R}^N}\int_{\mathbb{R}^N}\frac{|u_+(x)|^{2^{*}_{\mu}}|u_+(y)|^{2^{*}_{\mu}}}{|x-y|^{\mu}}~dxdy, \quad  u \in D^{1,2}(\mathbb{R}^N).
 \end{align*}
By the Hardy--Littlewood--Sobolev inequality,  we have
\begin{align*}
 \left(\ds \int_{\mathbb{R}^N}\int_{\mathbb{R}^N}\frac{|u(x)|^{2^{*}_{\mu}}|u(y)|^{2^{*}_{\mu}}}{|x-y|^{\mu}}~dxdy\right)^{\frac{1}{2^{*}_{\mu}}}\leq C(N,\mu)^{\frac{2N-\mu}{N-2}}\|u\|_{L^{2^*}}^2,
\end{align*}
 where $2^*= \frac{2N}{N-2}$. This implies that $I \in C^1(D_0^{1,2}(\Om),\mathbb{R})$ and $I_{\infty}\in C^1(D^{1,2}(\mathbb{R}^N),\mathbb{R})$. The best constant for the embedding $D^{1,2}(\mathbb{R}^N)$ into  $L^{2^*}(\mathbb{R}^N)$ is defined as
\begin{align}\label{co11}
S=\inf_{ u \in D^{1,2}(\mathbb{R}^N) \setminus \{0\}} \bigg\{ \ds \int_{\mathbb{R}^N}|\nabla u|^2dx :\int_{R^N}|u|^{2^*}~dx=1\bigg\}.
\end{align}
Consequently, we define
\begin{align}\label{co10}
S_{H,L}= \inf_{ u \in D^{1,2}(\mathbb{R}^N)\setminus \{0\}} \left\{  \int_{\mathbb{R}^N}|\nabla u|^2 dx:\;  \int_{\mathbb{R}^{N}} \int_{\mathbb{R}^N } \frac{|u(x)|^{2^{*}_{\mu}}|u(y)|^{2^{*}_{\mu}}}{|x-y|^{\mu}}~dxdy=1 \right\}.
\end{align}
It was established by G.~Talenti \cite{talenti} that the  best constant $S$ is achieved if and only if $u$ is of the form
\begin{align*}
\left(
\frac{t}{t^2+|x-(1-t)\sigma|^2} \right)^{\frac{N-2}{2}} \text{ for } \sigma \in \Sigma:=\{x\in \mathbb{R}^N: |x|=1\} \text{ and } t \in (0,1] .
\end{align*}

Properties of the best constant $S_{H,L}$ were established by F.~Gao and M.~Yang \cite{yang}. We recall the following property.
\begin{Lemma}\label{lem5}
The constant $S_{H,L}$ defined in \eqref{co10} is achieved if and only if
\begin{align*}
u=C\left(\frac{b}{b^2+|x-a|^2}\right)^{\frac{N-2}{2}}\,,
\end{align*}
where $C>0$ is a fixed constant, $a\in \mathbb{R}^N$ and $b\in (0,\infty)$ are parameters. Moreover,
\begin{align*}
S_{H,L}=\frac{S}{C(N,\mu)^{\frac{N-2}{2N -\mu}}}\,,
\end{align*}
\noi where $S$ is defined  as in \eqref{co11}.
\end{Lemma}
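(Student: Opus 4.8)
The plan is to bound the Dirichlet energy $\int_{\mathbb{R}^N}|\nabla u|^2\,dx$ from below by combining the Sobolev inequality \eqref{co11} with the Hardy--Littlewood--Sobolev inequality \eqref{co9}, and then to identify precisely when both bounds are attained. Given $u\in D^{1,2}(\mathbb{R}^N)\setminus\{0\}$, I would apply \eqref{co9} with $t=r=\frac{2N}{2N-\mu}$ and $f=h=|u|^{2^*_{\mu}}$; since $2^*_{\mu}\cdot\frac{2N}{2N-\mu}=2^*$, this gives
$$\int_{\mathbb{R}^N}\int_{\mathbb{R}^N}\frac{|u(x)|^{2^*_{\mu}}|u(y)|^{2^*_{\mu}}}{|x-y|^{\mu}}\,dx\,dy\le C(N,\mu)\,\|u\|_{L^{2^*}}^{2\cdot 2^*_{\mu}}.$$
Restricting to $u$ admissible in \eqref{co10} (so that the left-hand side equals $1$) yields $\|u\|_{L^{2^*}}^2\ge C(N,\mu)^{-(N-2)/(2N-\mu)}$, and then $\int_{\mathbb{R}^N}|\nabla u|^2\,dx\ge S\,\|u\|_{L^{2^*}}^2\ge S\,C(N,\mu)^{-(N-2)/(2N-\mu)}$. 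Taking the infimum over such $u$ gives $S_{H,L}\ge S/C(N,\mu)^{(N-2)/(2N-\mu)}$.

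For the reverse inequality, I would test \eqref{co10} with the Aubin--Talenti functions $U_{a,b}(x)=\big(b/(b^2+|x-a|^2)\big)^{(N-2)/2}$ (up to a positive constant), which realise $S$ by Talenti's theorem \cite{talenti}. A direct computation shows $U_{a,b}^{2^*_{\mu}}(x)=b^{(2N-\mu)/2}\,(b^2+|x-a|^2)^{-(2N-\mu)/2}$, which is exactly of the form for which equality holds in \eqref{co9} with $f\equiv h$. Consequently, for $u=U_{a,b}$ both the Hardy--Littlewood--Sobolev step and the Sobolev step above are equalities, so after rescaling $U_{a,b}$ by the positive constant $C$ that normalises the double integral in \eqref{co10} to $1$, the chain of inequalities collapses and yields $S_{H,L}=S/C(N,\mu)^{(N-2)/(2N-\mu)}$, together with the fact that $C\big(b/(b^2+|x-a|^2)\big)^{(N-2)/2}$ attains $S_{H,L}$.

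It remains to prove the ``only if'' part. If $u$ achieves $S_{H,L}$, then since the constraint in \eqref{co10} depends only on $|u|$ while $\int_{\mathbb{R}^N}|\nabla|u||^2\,dx\le\int_{\mathbb{R}^N}|\nabla u|^2\,dx$, the function $|u|$ is also a minimiser; hence I may assume $u\ge 0$, and then every inequality in the chain above must be an equality for $u$. Equality in the Sobolev inequality, via Talenti's classification of the extremals of $S$, forces $u$ to be a positive multiple of some $U_{a,b}$, that is $u=C\big(b/(b^2+|x-a|^2)\big)^{(N-2)/2}$, which is the asserted form. The one point that genuinely requires care is the compatibility of the two equality cases: one must check, using the equality statement of the Hardy--Littlewood--Sobolev inequality, that a Sobolev extremal raised to the power $2^*_{\mu}$ is again an extremal in \eqref{co9} (so that no information is lost in that step), and conversely that the equality condition in \eqref{co9} does not introduce profiles inconsistent with being a Sobolev extremal — this is what pins the minimiser down to the stated family.
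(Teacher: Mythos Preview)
Your argument is correct. The paper does not actually prove this lemma: it merely recalls it from Gao and Yang \cite{yang}, so there is no proof in the paper to compare against. Your approach --- chaining the Hardy--Littlewood--Sobolev inequality with the Sobolev inequality to get the lower bound, testing with Aubin--Talenti bubbles to saturate both inequalities simultaneously, and then reading off the equality cases for the ``only if'' direction --- is exactly the standard proof that appears in \cite{yang}.
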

The following property was established in \cite{yang}.
\begin{Lemma}
	For $N\geq 3$ and $0<\mu<N$. Then
	\begin{align*}
	\|.\|_{NL}:= \left(\ds \int_{\mathbb{R}^N}\int_{\mathbb{R}^N}\frac{|.|^{2^{*}_{\mu}}|.|^{2^{*}_{\mu}}}{|x-y|^{\mu}}~dxdy\right)^{\frac{1}{2.2^{*}_{\mu}}}
	\end{align*}
	defines a norm on $L^{2^*}(\mathbb{R}^N)$.
\end{Lemma}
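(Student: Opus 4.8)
The plan is to verify the three norm axioms directly, the only nontrivial one being the triangle inequality. Throughout write $q:=2^*_{\mu}=\frac{2N-\mu}{N-2}$ (note $q>1$, since $2N-\mu>N>N-2$) and
\[
B(f,g):=\int_{\mathbb{R}^N}\int_{\mathbb{R}^N}\frac{f(x)g(y)}{|x-y|^{\mu}}\,dx\,dy ,
\]
so that $\|u\|_{NL}^{2q}=B(|u|^{q},|u|^{q})$. First I would observe that $u\in L^{2^*}(\mathbb{R}^N)$ implies $|u|^{q}\in L^{2N/(2N-\mu)}(\mathbb{R}^N)$, so by the Hardy--Littlewood--Sobolev inequality \eqref{co9} the quantity $\|u\|_{NL}$ is finite; homogeneity $\|tu\|_{NL}=|t|\,\|u\|_{NL}$ is immediate from the definition; and if $\|u\|_{NL}=0$ then, the integrand being nonnegative, $|u(x)|^{q}|u(y)|^{q}=0$ for a.e.\ $(x,y)$, which forces $u=0$ a.e.\ (otherwise $\{u\neq 0\}$ has positive measure and the integrand is strictly positive on a positive-measure subset of $\mathbb{R}^N\times\mathbb{R}^N$).

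Next I would reduce the triangle inequality to nonnegative functions: since $|u+v|\le|u|+|v|$ pointwise and the kernel $|x-y|^{-\mu}$ is positive, $B$ is monotone under pointwise domination of nonnegative functions, hence $\|u+v\|_{NL}\le\big\||u|+|v|\big\|_{NL}$, and it is enough to prove $\|a+b\|_{NL}\le\|a\|_{NL}+\|b\|_{NL}$ for $a,b\ge 0$. For this I would introduce the measure $d\nu(x,y):=|x-y|^{-\mu}\,dx\,dy$ on $\mathbb{R}^N\times\mathbb{R}^N$ and, for $h\ge 0$, the tensor function $(h\otimes h)(x,y):=h(x)h(y)$, so that $\|h\|_{NL}^{2}=\|h\otimes h\|_{L^{q}(\nu)}$. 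Expanding $(a+b)\otimes(a+b)=a\otimes a+a\otimes b+b\otimes a+b\otimes b$ and applying Minkowski's inequality in $L^{q}(\nu)$ (legitimate since $q>1$, each term lying in $L^{q}(\nu)$ by \eqref{co9}), together with the symmetry $\|a\otimes b\|_{L^{q}(\nu)}=\|b\otimes a\|_{L^{q}(\nu)}$, yields
\[
\|a+b\|_{NL}^{2}\;\le\;\|a\|_{NL}^{2}+2\,\|a\otimes b\|_{L^{q}(\nu)}+\|b\|_{NL}^{2}.
\]
It then remains to show $\|a\otimes b\|_{L^{q}(\nu)}\le\|a\|_{NL}\|b\|_{NL}$; since $\|a\otimes b\|_{L^{q}(\nu)}^{q}=B(a^{q},b^{q})$, this is precisely the Cauchy--Schwarz inequality $B(f,g)\le B(f,f)^{1/2}B(g,g)^{1/2}$ applied to $f=a^{q}$, $g=b^{q}$, and $\|a+b\|_{NL}^{2}\le(\|a\|_{NL}+\|b\|_{NL})^{2}$ follows.

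The remaining ingredient — the positive semidefiniteness of $B$ on $L^{2N/(2N-\mu)}(\mathbb{R}^N)$, which yields its Cauchy--Schwarz inequality — I would establish from the subordination identity $r^{-\mu/2}=\Gamma(\mu/2)^{-1}\int_{0}^{\infty}t^{\mu/2-1}e^{-rt}\,dt$ with $r=|x-y|^{2}$. This exhibits
\[
B(f,g)=\frac{1}{\Gamma(\mu/2)}\int_{0}^{\infty}t^{\mu/2-1}\left(\int_{\mathbb{R}^N}\int_{\mathbb{R}^N}f(x)g(y)e^{-t|x-y|^{2}}\,dx\,dy\right)dt
\]
as an average, against the positive measure $t^{\mu/2-1}\,dt/\Gamma(\mu/2)$, of the Gaussian bilinear forms $(f,g)\mapsto\int_{\mathbb{R}^N}\int_{\mathbb{R}^N}f(x)g(y)e^{-t|x-y|^{2}}\,dx\,dy$; each of these is positive semidefinite because a Gaussian is a convolution square, so $\int_{\mathbb{R}^N}\int_{\mathbb{R}^N}f(x)g(y)e^{-t|x-y|^{2}}\,dx\,dy=c_{t}\int_{\mathbb{R}^N}(G_{t}*f)(G_{t}*g)$ for a suitable Gaussian $G_{t}$ and constant $c_{t}>0$. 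Hence $B$ is positive semidefinite and symmetric, which is all that is needed; one first checks this for $f,g$ in a dense subclass (say $C_c^{\infty}$) and passes to the limit using \eqref{co9}. I expect the only real obstacle to be that $q=2^*_{\mu}>1$, so that $(a+b)^{q}\neq a^{q}+b^{q}$ and the triangle inequality cannot be read off from sub-additivity of $B$ alone; the device of recasting $\|h\|_{NL}^{2}$ as the $L^{q}(\nu)$-norm of the tensor $h\otimes h$ and invoking Minkowski there is exactly what circumvents this.
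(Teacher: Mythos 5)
Your proof is correct. The paper does not actually prove this lemma — it is quoted from Gao and Yang \cite{yang} — so there is no argument in the text to compare against; your proof supplies the missing details and follows the standard route: reduce to nonnegative functions, view $\|h\|_{NL}^{2}$ as the $L^{2^*_{\mu}}$-norm of the tensor square $h\otimes h$ against the measure $|x-y|^{-\mu}\,dx\,dy$ so that Minkowski applies, and control the cross term by Cauchy--Schwarz for the bilinear form $B$, whose positive semidefiniteness you correctly obtain from the Gaussian subordination of the Riesz kernel (equivalently, from the positivity of its Fourier transform). All the integrability checks needed to justify Minkowski and Fubini/Tonelli are covered by the Hardy--Littlewood--Sobolev inequality \eqref{co9}, as you note.
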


\begin{Remark}
If we define \begin{align*}
S_{A}= \inf_{ u \in D^{1,2}(\mathbb{R}^N)\setminus \{0\}} \left\{  \int_{\mathbb{R}^N}|\nabla u|^2 dx:\;  \int_{\mathbb{R}^{N}} \int_{\mathbb{R}^N } \frac{|u_+(x)|^{2^{*}_{\mu}}|u_+(y)|^{2^{*}_{\mu}}}{|x-y|^{\mu}}~dxdy=1 \right\}
\end{align*}
then $S_A=S_{H,L}$.
\end{Remark}

\begin{Proposition}\label{prop1}
Let $u \in D_0^{1,2}(\Om)$ be an arbitrary solution of the problem
\begin{equation}\label{co35}
-\De u =     \left(\int_{\Om}\frac{|u_+(y)|^{2^*_{\mu}}}{|x-y|^{\mu}}dy\right)    |u_{+}|^{2^*_{\mu}-1} \text{ in } \Om,  \; u=0 \text{ on } \pa \Om\,.
\end{equation}
Then $$I(u)\geq \frac{1}{2}\left(\frac{N-\mu+2}{2N-\mu}\right) S_{H,L}^{\frac{2N-\mu}{N-\mu+2}} =: \beta\,.$$ Moreover, the same conclusion holds for the solution $u \in D^{1,2}(\mathbb{R}^N)$ of
\begin{align} \label{co34}
-\De u =  \left(\int_{\mathbb{R}^N}\frac{|u_+(y)|^{2^*_{\mu}}}{|x-y|^{\mu}}dy\right)        |u_{+}|^{2^*_{\mu}-1} \text{ in } \mathbb{R}^N.
\end{align}
\end{Proposition}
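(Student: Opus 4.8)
The plan is to deduce the estimate from two facts already available: the weak formulation of the equation tested against the solution $u$ itself, and the variational characterization $S_A=S_{H,L}$ recorded in the Remark above. After these, only an elementary computation with the critical exponent is needed.

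First I would dispose of the trivial solution, since $I(0)=0<\beta$ and hence the statement is understood for $u\not\equiv 0$. If $u\le 0$ a.e., the right-hand side of \eqref{co35} vanishes, so $u$ is harmonic on $\Om$ with zero boundary trace, forcing $u\equiv 0$; similarly a solution $u\in D^{1,2}(\mathbb{R}^N)$ of \eqref{co34} with $u\le 0$ is harmonic with finite Dirichlet energy, hence constant, hence $0$. Thus for a nontrivial solution $u_+\not\equiv 0$, and I may take $\phi=u$ in the definition of solution. Since $u_+\ge 0$ and $|u_+|^{2^*_\mu-1}u=u_+^{2^*_\mu}=|u_+|^{2^*_\mu}$ pointwise (both factors being supported in $\{u>0\}$), this gives the identity
\[
t:=\int_{\mathbb{R}^N}|\na u|^2\,dx=\int_{\mathbb{R}^N}\int_{\mathbb{R}^N}\frac{|u_+(x)|^{2^*_\mu}|u_+(y)|^{2^*_\mu}}{|x-y|^\mu}\,dx\,dy>0,
\]
where in the bounded case $u$ is extended by $0$ outside $\Om$ (so that $\int_\Om=\int_{\mathbb{R}^N}$ and $u$ becomes an admissible competitor for $S_A$).

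By the definition of $S_A$ and the identity $S_A=S_{H,L}$, one gets $S_{H,L}\le t\,/\,t^{1/2^*_\mu}=t^{(2^*_\mu-1)/2^*_\mu}$, that is, $t\ge S_{H,L}^{2^*_\mu/(2^*_\mu-1)}$. Since $2^*_\mu=\frac{2N-\mu}{N-2}$ and $2^*_\mu-1=\frac{N-\mu+2}{N-2}$, the exponent equals $\frac{2N-\mu}{N-\mu+2}$, so $t\ge S_{H,L}^{(2N-\mu)/(N-\mu+2)}$. Substituting $\int|\na u|^2=t=\int\int(\cdots)$ into the functional then yields
\[
I(u)=\tfrac{1}{2} t-\tfrac{1}{2\cdot 2^*_\mu}t=\tfrac{1}{2}\Big(1-\tfrac{1}{2^*_\mu}\Big)t=\tfrac{1}{2}\,\frac{N-\mu+2}{2N-\mu}\,t\ge \tfrac{1}{2}\,\frac{N-\mu+2}{2N-\mu}\,S_{H,L}^{(2N-\mu)/(N-\mu+2)}=\beta,
\]
and the same chain of relations with $\mathbb{R}^N$ in place of $\Om$ settles \eqref{co34}. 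I do not expect a genuine obstacle here: the only points that deserve a word are the legitimacy of choosing $\phi=u$ (immediate from the $D^{1,2}_0(\Om)$, resp. $D^{1,2}(\mathbb{R}^N)$, formulation, together with the Hardy--Littlewood--Sobolev inequality guaranteeing finiteness of the nonlocal term) and the exclusion of $u\equiv 0$.
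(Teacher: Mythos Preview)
Your argument is correct and follows essentially the same route as the paper: use the weak formulation to obtain the energy identity $\int|\nabla u|^2=\iint\frac{|u_+|^{2^*_\mu}|u_+|^{2^*_\mu}}{|x-y|^\mu}$, then invoke $S_A=S_{H,L}$ and the elementary exponent computation. The only cosmetic difference is that the paper tests the equation against $u_+$ and $u_-$ separately (yielding additionally $u_-\equiv 0$, hence $u\ge 0$), whereas you test against $u$ directly and use $|u_+|^{2^*_\mu-1}u=|u_+|^{2^*_\mu}$; either choice gives the same identity for $t$, so the two arguments are interchangeable.
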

\begin{proof}
If $u$ is a solution of \eqref{co35} then  testing \eqref{co35} with $u_+$ , $u_{-}$ yields
\begin{align*}
\int_{\Om}|\nabla u_+|^2dx=  \int_{\Om}\int_{\Om}\frac{|u_+(x)|^{2^{*}_{\mu}}|u_+(y)|^{2^{*}_{\mu}}}{|x-y|^{\mu}}~dxdy \text{ and } \int_{\Om}|\nabla u_{-}|^2dx= 0 \text{ a.e. on } \Om.
\end{align*}
It follows that $$(S_A)^{\frac{2^*_{\mu}}{2^*_{\mu}-1}}\leq \ds \int_{\Om}\int_{\Om}\frac{|u_+(x)|^{2^{*}_{\mu}}|u_+(y)|^{2^{*}_{\mu}}}{|x-y|^{\mu}}~dxdy=\int_{\Om}|\nabla u_+|^2dx=\int_{\Om}|\nabla u|^2dx. $$
It follows that
\begin{align*}
I(u)\geq \left(\frac{1}{2}-\frac{1}{2.2^*_{\mu}}\right)(S_A)^{\frac{2^*_{\mu}}{2^*_{\mu}-1}}= \frac{1}{2}\left(\frac{N-\mu+2}{2N-\mu}\right) S_{H,L}^{\frac{2N-\mu}{N-\mu+2}}
\end{align*}
The proof is now complete.\QED
\end{proof}
\begin{Lemma}\label{colem1}({\bf Pohozaev identity})
Let $N\geq 3$ and assume that $ u\in D^{1,2}_0(\mb R^N_+)$ solves
\begin{equation}\label{co12}
-\De u =      \left(\int_{\mathbb{R}^{N}_{+}}\frac{|u_+(y)|^{2^*_{\mu}}}{|x-y|^{\mu}}dy\right)        |u_{+}|^{2^*_{\mu}-1} \text{ in } \mathbb{R}^N_+.
\end{equation}
Then the following equality holds
\begin{equation*}
\frac{1}{2} \int_{\pa  \mathbb{R}^N_+ }(x-x_0)\cdot \nu |\na u|^2 dS + \frac{N-2}{2}\int_{\mathbb{R}^N_+}|\na u|^2 dx =\ds  \frac{2N-\mu}{2.2^*_{\mu}}\int_{\mathbb{R}^N_+}\int_{\mathbb{R}^N_+} \frac{|u_{+}(x)|^{2^*_{\mu}}|u_{+}(y)|^{2^*_{\mu}}}{|x-y|^{\mu}} dx dy
\end{equation*}
 where $\nu$ is the unit outward normal to $\pa \Om$ and $x_0=(0,0,\dots, 1)$.
\end{Lemma}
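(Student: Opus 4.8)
The plan is to carry out the classical Rellich--Pohozaev computation for \eqref{co12}: multiply the equation by the dilation field $(x-x_0)\cdot\na u$, integrate over $\mathbb{R}^N_+$, and integrate by parts on both sides; the only place where the argument leaves the local setting is the Choquard term, which I would evaluate using the homogeneity of the Riesz energy. Throughout I would use that a finite-energy solution $u$ of \eqref{co12} enjoys, by the regularity theory for critical Choquard equations, enough smoothness in $\mathbb{R}^N_+$ and up to the flat boundary, and that $u$ together with $\na u$ decays at infinity. To make the two integrations by parts below rigorous, one should first perform them after multiplying by a cut-off $\eta_R$ supported in $B_R$ and after excising an $\e$-tube around the diagonal $\{x=y\}$, and only then let $R\to\infty$ and $\e\to 0$.

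For the left-hand side, write $v:=(x-x_0)\cdot\na u$ and use $\De u\,v=\mathrm{div}\big((\na u)\,v\big)-\na u\cdot\na v$ together with the elementary identity $\na u\cdot\na v=|\na u|^2+(x-x_0)\cdot\na\big(\tfrac12|\na u|^2\big)$ and one further application of the divergence theorem to obtain
\[
-\int_{\mathbb{R}^N_+}\De u\,(x-x_0)\cdot\na u\;dx \;=\; -\frac12\int_{\pa\mathbb{R}^N_+}(x-x_0)\cdot\nu\,|\na u|^2\,dS \;-\;\frac{N-2}{2}\int_{\mathbb{R}^N_+}|\na u|^2\,dx ,
\]
where the boundary condition $u=0$ on $\pa\mathbb{R}^N_+$ has been used to write $\na u=(\pa_\nu u)\nu$ on $\pa\mathbb{R}^N_+$, so that the two boundary contributions coming from the integrations by parts collapse to a single term.

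For the right-hand side, set $F:=|u_+|^{2^*_\mu}$ and $f(x):=\int_{\mathbb{R}^N_+}|x-y|^{-\mu}F(y)\,dy$, and note that $|u_+|^{2^*_\mu-1}(x-x_0)\cdot\na u=\tfrac{1}{2^*_\mu}(x-x_0)\cdot\na F$ a.e.; hence after testing \eqref{co12} against $v$ the right-hand side equals $\tfrac{1}{2^*_\mu}\int_{\mathbb{R}^N_+}f(x)\,(x-x_0)\cdot\na F(x)\,dx$. To compute this last integral I would use that, extending $F$ by $0$ outside $\mathbb{R}^N_+$, the Riesz energy $J[F]:=\int_{\mathbb{R}^N}\int_{\mathbb{R}^N}|x-y|^{-\mu}F(x)F(y)\,dx\,dy$ (which coincides with the double integral appearing in the statement) satisfies $J\big[F(x_0+\la(\,\cdot\,-x_0))\big]=\la^{\mu-2N}J[F]$; differentiating this at $\la=1$ and recalling that $J$ is the quadratic form of the convolution operator with kernel $|\,\cdot\,|^{-\mu}$, so that $\tfrac{d}{d\la}J\big[F(x_0+\la(\,\cdot\,-x_0))\big]\big|_{\la=1}=2\int_{\mathbb{R}^N_+} f\,(x-x_0)\cdot\na F\,dx$, yields $\int_{\mathbb{R}^N_+}f(x)\,(x-x_0)\cdot\na F(x)\,dx=-\tfrac{2N-\mu}{2}J[F]$. (Equivalently, one integrates by parts in $x$ and in $y$ separately and adds, exploiting the pointwise identity $(x-x_0)\cdot\na_x|x-y|^{-\mu}+(y-x_0)\cdot\na_y|x-y|^{-\mu}=-\mu|x-y|^{-\mu}$ and the symmetry of the kernel.) Thus the right-hand side equals $-\tfrac{2N-\mu}{2\cdot 2^*_\mu}J[F]$; equating it with the expression obtained in the previous paragraph and multiplying through by $-1$ gives precisely the stated identity.

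The main obstacle is not the algebra but the justification of the two integrations by parts on the unbounded domain $\mathbb{R}^N_+$ and across the singularity of the Riesz kernel, i.e. showing that the cut-off and diagonal-excision errors vanish in the limit. This rests on the a priori regularity and decay estimates for finite-energy solutions of the critical Choquard problem, and is the step I would write out with the greatest care.
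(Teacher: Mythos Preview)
Your proposal is correct and follows essentially the same Rellich--Pohozaev strategy as the paper: multiply by $(x-x_0)\cdot\na u$, integrate by parts on the Laplacian side, and exploit the homogeneity of the Riesz kernel on the Choquard side. The paper makes the cut-off explicit by carrying a factor $\varphi_\la(x)=\varphi(\la x)$ (with $\varphi\equiv 1$ on $B_1$) through every computation and passing $\la\to 0$ via dominated convergence, whereas you only sketch this step; and for the regularity input the paper invokes its Lemma~\ref{colem2} (giving $u\in W^{2,s}_{\mathrm{loc}}$) rather than a generic appeal to Choquard regularity.

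The one genuine difference is in how the right-hand side is computed. The paper integrates by parts directly, picks up the term $-\mu\int\!\!\int |u_+(y)|^{2^*_\mu}|u_+(x)|^{2^*_\mu}(x-x_0)\cdot(x-y)|x-y|^{-\mu-2}\,dx\,dy$, and then symmetrizes by interchanging $x\leftrightarrow y$ and adding, which is precisely your parenthetical ``equivalently'' remark. Your primary route---differentiating the scaling identity $J[F(x_0+\la(\cdot-x_0))]=\la^{\mu-2N}J[F]$ at $\la=1$---is a cleaner packaging of the same algebra and avoids writing out the intermediate singular term. One small point: the diagonal-excision you propose is not actually needed, since the Hardy--Littlewood--Sobolev inequality already guarantees all the double integrals are absolutely convergent; the paper accordingly never excises the diagonal.
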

\begin{proof}
First observe that any solution of problem \eqref{co12} is non-negative. This implies \begin{align*}
\na u=\na u^+\quad \mbox{a.e. on}\ \mathbb{R}^N_+.
\end{align*}
Extending $u=0 $ in $\mathbb{R}^N\setminus \mathbb{R}^N_+$  we have  $u \in W^{2,2}_{\text{loc}}(\mathbb{R}^N)$ (see Lemma \ref{colem2}). Now fix  $\varphi \in C^1_c(\mathbb{R}^N)$ such that $\varphi=1 $ on $B_1$. Let the function $ \varphi_{\la} \in D^{1,2}(\mathbb{R}^N) $ defined  for $\la \in (0, \infty)$ and $x\in \mathbb{R}^N$ by
$\varphi_{\la}(x)=\varphi(\la x)$. Multiplying \eqref{co12} with $((x-x_0)\cdot\na u)\varphi_{\la}$ and integrating over $\mathbb{R}^N_+$, we obtain
\begin{equation}
\begin{aligned}\label{co7}
\int_{\mathbb{R}^N_+}(-\De u)&((x-x_0)\cdot\na u)\varphi_{\la}(x) dx = \int_{\mathbb{R}^N_+} \left(\int_{\mathbb{R}^{N}_{+}}\frac{|u_+(y)|^{2^*_{\mu}}}{|x-y|^{\mu}}dy\right) |u_{+}|^{2^*_{\mu}-1} ((x-x_0)\cdot\na u)\varphi_{\la}dx
 \\ =& \int_{\mathbb{R}^N_+}\na \left((x-x_0)\int_{\mathbb{R}^N_+}\left(\frac{|u_{+}(y)|^{2^*_{\mu}}}{|x-y|^{\mu}} dy \right) |u_{+}(x)|^{2^*_{\mu}-1}\varphi_{\la}(x)u(x)\right) dx \\
 &  - \int_{\mathbb{R}^N_+}u(x) \na \left((x-x_0)\int_{\mathbb{R}^N_+}\left(\frac{|u_{+}(y)|^{2^*_{\mu}}}{|x-y|^{\mu}} dy \right) |u_{+}(x)|^{2^*_{\mu}-1}\varphi_{\la}(x)\right) dx
\end{aligned}
\end{equation}
Using the  divergence theorem on the  right-hand side of \eqref{co7}, we obtain
\begin{equation}\label{co2}
\begin{aligned}
 \int_{\mathbb{R}^N_+}(-\De u)&((x-x_0)\cdot\na u)\varphi_{\la}(x) dx =  \int_{\mathbb{R}^N_+} \left(\int_{\mathbb{R}^{N}_{+}}\frac{|u_+(y)|^{2^*_{\mu}}}{|x-y|^{\mu}}dy\right) |u_{+}|^{2^*_{\mu}-1} ((x-x_0)\cdot\na u)\varphi_{\la}dx  \\
 =&  -  \int_{\mathbb{R}^N_+}u(x) \na \left((x-x_0)\int_{\mathbb{R}^N_+}\left(\frac{|u_{+}(y)|^{2^*_{\mu}}}{|x-y|^{\mu}} dy \right) |u_{+}(x)|^{2^*_{\mu}-1}\varphi_{\la}(x)\right) dx.
\end{aligned}
\end{equation}
\noi Now consider the  integral
\begin{equation}\label{co21}
\begin{aligned}
 \int_{\mathbb{R}^N_+}u(x)& \na \left((x-x_0)\int_{\mathbb{R}^N_+}\left(\frac{|u_{+}(y)|^{2^*_{\mu}}}{|x-y|^{\mu}} dy \right) |u_{+}(x)|^{2^*_{\mu}-1}\varphi_{\la}(x)\right) dx\\
  =& \int_{\mathbb{R}^N_+}N u(x)
  \left(\int_{\mathbb{R}^N_+}\frac{|u_{+}(y)|^{2^*_{\mu}}}{|x-y|^{\mu}} dy \right) |u_{+}(x)|^{2^*_{\mu}-1}\varphi_{\la}(x) dx\\&  \quad + \int_{\mathbb{R}^N_+}(2^*_{\mu}-1) u(x) \left(\int_{\mathbb{R}^N_+}\frac{|u_{+}(y)|^{2^*_{\mu}}}{|x-y|^{\mu}} dy \right) |u_{+}(x)|^{2^*_{\mu}-2}\varphi_{\la}(x)(\na u\cdot (x-x_0)) dx \\& \quad -\mu \int_{\mathbb{R}^N_+} u(x)\varphi_{\la}(x)\left(\int_{\mathbb{R}^N_+}\frac{|u_{+}(y)|^{2^*_{\mu}}(x-x_0)\cdot (x-y)}{|x-y|^{\mu +2}} dy\right)  |u_{+}(x)|^{2^*_{\mu}-1} dx\\&\quad + \la \int_{\mathbb{R}^N_+}\int_{\mathbb{R}^N_+} \frac{|u_{+}(y)|^{2^*_{\mu}}|u_{+}(x)|^{2^*_{\mu}}}{|x-y|^{\mu}}   (x-x_0)\cdot \na \varphi(\la x) ~dx dy .
\end{aligned}
\end{equation}
Taking into account  \eqref{co2} and \eqref{co21},   we have
\begin{equation}\label{co3}
\begin{aligned}
 2^*_{\mu} \int_{\mathbb{R}^N_+}&(x-x_0)\cdot\na u(x) \left( \int_{\mathbb{R}^N_+}\frac{|u_{+}(y)|^{2^*_{\mu}}}{|x-y|^{\mu}} dy \right) |u_{+}(x)|^{2^*_{\mu}-1}\varphi_{\la}(x)dx \\
  =& - N\int_{\mathbb{R}^N_+} u(x)\left(\int_{\mathbb{R}^N_+}\frac{|u_{+}(y)|^{2^*_{\mu}}}{|x-y|^{\mu}} dy \right) |u_{+}(x)|^{2^*_{\mu}-1}\varphi_{\la}(x) dx \\& \quad +\mu \int_{\mathbb{R}^N_+} u(x)\varphi_{\la}(x)\left(\int_{\mathbb{R}^N_+}\frac{|u_{+}(y)|^{2^*_{\mu}}(x-x_0).(x-y)}{|x-y|^{\mu +2}} dy\right)  |u_{+}(x)|^{2^*_{\mu}-1} dx\\&
  \quad - \la \int_{\mathbb{R}^N_+}\int_{\mathbb{R}^N_+} \frac{|u_{+}(y)|^{2^*_{\mu}}|u_{+}(x)|^{2^*_{\mu}}}{|x-y|^{\mu}}   (x-x_0)\cdot\na \varphi(\la x) ~dx dy .
\end{aligned}
\end{equation}
Now, interchanging the role of $x$ and $y$ in \eqref{co3} and combining the resultant equation with \eqref{co3}, we deduce that
\begin{equation}
\begin{aligned}
   \int_{\mathbb{R}^N_+}(x-x_0)\cdot\na u(x)& \int_{\mathbb{R}^N_+}\left(\frac{|u_{+}(y)|^{2^*_{\mu}}}{|x-y|^{\mu}} dy \right) |u_{+}(x)|^{2^*_{\mu}-1}\varphi_{\la}(x)dx \\=& \frac{\mu-2N}{2.2^*_{\mu}}\int_{\mathbb{R}^N_+}\int_{\mathbb{R}^N_+} \frac{|u_{+}(y)|^{2^*_{\mu}}|u_{+}(x)|^{2^*_{\mu}}}{|x-y|^{\mu}}  \varphi_{\la}(x) dx dy \\& - \frac{\la}{2^*_{\mu}} \int_{\mathbb{R}^N_+}\int_{\mathbb{R}^N_+} \frac{|u_{+}(y)|^{2^*_{\mu}}|u_{+}(x)|^{2^*_{\mu}}}{|x-y|^{\mu}}   (x-x_0)\cdot\na \varphi(\la x) ~ dx dy .
\end{aligned}
\end{equation}
Passing to the limit as $\la \ra 0 $ and using the dominated convergence theorem, we obtain that
\begin{equation}\label{co5}
\begin{aligned}
   \int_{\mathbb{R}^N_+}(x-x_0)\cdot\na u(x)& \left(\int_{\mathbb{R}^N_+}\frac{|u_{+}(y)|^{2^*_{\mu}}}{|x-y|^{\mu}} dy \right) |u_{+}(x)|^{2^*_{\mu}-1}dx \\&= \frac{\mu-2N}{2.2^*_{\mu}}\int_{\mathbb{R}^N_+}\int_{\mathbb{R}^N_+} \frac{|u_{+}(y)|^{2^*_{\mu}}|u_{+}(x)|^{2^*_{\mu}}}{|x-y|^{\mu}}  ~ dx dy .
\end{aligned}
\end{equation}
It is easily seen that
\begin{equation*}
\begin{aligned}
&\De u ((x-x_0)\cdot\na u)\varphi_{\la} \\ &=  \text{div} \left(\na u \varphi_{\la} \;(x-x_0)\cdot \na u \right)-\varphi_{\la}|\na u|^2-\varphi_{\la}(x-x_0)\cdot\na \left( \frac{|\na u|^2}{2}\right)-\la ((x-x_0)\cdot\na u)(\na \varphi(\la x)\cdot \na u)\\& =\ds \text{div} \left( \left(\na u (x-x_0)\cdot \na u  - (x-x_0) \frac{|\na u|^2}{2}\right) \varphi_{\la}\right)
 + \frac{N-2}{2} \varphi_{\la}|\na u|^2 \\&  \quad + \la \frac{|\na u|^2}{2}((x-x_0)\cdot\na \varphi(\la x)) -\la ((x-x_0)\cdot\na u)(\na \varphi(\la x)\cdot \na u).
\end{aligned}
\end{equation*}
Now, integrating by parts we obtain
\begin{equation*}
\begin{aligned}
\int_{\mathbb{R}^N_+} (\De u)((x-x_0)\cdot\na u)\varphi_{\la}\; dx& =  \int_{\pa \mathbb{R}^N_+}\left(\na u (x-x_0)\cdot \na u  - (x-x_0)  \frac{|\na u|^2}{2}\right) \varphi_{\la}\cdot\nu \; dS \\& \quad +\frac{N-2}{2} \int_{\mathbb{R}^N_+} \varphi_{\la}|\na u|^2 dx - \int_{\mathbb{R}^N_+}\la \frac{|\na u|^2}{2}((x-x_0)\cdot\na \varphi(\la x)) dx  \\& \quad -   \int_{\mathbb{R}^N_+} \la ((x-x_0)\cdot\na u)(\na \varphi(\la x)\cdot \na u) dx .
\end{aligned}
\end{equation*}
Noticing that  $\na u = (\na u\cdot \nu) \nu $ on $\pa \mathbb{R}^N_+$ and  employing   dominated convergence theorem  for $\la \ra 0$, we get that
\begin{equation}\label{co6}
\begin{aligned}
\int_{\mathbb{R}^N_+} (\De u)((x-x_0)\cdot\na u)= \frac{1}{2} \int_{\pa \mathbb{R}^N_+}|\na u|^2  (x-x_0)\cdot\nu \; dS  +\frac{N-2}{2} \int_{\mathbb{R}^N_+} |\na u|^2 dx  .
\end{aligned}
\end{equation}
From equation \eqref{co7}, \eqref{co5} and \eqref{co6} we have our desired result.
\QED
\end{proof}

We can now deduce the following Liouville-type theorem.
\begin{Theorem}\label{cothm2}
Let $N\geq 3, u\in D^{1,2}_0(\mb R^N_+)$  be any solution of
\begin{equation}\label{co1}
-\De u =   \left(\int_{\mathbb{R}^{N}_{+}}\frac{|u_+(y)|^{2^*_{\mu}}}{|x-y|^{\mu}}dy\right)     |u_{+}|^{2^*_{\mu}-1} \text{ in } \mathbb{R}^N_+.
\end{equation}
Then $u\equiv 0$ on $\mathbb{R}^N_+$.
\end{Theorem}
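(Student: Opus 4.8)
The plan is to derive a contradiction from the Pohozaev identity established in Lemma~\ref{colem1}, exactly in the spirit of Coron's original non-existence argument on half-spaces. First I would note that any solution $u\in D^{1,2}_0(\mathbb{R}^N_+)$ of \eqref{co1} is non-negative (as already observed in the proof of Lemma~\ref{colem1}), so that $u_+=u$ and, testing the equation with $u$ itself,
\begin{equation*}
\int_{\mathbb{R}^N_+}|\nabla u|^2\,dx=\int_{\mathbb{R}^N_+}\int_{\mathbb{R}^N_+}\frac{|u(x)|^{2^*_{\mu}}|u(y)|^{2^*_{\mu}}}{|x-y|^{\mu}}\,dx\,dy=:A.
\end{equation*}
Substituting this into the right-hand side of the Pohozaev identity, the two bulk terms become
\begin{equation*}
\frac{N-2}{2}A \quad\text{and}\quad \frac{2N-\mu}{2\cdot 2^*_{\mu}}A=\frac{N-2}{2}A,
\end{equation*}
using $2^*_{\mu}=(2N-\mu)/(N-2)$, and hence they cancel. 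The Pohozaev identity therefore collapses to the boundary statement
\begin{equation*}
\frac{1}{2}\int_{\partial\mathbb{R}^N_+}(x-x_0)\cdot\nu\,|\nabla u|^2\,dS=0.
\end{equation*}

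Next I would exploit the geometry of the half-space. On $\partial\mathbb{R}^N_+=\{x_N=0\}$ the outward unit normal is $\nu=(0,\dots,0,-1)$, and with $x_0=(0,\dots,0,1)$ one computes $(x-x_0)\cdot\nu=(0-1)\cdot(-1)=1$ for every boundary point, so the integrand is simply $|\nabla u|^2$. Thus the identity forces
\begin{equation*}
\int_{\partial\mathbb{R}^N_+}|\nabla u|^2\,dS=0,
\end{equation*}
which means $\nabla u=0$ (equivalently $\partial u/\partial x_N=0$, since $u\equiv 0$ on the hyperplane already kills the tangential derivatives) on the whole boundary $\{x_N=0\}$. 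Combined with the Dirichlet condition $u=0$ on $\partial\mathbb{R}^N_+$, this gives Cauchy data $u=0$, $\partial_\nu u=0$ on the hyperplane for the elliptic equation \eqref{co1}.

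Finally I would conclude by a unique-continuation / reflection argument: extend $u$ by odd reflection across $\{x_N=0\}$ to all of $\mathbb{R}^N$; because the Cauchy data vanish and the nonlinearity on the right of \eqref{co1} extends consistently (the right-hand side is supported in $\overline{\mathbb{R}^N_+}$ and the reflected function solves $-\Delta \tilde u=0$ in a neighborhood of the hyperplane where $u$ and its normal derivative vanish), $u$ extends to a solution of an elliptic equation vanishing to infinite order on an open set, so Aronszajn's unique continuation principle yields $u\equiv 0$ near $\{x_N=0\}$ and then, by connectedness, on all of $\mathbb{R}^N_+$. Alternatively, and more elementarily, once $A=\int|\nabla u|^2=0$ we are already done, since that alone forces $u$ to be constant, hence zero by the boundary condition. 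The main obstacle is making the Pohozaev computation rigorous in the non-smooth setting—justifying the regularity $u\in W^{2,2}_{\mathrm{loc}}$ (invoked via Lemma~\ref{colem2}) and the boundary integrals when $\mu$ is large enough that the energy functional fails to be $C^2$—but for the statement of Theorem~\ref{cothm2} itself, Lemma~\ref{colem1} already packages that difficulty, so the deduction above is short.
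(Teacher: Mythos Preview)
Your derivation of the vanishing boundary integral is exactly what the paper does: combine the Pohozaev identity of Lemma~\ref{colem1} with the relation $\int_{\mathbb{R}^N_+}|\nabla u|^2=A$ (obtained by testing with $u$) and the explicit value $(x-x_0)\cdot\nu=1$ on $\partial\mathbb{R}^N_+$. Where you diverge is in the endgame. The paper argues by contradiction via the Hopf boundary point lemma: if $u\not\equiv 0$, then $u>0$ in $\mathbb{R}^N_+$ by the strong maximum principle (using the $W^{2,s}_{\mathrm{loc}}$ regularity of Lemma~\ref{colem2}), and Hopf forces $\partial u/\partial\nu<0$ on the boundary, so $|\nabla u|^2>0$ there, contradicting the vanishing of the boundary integral. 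This is shorter and more robust than your unique continuation route; in particular, Aronszajn-type results require an inequality of the form $|\Delta u|\le C|u|$ near the zero set, and when $\mu>4$ one has $2^*_\mu-1<1$, so $|u|^{2^*_\mu-1}$ dominates $|u|$ near zeros and the standard hypothesis is not immediate. Your reflection/unique continuation sketch can probably be made to work, but it needs more care than you indicate.

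One genuine slip: your ``alternative'' at the end is a non-sequitur. You have \emph{not} shown $A=\int|\nabla u|^2=0$; the cancellation of the two bulk terms in the Pohozaev identity only yields that the \emph{boundary} integral vanishes, and says nothing about $A$ itself. Drop that sentence, or replace your conclusion with the Hopf argument.
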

\begin{proof}
If $u$ is a solution of \eqref{co1} then
\begin{equation*}
\int_{\mathbb{R}^N_+}\na u\cdot \na \phi\;  dx - \int_{\mathbb{R}^N_+}\int_{\mathbb{R}^N_+} \frac{|u_{+}(x)|^{2^*_{\mu}}|u_{+}(y)|^{2^*_{\mu}-1}\phi(y)}{|x-y|^{\mu}}   \; dx\;  dy  \qquad \text{ for all } \phi \in D^{1,2}_0(\mb R^N_+).
\end{equation*}
Taking $\phi=u_{-}$ we obtain $u_{-}=0$ a.e. on $\mathbb{R}^N$. This implies that $u$ is a non-negative solution of \eqref{co1}. Now, by Lemma \ref{colem1} we have
\begin{equation*}
\int_{\pa \mathbb{R}^N_+}|\na u|^2  (x-x_0)\cdot\nu \; dS =0.
\end{equation*}
  But $(x-x_0)\cdot\nu >0$ for $x \in \pa \mathbb{R}^N_+ $. Since $u$ is a non-trivial solution, we get a contradiction from the Hopf boundary point lemma. Hence, $u\equiv0$ on $\mathbb{R}^N_+$.\QED
\end{proof}

\section{Classification of solutions  }
In this section we will discuss the regularity and  classification of  non-negative solutions of the following equation: 
\begin{align}\label{co32}
-\De u =\left(|x|^{\mu-N}*|u|^{p}\right) |u|^{p-2}u  \text{ in } \mathbb{R}^N,
\end{align}
where $p:= \frac{N+\mu}{N-2}$ and $0<\mu <N$.
Consider the following integral system of equations:
\begin{equation}\label{co39}
\begin{aligned}
& u(x)=\int_{\mathbb{R}^N}\frac{u^{p-1}(y)v(y)}{|x-y|^{N-2}}~dy,  u\geq 0 \text{ in } \mathbb{R}^N \\
& v(x)=\int_{\mathbb{R}^N}\frac{u^p(y)}{|x-y|^{N-\mu}}~dy,  v\geq 0 \text{ in } \mathbb{R}^N.
\end{aligned}
\end{equation}	
 We note that if $u\in D^{1,2}(\mathbb{R}^N)$, then $u,v$ defined above  is in $L^{\frac{2N}{N-2}}(\mathbb{R}^N) \times L^{\frac{2N}{N-\mu}}(\mathbb{R}^N)$. First we will discuss the  regularity of non-negative solutions of \eqref{co32}. In this regard, we will prove the following Lemma:
\begin{Lemma}\label{colem2}
	Let $u \in D^{1,2}(\mathbb{R}^N)$ be a non-negative solutions of \eqref{co32} then  $u \in W^{2,s}_{\text{loc}}(\mathbb{R}^N)$ for all $1 \leq s<\infty$.
\end{Lemma}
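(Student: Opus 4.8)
Set $V_u:=|x|^{\mu-N}*|u|^{p}$; since $u\ge0$, equation \eqref{co32} reads $-\De u=V_uu^{p-1}$ with $V_u\ge0$, and by the Hardy--Littlewood--Sobolev inequality $u\in L^{2^*}(\mathbb{R}^N)$ forces $u^p\in L^{2N/(N+\mu)}(\mathbb{R}^N)$, hence $V_u\in L^{2N/(N-\mu)}(\mathbb{R}^N)$. By the Calder\'on--Zygmund estimates applied to $-\De u=V_uu^{p-1}$ it suffices to show that the right-hand side lies in $L^{s}_{\text{loc}}(\mathbb{R}^N)$ for every $s<\infty$, and for this it is enough to prove the single claim that $u\in L^{q}_{\text{loc}}(\mathbb{R}^N)$ for every $q<\infty$. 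Indeed, granting this claim, $u^p\in L^{r}_{\text{loc}}$ for every $r<\infty$; splitting $V_u=|x|^{\mu-N}*(u^p\chi_{B_R})+|x|^{\mu-N}*(u^p\chi_{B_R^c})$, the first term is the Riesz potential of a compactly supported function lying in $L^{r}$ with $r>N/\mu$, hence bounded (indeed locally H\"older continuous) on $B_R$, while the second term is bounded on $B_{R/2}$ because $u^p\in L^{2N/(N+\mu)}(\mathbb{R}^N)$ makes $\int_{B_R^c}|x-y|^{\mu-N}u^p(y)\,dy$ finite uniformly for $x\in B_{R/2}$; thus $V_u\in L^{\infty}_{\text{loc}}$. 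Since also $u^{p-1}\in L^{s}_{\text{loc}}$ for every $s<\infty$ (again from the claim, as $p-1>0$), the product $V_uu^{p-1}$ lies in $L^{s}_{\text{loc}}$ for all $s$, and Calder\'on--Zygmund closes the argument (yielding moreover $u\in C^{1,\gamma}_{\text{loc}}$).

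The whole content is therefore the integrability bootstrap for $u$, started from $u\in L^{2^*}_{\text{loc}}$. I would run a Brezis--Kato/regularity-lifting argument on the equation, equivalently on the integral system \eqref{co39}, localizing and testing with truncated powers $\eta^{2}u\,u_L^{2\beta}$ (where $u_L=\min(u,L)$ and $\eta$ is a cut-off), and controlling the nonlocal right-hand side by the \emph{double} Hardy--Littlewood--Sobolev inequality rather than by a pointwise bound on $V_u$. The clean case is $p\ge2$ (i.e.\ $\mu\ge N-4$): then $-\De u=a(x)u$ with $a:=V_uu^{p-2}$, and a computation of exponents shows $a\in L^{N/2}_{\text{loc}}$, since $\tfrac{N-\mu}{2N}+\tfrac{(p-2)(N-2)}{2N}=\tfrac2N$ (the borderline integrability), so the classical Brezis--Kato lemma---after truncating $a$ to its large values to produce a small $L^{N/2}$ piece which is absorbed into the left-hand side---already gives $u\in L^{q}_{\text{loc}}$ for all $q<\infty$. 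When $p<2$ (the regime in which the energy functional of $(P)$ fails to be $C^{2}$) one first notes that the solution is non-negative and superharmonic, $-\De u\ge0$; by the strong minimum principle/weak Harnack inequality either $u\equiv0$, and there is nothing to prove, or $u>0$ and is locally bounded below by a positive constant, which tames the negative power $u^{p-2}$. A parallel but more delicate truncated-test-function estimate then yields a first gain of integrability strictly above $2^*$, after which the iteration $\tfrac1{q_{n+1}}=\tfrac{2p-1}{q_n}-\tfrac{\mu+2}{N}$---expanding at its fixed point $q=2^*$ because $2p-1>1$---drives $q_n\to\infty$ and gives $u\in L^{q}_{\text{loc}}$ for all $q<\infty$.

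The main obstacle is precisely this Brezis--Kato step: the nonlocal factor $V_u$ must be carried through the Moser iteration without destroying the absorption argument, which is why one estimates the double convolution integral directly by Hardy--Littlewood--Sobolev, and why the borderline $L^{N/2}$ integrability of $a=V_uu^{p-2}$ when $p\ge2$---and the superharmonicity of $u$ when $p<2$---are exactly the ingredients making the first integrability improvement possible. Everything afterwards (the Riesz-potential mapping properties, the near/far splitting of the convolution, and the final bootstrap to every $W^{2,s}_{\text{loc}}$) is routine.
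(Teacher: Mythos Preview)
Your proposal is correct and takes a genuinely different route from the paper.

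The paper's proof is essentially a one-line citation: it invokes \cite[Lemma~3.1]{huang}, a regularity-lifting result for the integral system \eqref{co39} based on the Chen--Li--Ou contraction-mapping technique, to conclude directly that $(u,v)\in L^{r}(\mathbb{R}^{N})\times L^{s}(\mathbb{R}^{N})$ for \emph{all} $1<r,s<\infty$ globally. From there $u^{p}\in L^{N/\mu}(\mathbb{R}^{N})$, so the Riesz potential $V_u=|x|^{\mu-N}*u^{p}$ lies in $L^{\infty}(\mathbb{R}^{N})$ globally, whence $|{-}\Delta u|\le C|u|^{p-1}$ with $p-1$ subcritical, and standard Calder\'on--Zygmund bootstrapping finishes.

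You instead work on the PDE side with a Brezis--Kato/Moser iteration to obtain $u\in L^{q}_{\text{loc}}$ for every $q<\infty$, then use the near/far splitting of the convolution to get $V_u\in L^{\infty}_{\text{loc}}$, and close by the same elliptic bootstrap. This is more self-contained (no external integral-system lemma) but yields only local integrability---which is all the lemma asks. Your case split at $p=2$ and the exponent bookkeeping ($a=V_u u^{p-2}\in L^{N/2}_{\text{loc}}$ exactly, fixed point of the iteration at $q=2^*$) are correct. For $p<2$ the weak-Harnack lower bound is a legitimate way to tame $u^{p-2}$, though note it does not by itself place $a$ in $L^{N/2}_{\text{loc}}$ (since $V_u$ alone is only in $L^{2N/(N-\mu)}$ with $2N/(N-\mu)<N/2$ in this regime); the first integrability gain off the fixed point really does require the nonlocal Brezis--Kato test-function estimate with the double Hardy--Littlewood--Sobolev inequality, which you acknowledge but only sketch. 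What each approach buys: the paper's gets global integrability with no work but an external dependency; yours is self-contained and isolates precisely where the critical-exponent difficulty sits.
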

\begin{proof}
	Let $u\in D^{1,2}(\mathbb{R}^N)$ be a non-negative solution of \eqref{co32}  
	Now following the same approach  as in proof of  \cite[Lemma 3.1]{huang}, we have $ (u,v) \in L^r(\mathbb{R}^N)\times L^s(\mathbb{R}^N)$ for all $1<r, s<\infty$. In particular, $u^{p} \in L ^{\frac{N}{\mu}}(\mathbb{R}^N)$, and now using the boundedness of  Riesz potential  operator, we have  $|x|^{\mu-N}*u^{p} \in L^{\infty}(\mathbb{R}^N) $. Thus, from \eqref{co32}, we have
	\begin{align*}
	|-\Delta u| \leq C|u|^{p-1}.
	\end{align*}
	By classical elliptic regularity theory for subcritical problems in local bounded domains, we have $u \in W^{2,s}_{\text{loc}}(\mathbb{R}^N)$ for any $1\leq s < \infty$.  \QED
	\end{proof}

Next, we will discuss the classification of all positive solutions of the following system of integral equations:
\begin{equation}\label{co33}
\begin{aligned}
& u(x)=\int_{\mathbb{R}^N}\frac{u^{a}(y)v^b(y)}{|x-y|^{N-\al}}~dy , u>0 \text{ in } \mathbb{R}^N, \\
& v(x)=\int_{\mathbb{R}^N}\frac{u^c(y)v^d(y)}{|x-y|^{N-\beta}}~dy, v>0 \text{ in } \mathbb{R}^N,
\end{aligned}
\end{equation}
where $a\geq 0,\; b,c,d \in \{0\} \cup[1,\infty)$, $0<\al, \beta<N$.

Let $(u,v)\in L^{q_1}(\mathbb{R}^N)\times L^{q_2}(\mathbb{R}^N)$ be a solution of \eqref{co33}.  Now for all $\la \in \mathbb{R}$, we define $T_\la:= \{ (x_1, x_2,\cdots,x_n)\in \mathbb{R}^N : x_1 =\la  \}$  as the moving plane. Let $x^\la:= (2\la-x_1,x_2,\cdots,x_n), \; \Sigma_\la:= \{ (x_1, x_2,\cdots,x_n)\in \mathbb{R}^N : x_1 < \la   \}$  and $\Sigma_\la^\prime := \{ (x_1, x_2,\cdots,x_n)\in \mathbb{R}^N : x_1 \geq  \la   \}$ be the reflection of $\Sigma_\la$ about the plane $T_\la$. Moreover, define  $u_\la(y):= u(y^\la), \; v_\la(y)= v(y^\la)$. Immediately, we have the following property whose proof is just an elementary computation.
\begin{Lemma}\label{colem6}
	Assume that $(u,v)$ is a positive pair of solution of \eqref{co33}. Then
	\begin{equation*}
	\begin{aligned}
	& u(y^\la)-u(y)= \int_{\Sigma_\la} \left(  \frac{1}{|y-x|^{N-\al}}- \frac{1}{|y^\al-x|^{N-\al}}   \right)\left[u^a(x^\la)v^b(x^\la) - u^a(x)v^b(x)\right]~dx,\\
	& v(y^\la)-v(y)= \int_{\Sigma_\la} \left(  \frac{1}{|y-x|^{N-\ba}}- \frac{1}{|y^\al-x|^{N-\ba}}   \right)\left[u^c(x^\la)v^d(x^\la) - u^c(x)v^d(x)\right]~dx.
	\end{aligned}
	\end{equation*}
\end{Lemma}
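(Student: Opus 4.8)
The plan is to exploit the symmetry of the Riesz kernels $|y-x|^{\al-N}$ and $|y-x|^{\ba-N}$ under the reflection $x\mapsto x^\la$ across the hyperplane $T_\la$. First I would split the domain of integration in the first equation of \eqref{co33} as $\mathbb{R}^N=\Sigma_\la\cup\Sigma_\la'$ (the hyperplane $T_\la$ being Lebesgue--negligible), writing
\[
u(y)=\int_{\Sigma_\la}\frac{u^a(x)v^b(x)}{|y-x|^{N-\al}}\,dx+\int_{\Sigma_\la'}\frac{u^a(x)v^b(x)}{|y-x|^{N-\al}}\,dx.
\]
In the second integral I perform the change of variables $x\mapsto x^\la$, which is a measure-preserving involution mapping $\Sigma_\la'$ onto $\Sigma_\la$; since the reflection about $T_\la$ is an isometry, $|y-x^\la|=|y^\la-x|$, so this integral becomes $\int_{\Sigma_\la}\frac{u^a(x^\la)v^b(x^\la)}{|y^\la-x|^{N-\al}}\,dx$. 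Hence
\[
u(y)=\int_{\Sigma_\la}\left(\frac{u^a(x)v^b(x)}{|y-x|^{N-\al}}+\frac{u^a(x^\la)v^b(x^\la)}{|y^\la-x|^{N-\al}}\right)dx.
\]

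Next I would replace $y$ by $y^\la$ in this identity. Because $(y^\la)^\la=y$ and the reflection is an isometry, the quantities $|y^\la-x|$ and $|y-x|$ simply exchange roles, giving
\[
u(y^\la)=\int_{\Sigma_\la}\left(\frac{u^a(x)v^b(x)}{|y^\la-x|^{N-\al}}+\frac{u^a(x^\la)v^b(x^\la)}{|y-x|^{N-\al}}\right)dx.
\]
Subtracting the two displays and regrouping the four terms into those carrying the common factor $\bigl[u^a(x^\la)v^b(x^\la)-u^a(x)v^b(x)\bigr]$ produces precisely the first claimed identity. The second identity follows verbatim by running the same argument on the second equation of \eqref{co33}, with $\al$ replaced by $\ba$ and the exponents $(a,b)$ replaced by $(c,d)$.

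The only delicate point is the integrability bookkeeping: since $(u,v)\in L^{q_1}(\mathbb{R}^N)\times L^{q_2}(\mathbb{R}^N)$ and the equations in \eqref{co33} are assumed to hold pointwise, the maps $x\mapsto u^a(x)v^b(x)|y-x|^{\al-N}$ and $x\mapsto u^c(x)v^d(x)|y-x|^{\ba-N}$ are integrable over $\mathbb{R}^N$ for every $y$ for which \eqref{co33} makes sense, so the splitting of the domain, the substitution, and the final recombination are all legitimate and never involve cancelling an infinite quantity. Apart from that observation the proof is the purely algebraic manipulation above, so I do not expect any genuine obstacle and no step should be hard.
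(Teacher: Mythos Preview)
Your argument is correct and is exactly the elementary computation the paper has in mind; the paper gives no proof beyond remarking that the identity follows from a direct calculation, and your splitting $\mathbb{R}^N=\Sigma_\la\cup\Sigma_\la'$, the change of variable $x\mapsto x^\la$ using $|y-x^\la|=|y^\la-x|$, and the subsequent subtraction carry it out in full.
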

\begin{Lemma}\label{colem7}
	There exists $\eta>0$ such that for all $\la<-\eta$,
	\begin{align*}
	u(y^\la)\geq u(y), \quad v(y^\la)\geq v(y), \text{ for all } y \in \Sigma_\la.
	\end{align*}
\end{Lemma}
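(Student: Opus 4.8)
The plan is to run the method of moving planes ``from $-\infty$'': show that the subsets of $\Sigma_\la$ on which the two inequalities fail carry an arbitrarily small Lebesgue norm once $\la$ is negative enough, and then absorb. Put
\begin{align*}
\Sigma_\la^u := \{ y \in \Sigma_\la : u(y) > u(y^\la) \}, \qquad \Sigma_\la^v := \{ y \in \Sigma_\la : v(y) > v(y^\la) \};
\end{align*}
the goal is to prove $|\Sigma_\la^u| = |\Sigma_\la^v| = 0$ for all $\la$ sufficiently negative (continuity of $u,v$ then upgrades the conclusion to every $y\in\Sigma_\la$). First I would record the elementary fact that $|y-x| < |y^\la - x|$ whenever $x,y\in\Sigma_\la$, so the two kernels appearing in Lemma \ref{colem6} satisfy
\begin{align*}
0 \leq \frac{1}{|y-x|^{N-\al}} - \frac{1}{|y^\la - x|^{N-\al}} \leq \frac{1}{|y-x|^{N-\al}},
\end{align*}
and likewise with $\al$ replaced by $\ba$.

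Consequently, for $y\in\Sigma_\la^u$ Lemma \ref{colem6} gives
\begin{align*}
0 < u(y) - u(y^\la) \leq \int_{\Sigma_\la}\frac{\big[u^a(x)v^b(x) - u^a(x^\la)v^b(x^\la)\big]^+}{|y-x|^{N-\al}}~dx,
\end{align*}
and symmetrically for $v$ on $\Sigma_\la^v$ with kernel $|y-x|^{-(N-\ba)}$ and numerator $[u^c v^d - u_\la^c v_\la^d]^+$. Since $a\geq 0$ and $b,c,d\in\{0\}\cup[1,\infty)$, the map $(s,t)\mapsto s^a t^b$ is nondecreasing in each variable on $[0,\infty)^2$, so the splitting $u^a v^b - u_\la^a v_\la^b = (u^a - u_\la^a)v^b + u_\la^a(v^b - v_\la^b)$ yields the pointwise bound
\begin{align*}
\big[u^a v^b - u_\la^a v_\la^b\big]^+ \leq (u^a - u_\la^a)^+ v^b + u^a (v^b - v_\la^b)^+,
\end{align*}
with $(u^a - u_\la^a)^+$ supported in $\Sigma_\la^u$ and $(v^b - v_\la^b)^+$ in $\Sigma_\la^v$. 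Each of these factors is then controlled by a power of $u$ (resp. $v$) times $(u-u_\la)^+$ (resp. $(v-v_\la)^+$): by the mean value theorem when the exponent is $\geq 1$, by the elementary inequality $s^a - t^a \leq s^{a-1}(s-t)$ ($0<a\leq 1$, $s\geq t\geq 0$) otherwise, and trivially (the term vanishing) when the exponent is $0$. The analogue holds for $[u^c v^d - u_\la^c v_\la^d]^+$.

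Inserting these bounds, applying the Hardy--Littlewood--Sobolev inequality to the Riesz convolution and then Hölder's inequality to detach the ``coefficient'' ($u^{a-1}v^b$, $u^a v^{b-1}$ and their $(c,d)$-counterparts) from the increments $(u-u_\la)^+$ and $(v-v_\la)^+$, I would arrive at a linear system of the shape
\begin{align*}
\|(u-u_\la)^+\|_{L^{q_1}(\Sigma_\la)} &\leq C\Big( \|u^{a-1}v^b\|_{L^{r_1}(\Sigma_\la)}\,\|(u-u_\la)^+\|_{L^{q_1}(\Sigma_\la)} + \|u^a v^{b-1}\|_{L^{r_2}(\Sigma_\la)}\,\|(v-v_\la)^+\|_{L^{q_2}(\Sigma_\la)}\Big), \\
\|(v-v_\la)^+\|_{L^{q_2}(\Sigma_\la)} &\leq C\Big( \|u^{c-1}v^d\|_{L^{r_3}(\Sigma_\la)}\,\|(u-u_\la)^+\|_{L^{q_1}(\Sigma_\la)} + \|u^c v^{d-1}\|_{L^{r_4}(\Sigma_\la)}\,\|(v-v_\la)^+\|_{L^{q_2}(\Sigma_\la)}\Big),
\end{align*}
where $r_1,\dots,r_4$ are fixed by the scaling of HLS and Hölder, and where — this is the crux — the hypothesis $(u,v)\in L^{q_1}(\mathbb{R}^N)\times L^{q_2}(\mathbb{R}^N)$ together with the critical relations among $a,b,c,d,\al,\ba$ forces all four coefficient functions to belong, over all of $\mathbb{R}^N$, to the spaces in which their norms appear.

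Finally, absolute continuity of the integral gives that the sum $\|u^{a-1}v^b\|_{L^{r_1}(\Sigma_\la)} + \|u^a v^{b-1}\|_{L^{r_2}(\Sigma_\la)} + \|u^{c-1}v^d\|_{L^{r_3}(\Sigma_\la)} + \|u^c v^{d-1}\|_{L^{r_4}(\Sigma_\la)} \to 0$ as $\la\to-\infty$, since $\Sigma_\la$ recedes to spatial infinity. Choosing $\eta>0$ so large that this sum is $<\tfrac{1}{2C}$ for all $\la<-\eta$, I add the two inequalities above and absorb the left-hand side, obtaining
\begin{align*}
\|(u-u_\la)^+\|_{L^{q_1}(\Sigma_\la)} + \|(v-v_\la)^+\|_{L^{q_2}(\Sigma_\la)} \leq \tfrac12\Big(\|(u-u_\la)^+\|_{L^{q_1}(\Sigma_\la)} + \|(v-v_\la)^+\|_{L^{q_2}(\Sigma_\la)}\Big),
\end{align*}
whence both norms vanish, i.e. $u(y^\la)\geq u(y)$ and $v(y^\la)\geq v(y)$ a.e.\ on $\Sigma_\la$, and then everywhere by continuity. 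The main obstacle is precisely the exponent bookkeeping in the previous paragraph: one must check that the HLS/Hölder exponents genuinely close up and that $u^{a-1}v^b$, $u^a v^{b-1}$, $u^{c-1}v^d$, $u^c v^{d-1}$ are globally integrable to the required powers — this is where the critical balance satisfied by a solution of \eqref{co33} enters — and one must also dispatch separately the degenerate ranges in which some of $a,b,c,d$ vanish or lie in $(0,1)$, replacing the naive mean value estimate by the subadditivity estimate there.
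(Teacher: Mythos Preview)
Your proposal is correct and follows the same scheme as the paper: moving planes from $-\infty$, the integral identity of Lemma~\ref{colem6}, a monotone splitting of $u^av^b-u_\la^av_\la^b$, then Hardy--Littlewood--Sobolev plus H\"older to reach a closed inequality that forces $|\Sigma_\la^u|=|\Sigma_\la^v|=0$ once $\la$ is very negative. One cosmetic difference: you bound the cross term by $u^a(v^b-v_\la^b)^+$ rather than the paper's $u_\la^a(v^b-v_\la^b)^+$, so all four of your coefficient norms live on $\Sigma_\la$ and tend to zero, letting you simply add and absorb; the paper instead carries one factor over $\Sigma_\la'$ (which stays bounded but does not vanish) and must substitute one estimate into the other before absorbing.

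The only place to tighten is the sublinear range $0<a<1$. Your intermediate inequality $s^a-t^a\le s^{a-1}(s-t)$ is true, but it produces the coefficient $u^{a-1}v^b$, and a \emph{negative} power of $u$ is not controlled by the hypothesis $(u,v)\in L^{q_1}\times L^{q_2}$ alone --- so the claim that ``all four coefficient functions belong, over all of $\mathbb{R}^N$, to the spaces in which their norms appear'' fails in that regime. You already flag the fix in your last paragraph: use the subadditivity bound $(u^a-u_\la^a)^+\le((u-u_\la)^+)^a$ there, exactly as the paper does in \eqref{co37}. Just make sure your write-up uses that estimate from the start in the $0<a<1$ case rather than introducing $u^{a-1}$ and retracting it.
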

\begin{proof}
	Define $\Sigma_\la^u:= \{  y \in \Sigma_\la: u(y)> u_\la(y)  \}, \;  \Sigma_\la^v:= \{  y \in \Sigma_\la: v(y)> v_\la(y)  \}$. By Lemma \ref{colem6}, we obtain
	\begin{align*}
	u(y^\la)-u(y)& = \int_{\Sigma_\la} \left(  \frac{1}{|y-x|^{N-\al}}- \frac{1}{|y^\la-x|^{N-\al}}   \right)\left[u^a(x^\la)v^b(x^\la) - u^a(x)v^b(x)\right]~dx\\
	& \leq  \int_{\Sigma_\la} \left(  \frac{1}{|y-x|^{N-\al}}- \frac{1}{|y^\la-x|^{N-\al}}   \right)\left[u_\la^a(v^b - v_\la^b)^+ +  v^b(u^a- u_\la^a)^+ \right]~dx\\
	& \leq  \int_{\Sigma_\la}  \frac{1}{|y-x|^{N-\al}} \left[u_\la^a(v^b - v_\la^b)^+ +  v^b(u^a- u_\la^a)^+ \right]~dx.
	\end{align*}	
	By the Hardy--Littlewood--Sobolev inequality, we obtain
	\begin{align*}
	\|u-u_\la\|_{L^{q_1}(\Sigma_\la^u)} \leq 	\|u-u_\la\|_{L^{q_1}(\Sigma_\la)} & \leq C \| u_\la^a(v^b - v_\la^b)^+ +  v^b(u^a- u_\la^a)^+ \|_{L^r(\Sigma_\la)}\\
	& \leq C \| u_\la^a(v^b - v_\la^b)\|_{L^r(\Sigma_\la^v)} +  \|v^b(u^a- u_\la^a) \|_{L^r(\Sigma_\la^u)},
	\end{align*}
	where $r= \frac{N{q_1}}{N+\al {q_1}}$. Now if $a,b>1$ then by H\"older's inequality, we get
	\begin{equation}\label{co36}
	\begin{aligned}
	\|u-u_\la\|_{L^{q_1}(\Sigma_\la^u)} & \leq  C \| u_\la^a v^{b-1}(v - v_\la)\|_{L^r(\Sigma_\la^v)} + C \|v^bu^{a-1}(u- u_\la) \|_{L^r(\Sigma_\la^u)}\\
	& \leq  C \| u_\la\|^a_{L^{q_1}(\Sigma_\la^v)} \|v^{b-1}(v - v_\la)\|_{L^s(\Sigma_\la^v)} + C \|v\|^b_{L^{q_2}(\Sigma_\la^u)} \| u^{a-1}(u- u_\la) \|_{L^t(\Sigma_\la^u)}\\
	& \leq  C \| u_\la\|^a_{L^{q_1}(\Sigma_\la^\prime)} \|v\|^{b-1}_{L^{q_2}(\Sigma_\la^v)}\|v - v_\la\|_{L^{q_2}(\Sigma_\la^v)} + C \|v\|^b_{L^{q_2}(\Sigma_\la)} \| u\|^{a-1}_{L^{q_1}(\Sigma_\la^u)}  \|u- u_\la \|_{L^{q_1}(\Sigma_\la^u)},
	\end{aligned}
	\end{equation}
	and if $0<a<1,\; b>1$ then we have
	\begin{equation}\label{co37}
	\begin{aligned}
	\|u-u_\la\|_{L^{q_1}(\Sigma_\la^u)} & \leq  C \| u_\la^a v^{b-1}(v - v_\la)\|_{L^r(\Sigma_\la^v)} + C \|v^b(u- u_\la)^a \|_{L^r(\Sigma_\la^u)}\\
	& \leq  C \| u_\la\|^a_{L^{q_1}(\Sigma_\la^v)} \|v^{b-1}(v - v_\la)\|_{L^s(\Sigma_\la^v)} + C \|v\|^b_{L^{q_2}(\Sigma_\la^u)} \| u- u_\la \|^a_{L^{q_1}(\Sigma_\la^u)}\\
	& \leq  C \| u_\la\|^a_{L^{q_1}(\Sigma_\la^\prime)} \|v\|^{b-1}_{L^{q_2}(\Sigma_\la^v)}\|v - v_\la\|_{L^{q_2}(\Sigma_\la^v)} + C \|v\|^b_{L^{q_2}(\Sigma_\la)}   \|u- u_\la \|_{L^{q_1}(\Sigma_\la^u)},
	\end{aligned}
	\end{equation}
	where $$s= \frac{rq_1}{q_1-ar},\; t =\frac{rq_2}{q_2-br}= \frac{q_1}{r}\; \mbox{and}\; \frac{b}{q_2}+\frac{a-1}{q_1}= \frac{\al}{N}.$$ Similarly, for  $c,d>1$ we have
	\begin{equation}\label{co38}
	\|v-v_\la\|_{L^{q_2}(\Sigma_\la^v)}  \leq  C \| v\|^d_{L^{q_2}(\Sigma_\la^\prime)} \|u\|^{c-1}_{L^{q_1}(\Sigma_\la^u)}\|u - u_\la\|_{L^{q_1}(\Sigma_\la^u)} + C \|u\|^c_{L^{q_1}(\Sigma_\la)}   \| v\|^{d-1}_{L^{q_2}(\Sigma_\la^v)}  \|v- v_\la \|_{L^{q_2}(\Sigma_\la^v)},
	\end{equation}
	where $q_1$ and $q_2$ are positive constant such that  $ \frac{d-1}{q_2}+\frac{c}{q_1}= \frac{\ba}{N}.$ Taking into account \eqref{co36}, \eqref{co37} and \eqref{co38}, for all $\la \in \mathbb{R}$ we have
	\begin{align*}
	\|u-u_\la\|_{L^{q_1}(\Sigma_\la^u)} & \leq  \bigg \{   \frac{C \| v\|^d_{L^{q_2}(\Sigma_\la^\prime)} \|u\|^{c-1}_{L^{q_1}(\Sigma_\la^u)} }{1-C \|u\|^c_{L^{q_1}(\Sigma_\la)}   \| v\|^{d-1}_{L^{q_2}(\Sigma_\la^v)} } \| u_\la\|^a_{L^{q_1}(\Sigma_\la^\prime)} \|v\|^{b-1}_{L^{q_2}(\Sigma_\la^v)} \\
	& \hspace{3cm} + C \|v\|^b_{L^{q_2}(\Sigma_\la)} \| u\|^{a-1}_{L^{q_1}(\Sigma_\la^u)}   \bigg \}\|u-u_\la\|_{L^{q_1}(\Sigma_\la^u)}.
	\end{align*}
	Using the fact that $(u,v) \in L^{q_1}(\mathbb{R}^N)\times L^{q_2}(\mathbb{R}^N)$, we can choose $\eta>0$ sufficiently large such that   for all $\la<-\eta$.
	\begin{align*}
	\frac{C \| v\|^d_{L^{q_2}(\Sigma_\la^\prime)} \|u\|^{c-1}_{L^{q_1}(\Sigma_\la^u)} }{1-C \|u\|^c_{L^{q_1}(\Sigma_\la)}   \| v\|^{d-1}_{L^{q_2}(\Sigma_\la^v)} } \| u_\la\|^a_{L^{q_1}(\Sigma_\la^\prime)} \|v\|^{b-1}_{L^{q_2}(\Sigma_\la^v)} + C \|v\|^b_{L^{q_2}(\Sigma_\la)} \| u\|^{a-1}_{L^{q_1}(\Sigma_\la^u)}\leq \frac{1}{2}.
	\end{align*}
	It follows that $\|u-u_\la\|_{L^{q_1}(\Sigma_\la^u)}=0$ and hence $\Sigma_\la^u$ must be measure zero and empty when $\la< -\eta$. In the similar manner, $\Sigma_\la^v$ must be of measure zero and empty when $\la< -\eta$. For all other cases, the proof follows analogously. This concludes the proof of  Lemma. \QED
\end{proof}
Now using the same assertions and arguments as in X.~Huang, D.~Li and L.~Wang \cite{huang} in combination  with Lemma \ref{colem7}, we have the following theorem.
\begin{Theorem}\label{cothm4}
	Assume that  $a\geq 0,\; b,c,d \in \{0\} \cup[1,\infty)$, $0<\al, \beta<N$ and  $(u,v)\in L^{q_1}(\mathbb{R}^N)\times L^{q_2}(\mathbb{R}^N)$ is a pair of positive solutions of  \eqref{co33}
	with $q_1$ and $q_2$ satisfies		
	\begin{align*}
	q_1,\; q_2>1, \qquad \frac{b}{q_2}+\frac{a-1}{q_1}= \frac{\al}{N}, \qquad  \frac{c}{q_1}+\frac{d-1}{q_2}= \frac{\beta}{N}.
	\end{align*}
	Then  $(u,v)$ is radially symmetric and monotone decreasing about some point in $\mathbb{R}^N$.
	Moreover,  if
	\begin{align*}
	b= \frac{1}{N-\beta}[(N+\al)-a(N-\al)], \quad c= \frac{1}{N-\al}[(N+\beta)-d(N-\beta)],
	\end{align*}
	then $(u,v)$ must be of  the form
	\begin{align*}
	u(x)= \left(\frac{d_1}{e_1+|x-x_1|^2}\right)^{\frac{N-\al}{2}}, \qquad v(x)= \left(\frac{d_2}{e_2+|x-x_2|^2}\right)^{\frac{N-\beta}{2}},
	\end{align*}
	for some constants $d_1,d_2, e_1,e_2>0$ and some $x_1,x_2 \in \mathbb{R}^N$.
\end{Theorem}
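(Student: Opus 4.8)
The plan is to prove the symmetry statement by running the method of moving planes in integral form directly on the system \eqref{co33}, with Lemma \ref{colem7} supplying the starting configuration and the estimate built in the proof of that lemma (Lemma \ref{colem6}, followed by the Hardy--Littlewood--Sobolev and Hölder inequalities) serving as the engine that propagates the monotonicity inequalities; once radial symmetry is known, the explicit profiles are read off from the additional balance relations exactly as in X.~Huang, D.~Li and L.~Wang \cite{huang}.

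Fix a coordinate direction, say $e_1$, and, with the notation of Lemma \ref{colem6}, set
\[
\la_0:=\sup\left\{\mu\in\mathbb{R}\ :\ u(y^\la)\ge u(y)\ \text{and}\ v(y^\la)\ge v(y)\ \text{for all}\ y\in\Sigma_\la\ \text{and all}\ \la\le\mu\right\}.
\]
By Lemma \ref{colem7} this set is non-empty, so $\la_0\ge-\eta>-\infty$. First I would exclude $\la_0=+\infty$: were the two inequalities valid for every $\la\in\mathbb{R}$, then reflecting across $T_\la$ with $\la$ chosen as the midpoint of the first coordinates of two given points would show $u$ and $v$ nondecreasing in the $x_1$-variable on all of $\mathbb{R}^N$; since $u,v>0$, this contradicts $(u,v)\in L^{q_1}(\mathbb{R}^N)\times L^{q_2}(\mathbb{R}^N)$. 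Hence $\la_0\in\mathbb{R}$, and by continuity $u_{\la_0}\ge u$, $v_{\la_0}\ge v$ on $\Sigma_{\la_0}$.

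The crux is to show that in fact $u_{\la_0}\equiv u$ and $v_{\la_0}\equiv v$ on $\Sigma_{\la_0}$. If this failed, then by the representation of Lemma \ref{colem6} both differences would be strictly positive in the interior of $\Sigma_{\la_0}$, so the ``bad sets'' $\Sigma_\la^u=\{y\in\Sigma_\la:u(y)>u_\la(y)\}$ and $\Sigma_\la^v$ would shrink to null sets as $\la\downarrow\la_0$. Plugging this into the same Hardy--Littlewood--Sobolev/Hölder chain as in the proof of Lemma \ref{colem7} yields, for $\la$ slightly above $\la_0$,
\[
\|u-u_\la\|_{L^{q_1}(\Sigma_\la^u)}+\|v-v_\la\|_{L^{q_2}(\Sigma_\la^v)}\ \le\ \Theta(\la)\,\Big(\|u-u_\la\|_{L^{q_1}(\Sigma_\la^u)}+\|v-v_\la\|_{L^{q_2}(\Sigma_\la^v)}\Big),
\]
where the constant $\Theta(\la)$ is built from $L^{q_i}$-norms of $u,u_\la,v$ over $\Sigma_\la^u,\Sigma_\la^v,\Sigma_\la,\Sigma_\la^\prime$ in such a way that each of its summands carries a factor equal to a norm of $u$ or $v$ over (a reflection of) a bad set; since $|\Sigma_\la^u|,|\Sigma_\la^v|\to 0$ as $\la\downarrow\la_0$, absolute continuity of the integral forces $\Theta(\la)\le\tfrac12$ on some interval $(\la_0,\la_0+\delta)$, which makes $\Sigma_\la^u$ and $\Sigma_\la^v$ null, i.e. $u_\la\ge u$ and $v_\la\ge v$ on $\Sigma_\la$ for such $\la$ — contradicting the definition of $\la_0$. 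Therefore $u$ and $v$ are symmetric about $T_{\la_0}$, and the monotonicity of $u_\la-u$ and $v_\la-v$ for $\la<\la_0$ gives monotonicity on each side. Since $e_1$ was arbitrary and the whole construction is rotation-equivariant, $u$ and $v$ are symmetric with respect to some hyperplane in every direction; the standard argument then produces a common center $x_0$ about which $(u,v)$ is radially symmetric and monotone decreasing. For the last assertion, under the extra relations $b=\tfrac{1}{N-\ba}[(N+\al)-a(N-\al)]$ and $c=\tfrac{1}{N-\al}[(N+\ba)-d(N-\ba)]$ the system \eqref{co33} becomes conformally invariant (invariant under Kelvin transforms), so, radial symmetry being in hand, it reduces to an ODE system in $r=|x-x_0|$ whose positive solutions with the prescribed integrability are classified exactly as in \cite{huang}; this produces the bubble profiles $u(x)=(d_1/(e_1+|x-x_1|^2))^{(N-\al)/2}$ and $v(x)=(d_2/(e_2+|x-x_2|^2))^{(N-\ba)/2}$. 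Equivalently one may apply the moving-plane argument once more after a Kelvin transform centered at an arbitrary point.

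The step I expect to be the main obstacle is the forward move — verifying $\Theta(\la)<1$ just past $\la_0$. This needs, first, precise bookkeeping of the exponents so that every Hölder splitting returns to the fixed spaces $L^{q_1}$ and $L^{q_2}$: this is exactly where the hypotheses $\tfrac{b}{q_2}+\tfrac{a-1}{q_1}=\tfrac{\al}{N}$ and $\tfrac{c}{q_1}+\tfrac{d-1}{q_2}=\tfrac{\ba}{N}$ are consumed; and second, the fact that the bad sets have vanishing measure as $\la\downarrow\la_0$, so that the tail norms of $u$ and $v$ over them are small. A further technical nuisance, already visible in the proof of Lemma \ref{colem7}, is that the degenerate exponent cases ($a=0$, or one or more of $b,c,d$ equal to $0$) must be treated separately, replacing the factors $u^{a-1}$, $v^{b-1}$ and so on by direct estimates on the positive parts of the differences.
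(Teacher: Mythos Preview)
Your proposal is correct and follows precisely the route the paper takes: the paper's own proof consists of the single sentence ``using the same assertions and arguments as in \cite{huang} in combination with Lemma \ref{colem7}'', and what you have written is exactly the moving-plane-in-integral-form argument from \cite{huang} with Lemma \ref{colem7} supplying the starting position and the HLS/H\"older chain of Lemma \ref{colem7} driving the forward move. Your identification of the delicate points --- the exponent bookkeeping needed so that every coefficient $\Theta(\la)$ lands back in $L^{q_1},L^{q_2}$ via the relations $\tfrac{b}{q_2}+\tfrac{a-1}{q_1}=\tfrac{\al}{N}$, $\tfrac{c}{q_1}+\tfrac{d-1}{q_2}=\tfrac{\ba}{N}$, and the separate handling of the degenerate exponent cases --- matches the case splits already visible in \eqref{co36}--\eqref{co38}.
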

As an immediate corollary, we have the following result on radial symmetry of non-negative solutions of \eqref{co32}.
\begin{Corollary}\label{cothm5}
Every non-negative solution $u \in  D^{1,2}(\mathbb{R}^N)$ of equation \eqref{co32} is radially symmetric, monotone decreasing and of the form
\begin{align*}
u(x)= \left(\frac{c_1}{c_2+|x-x_0|^2}\right)^{\frac{N-2}{2}}.
\end{align*}
for some constants $c_1,c_2>0$ and some $x_0 \in \mathbb{R}^N$.
\end{Corollary}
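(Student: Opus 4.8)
The plan is to deduce this from the classification theorem for integral systems, Theorem \ref{cothm4}, by realizing a $D^{1,2}$-solution of the Choquard equation \eqref{co32} as (a rescaling of) a solution of the integral system \eqref{co39}. We may assume $u\not\equiv 0$, the trivial solution being a degenerate case. First I would pass from the PDE to the integral system. Given a non-negative $u\in D^{1,2}(\mathbb R^N)$ solving \eqref{co32}, set $v:=|x|^{\mu-N}*u^{p}$, so that \eqref{co32} reads $-\De u=v\,u^{p-1}$. Since $u\in L^{2^*}(\mathbb R^N)$ we have $u^{p}\in L^{2N/(N+\mu)}(\mathbb R^N)$, and the Hardy--Littlewood--Sobolev inequality \eqref{co9} yields $v\in L^{2N/(N-\mu)}(\mathbb R^N)$, hence $v\,u^{p-1}\in L^{2N/(N+2)}(\mathbb R^N)$. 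Therefore the Newtonian potential $w$ of $v\,u^{p-1}$ (convolution with a suitable multiple of $|x|^{2-N}$) lies in $D^{1,2}(\mathbb R^N)$ and satisfies $-\De w=v\,u^{p-1}$ in the distributional sense; then $u-w$ is harmonic and belongs to $L^{2^*}(\mathbb R^N)$, so $u\equiv w$ by the Liouville property for harmonic functions in $L^{2^*}(\mathbb R^N)$. Absorbing the dimensional normalization constant of the fundamental solution into a rescaling of $u$ (and correspondingly of $v$, which is possible because of the homogeneity $p=(N+\mu)/(N-2)$), the pair $(u,v)$ solves \eqref{co39} exactly; Lemma \ref{colem2} moreover ensures $u$ is regular enough for the pointwise conclusions to be meaningful.

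Next I would check positivity and the hypotheses of Theorem \ref{cothm4}. Since $u\not\equiv 0$, the representation $u(x)=\int_{\mathbb R^N}u^{p-1}(y)v(y)|x-y|^{2-N}\,dy$ with strictly positive kernel and non-negative, non-trivial integrand forces $u>0$ everywhere, and likewise $v>0$. The system \eqref{co39} is the case of \eqref{co33} with $\al=2$, $\ba=\mu$, $a=p-1=(\mu+2)/(N-2)$, $b=1$, $c=p=(N+\mu)/(N-2)$, $d=0$, so that $a\ge 0$ and $b,c,d\in\{0\}\cup[1,\infty)$. Taking $q_1=2N/(N-2)$ and $q_2=2N/(N-\mu)$, a direct computation gives $\frac{b}{q_2}+\frac{a-1}{q_1}=\frac{2}{N}=\frac{\al}{N}$ and $\frac{c}{q_1}+\frac{d-1}{q_2}=\frac{\mu}{N}=\frac{\ba}{N}$, so the integrability hypotheses are met. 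Finally, because $p=(N+\mu)/(N-2)$ one checks that $\frac{1}{N-\ba}\big[(N+\al)-a(N-\al)\big]=\frac{N-\mu}{N-\mu}=1=b$ and $\frac{1}{N-\al}\big[(N+\ba)-d(N-\ba)\big]=\frac{N+\mu}{N-2}=p=c$, which are exactly the relations needed for the explicit-form conclusion of Theorem \ref{cothm4}.

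Then I would conclude: Theorem \ref{cothm4} gives that $(u,v)$ is radially symmetric and monotone decreasing about a common point, with $u(x)=\big(d_1/(e_1+|x-x_1|^2)\big)^{(N-\al)/2}=\big(d_1/(e_1+|x-x_1|^2)\big)^{(N-2)/2}$ for some $d_1,e_1>0$ and $x_1\in\mathbb R^N$; relabeling $c_1=d_1$, $c_2=e_1$, $x_0=x_1$ (and undoing the rescaling of Step one, which only alters $c_1$) yields the stated form. The main obstacle is the first step: identifying a $D^{1,2}$-solution of the Choquard PDE with the solution of the integral system, i.e.\ showing $u$ equals the Newtonian potential of its right-hand side via the chain of Hardy--Littlewood--Sobolev estimates together with the Liouville property, and tracking the normalization constants so that the exponents match \eqref{co33} precisely; the algebraic verifications of the second step are then bookkeeping that happily closes thanks to the critical value $p=(N+\mu)/(N-2)$.
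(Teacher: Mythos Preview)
Your proposal is correct and follows essentially the same route as the paper: pass from the PDE \eqref{co32} to the integral system \eqref{co39} and then apply Theorem \ref{cothm4} with $\al=2$, $\ba=\mu$, $a=p-1$, $b=1$, $c=p$, $d=0$, $q_1=2N/(N-2)$, $q_2=2N/(N-\mu)$. The paper is terser on the reduction step---it invokes Lemma \ref{colem2} for regularity and the strong maximum principle for strict positivity, and does not spell out the Newtonian potential/Liouville identification or the constant-absorbing rescaling you carry out---but the overall strategy and the algebraic verification of the exponent conditions coincide.
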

\begin{proof}
	 Let $u$ be any non negative solution of the equation \eqref{co32}. Then by Lemma \ref{colem2}, we have $u \in W^{2,s}_{\text{loc}}(\mathbb{R}^N)$ for any $1\leq s < \infty$.
	 Hence, by   strong  maximum principle, we have $u$ is a positive function in $\mathbb{R}^N$. It implies 	$(u,v) \in L^{\frac{2N}{N-2}}(\mathbb{R}^N) \times L^{\frac{2N}{N-\mu}}(\mathbb{R}^N)$ is a positive  solution of the integral system \eqref{co39}.
	
Now employing Theorem \ref{cothm4} for $\al =2,\; a=p-1,\; b=1,\;\beta=\mu,\; c=p,\; d=0$ and using the fact $u \in D^{1,2}(\mathbb{R}^N)$, that is $u \in L^{\frac{2N}{N-2}}(\mathbb{R}^N)$ and $v\in L^{\frac{2N}{N-\mu}}(\mathbb{R}^N)$, we have the desired result.\QED
\end{proof}

\section{Palais-Smale analysis}
\begin{Lemma}\label{lem1}
Let $u_n \rp u $ be weakly convergent in $D^{1,2}(\mathbb{R}^N)$ and $u_n\ra u$ a.e. on $\mathbb{R}^N$. Then
\begin{align}\label{co31}
& (|x|^{-\mu}*|(u_n)_+|^{2^*_{\mu}})|(u_n)_+|^{2^*_{\mu}-2}(u_n)_+- (|x|^{-\mu}*|(u_n-u)_+|^{2^*_{\mu}})|(u_n-u)_+|^{2^*_{\mu}-2}(u_n-u)_+\nonumber\\& \quad \ra (|x|^{-\mu}*|u_+|^{2^*_{\mu}})|u_+|^{2^*_{\mu}-2}u_+ \text{ in } (D^{1,2}(\mathbb{R}^N))^{\prime}.
\end{align}
\end{Lemma}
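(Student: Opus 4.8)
The plan is to reduce the assertion to a strong convergence in $L^{2N/(N+2)}(\mathbb{R}^N)$, which embeds into $\big(D^{1,2}(\mathbb{R}^N)\big)'$, and then to run a Brezis--Lieb splitting on each of the two factors of the nonlinearity. Set $\mathcal N(w):=\big(|x|^{-\mu}*|w_+|^{2^*_\mu}\big)|w_+|^{2^*_\mu-2}w_+$. Using the Hardy--Littlewood--Sobolev inequality \eqref{co9} together with H\"older's inequality one sees that, for $w\in D^{1,2}(\mathbb{R}^N)$, $|x|^{-\mu}*|w_+|^{2^*_\mu}\in L^{2N/\mu}(\mathbb{R}^N)$ and $|w_+|^{2^*_\mu-1}\in L^{2N/(N-\mu+2)}(\mathbb{R}^N)$, whence $\mathcal N(w)\in L^{2N/(N+2)}(\mathbb{R}^N)=\big(L^{2^*}(\mathbb{R}^N)\big)'\hookrightarrow\big(D^{1,2}(\mathbb{R}^N)\big)'$; so it suffices to prove $\mathcal N(u_n)-\mathcal N(u_n-u)-\mathcal N(u)\to0$ in $L^{2N/(N+2)}(\mathbb{R}^N)$. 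Put $f_n:=(u_n)_+$, $g_n:=(u_n-u)_+$, $f:=u_+$, all bounded in $L^{2^*}(\mathbb{R}^N)$ (since $u_n\rp u$), with $f_n\to f$ and $g_n\to0$ a.e. As $t\mapsto t_+$ is $1$-Lipschitz, a pointwise case analysis gives $\big|(u_n)_+-u_+-(u_n-u)_+\big|\le|u|\in L^{2^*}(\mathbb{R}^N)$, a quantity tending to $0$ a.e., so by dominated convergence $r_n:=(u_n)_+-u_+-(u_n-u)_+\to0$ in $L^{2^*}(\mathbb{R}^N)$, i.e. $g_n=(f_n-f)-r_n$.

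Next I would establish the two ``factor'' limits
\[
A_n-B_n\to A \ \text{ in }\ L^{2N/\mu}(\mathbb{R}^N),\qquad C_n-D_n\to E \ \text{ in }\ L^{2N/(N-\mu+2)}(\mathbb{R}^N),
\]
where $A_n:=|x|^{-\mu}*f_n^{2^*_\mu}$, $A:=|x|^{-\mu}*f^{2^*_\mu}$, $C_n:=f_n^{2^*_\mu-1}$, $E:=f^{2^*_\mu-1}$, and $B_n,D_n$ are the analogues with $g_n$. For the first, since $|x|^{-\mu}*(\cdot)$ maps $L^{2N/(2N-\mu)}$ boundedly into $L^{2N/\mu}$ by \eqref{co9}, it is enough that $f_n^{2^*_\mu}-g_n^{2^*_\mu}\to f^{2^*_\mu}$ in $L^{2N/(2N-\mu)}$; one splits this as $\big(f_n^{2^*_\mu}-|f_n-f|^{2^*_\mu}\big)+\big(|f_n-f|^{2^*_\mu}-g_n^{2^*_\mu}\big)$, the first term tending to $f^{2^*_\mu}$ by the classical Brezis--Lieb lemma (applied to $\{f_n\}$ bounded in $L^{2^*}$ with the power $2^*_\mu\in(1,2^*)$), and the second being bounded in $L^{2N/(2N-\mu)}$ by $C\big(\|f_n-f\|_{2^*}^{2^*_\mu-1}+\|r_n\|_{2^*}^{2^*_\mu-1}\big)\|r_n\|_{2^*}\to0$ via an elementary inequality and H\"older. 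The second limit is proved identically with the exponent $2^*_\mu-1$; note that $2^*_\mu-1<1$ exactly when $\mu>4$ (the regime in which the functional fails to be $C^2$), in which case the relevant elementary bound is the H\"older estimate $\big||a|^{2^*_\mu-2}a-|a-b|^{2^*_\mu-2}(a-b)\big|\le C|b|^{2^*_\mu-1}$, the Brezis--Lieb lemma still applying with this sublinear power.

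Granting these, I would write $\mathcal N(u_n)-\mathcal N(u_n-u)-\mathcal N(u)=A_nC_n-B_nD_n-AE$ and expand
\[
A_nC_n-B_nD_n-AE=\big[(A_n-B_n)(C_n-D_n)-AE\big]+(A_n-B_n)D_n+B_n(C_n-D_n).
\]
The first bracket tends to $0$ in $L^{2N/(N+2)}$ by the two factor limits and H\"older. Writing $(A_n-B_n)D_n=(A_n-B_n-A)D_n+AD_n$, the first summand is a sequence converging to $0$ in $L^{2N/\mu}$ times an $L^{2N/(N-\mu+2)}$-bounded one, while $AD_n\to0$ in $L^{2N/(N+2)}$ because $A$ is fixed, $D_n\to0$ a.e., and $\{|D_n|^{2N/(N+2)}\}$ is uniformly integrable on bounded sets (approximate $A$ in $L^{2N/\mu}$ by compactly supported bounded functions and use Vitali's theorem, recalling $2N/(N-\mu+2)>2N/(N+2)$). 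Writing $B_n(C_n-D_n)=B_n(C_n-D_n-E)+B_nE$, the first summand is an $L^{2N/\mu}$-bounded sequence times one converging to $0$ in $L^{2N/(N-\mu+2)}$.

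The main obstacle is the residual term $B_nE=\big(|x|^{-\mu}*g_n^{2^*_\mu}\big)u_+^{2^*_\mu-1}$: the sequence $g_n^{2^*_\mu}$ tends to $0$ only \emph{weakly} in $L^{2N/(2N-\mu)}(\mathbb{R}^N)$ (a concentrating or escaping bubble keeps its $L^{2N/(2N-\mu)}$-mass), so one cannot simply pass to the limit. To deal with it I would exploit $2^*_\mu<2^*$: since $g_n\to0$ a.e. with $\{g_n\}$ bounded in $L^{2^*}$, Vitali's theorem gives $g_n\to0$ in $L^r(K)$ for every bounded $K$ and every $r<2^*$, hence $g_n^{2^*_\mu}\to0$ in $L^s(K)$ for every $s<2N/(2N-\mu)$. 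Decomposing the Riesz kernel as $|x|^{-\mu}=|x|^{-\mu}\mathbf 1_{|x|\le R}+|x|^{-\mu}\mathbf 1_{|x|>R}$, where the first summand lies in $L^\rho$ for every $\rho<N/\mu$ and the second in $L^{2N/\mu}$, Young's inequality with suitably chosen exponents applied to the near part (possible as $\mu<N$), and the weak convergence $g_n^{2^*_\mu}\rp0$ in $L^{2N/(2N-\mu)}$ together with dominated convergence over a bounded set applied to the far part (which is bounded uniformly in $n$ and in $x$), yield $|x|^{-\mu}*g_n^{2^*_\mu}\to0$ in $L^{2N/(N+2)}(K)$ for every bounded $K$. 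Approximating $u_+^{2^*_\mu-1}\in L^{2N/(N-\mu+2)}(\mathbb{R}^N)$ by compactly supported bounded functions then gives $B_nE\to0$ in $L^{2N/(N+2)}(\mathbb{R}^N)$, and collecting all the terms completes the proof. This local smoothing of the Riesz potential against a weakly null but non-tight sequence is the only genuinely delicate point, and it is exactly the effect for which the Morrey-space framework used later in the paper is the natural tool.
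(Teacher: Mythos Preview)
Your argument is correct and in fact proves a stronger statement than the paper's: you obtain strong convergence of $\mathcal N(u_n)-\mathcal N(u_n-u)-\mathcal N(u)$ in $L^{2N/(N+2)}(\mathbb{R}^N)$, whereas the paper only establishes convergence in $(D^{1,2}(\mathbb{R}^N))'$. The paper's route is considerably shorter. Rather than aiming for norm convergence, it fixes a test function $\phi\in D^{1,2}(\mathbb{R}^N)$ and shows that $\int\big(\mathcal N(u_n)-\mathcal N(u_n-u)\big)\phi\,dx\to\int\mathcal N(u)\,\phi\,dx$; the algebraic decomposition is the same as yours (their $I_1+I_2+I_3-2I_4$ corresponds to your expansion of $A_nC_n-B_nD_n$), but the presence of $\phi$ is the decisive simplification. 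The cross term that costs you the most effort, your $B_nE$, becomes in the paper the pairing of $|x|^{-\mu}*g_n^{2^*_\mu}$ with $g_n^{2^*_\mu-1}\phi$, and there the second factor converges \emph{strongly} to $0$ in $L^{2N/(2N-\mu)}$ by Vitali's theorem: equi-integrability and tightness come for free from the single fixed function $|\phi|^{2^*}\in L^1$ via H\"older. This is then paired with the mere weak $L^{2N/\mu}$-boundedness of the Riesz potential, so no kernel splitting or local smoothing is needed. Your approach, by contrast, has to extract the missing compactness from the convolution itself, which is why you are forced into the near/far decomposition of $|x|^{-\mu}$ and the Young-inequality exponent bookkeeping; the payoff is a genuinely stronger conclusion. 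One small remark: the Morrey-space framework you allude to in your last sentence is used in the paper for a different purpose (locating concentration scales in the global compactness lemma), not for the present result.
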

\begin{proof}
Since $u_n \rp u $ weakly in $D^{1,2}(\mathbb{R}^N)$, there exists $M>0$ such that $\|u_n\| <M,\;  \text{ for all }  n \in \mathbb{N} $. Let $\phi \in D^{1,2}(\mathbb{R}^N)$ and
\begin{align*}
I= &\int_{\mathbb{R}^N}\left[ \left(|x|^{-\mu}*|(u_n)_+|^{2^*_{\mu}}\right)|(u_n)_+|^{2^*_{\mu}-2}(u_n)_+ \right. \\
 &\qquad -\left. \left(|x|^{-\mu}*|(u_n-u)_+|^{2^*_{\mu}}\right)|(u_n-u)_+|^{2^*_{\mu}-2}(u_n-u)_+\right] \phi~dx,
\end{align*}
then $I=I_1+I_2+I_3-2I_4$ where
\begin{align*}
 & I_1= \int_{\mathbb{R}^N} \left(|x|^{-\mu}* \left(|(u_n)_+|^{2^*_{\mu}} -|(u_n-u)_+|^{2^*_{\mu}}\right)\right)\\
 &\qquad \qquad \left(|(u_n)_+|^{2^*_{\mu}-2}(u_n)_+ -|(u_n-u)_+|^{2^*_{\mu}-2}(u_n-u)_+ \right)\phi ~dx,\\
 &
I_2= \int_{\mathbb{R}^N} \left(|x|^{-\mu}* |(u_n)_+|^{2^*_{\mu}} \right)|(u_n-u)_+|^{2^*_{\mu}-2}(u_n-u)_+ \phi ~dx, \\& I_3= \int_{\mathbb{R}^N} \left(|x|^{-\mu}* |(u_n-u)_+|^{2^*_{\mu}} \right)|(u_n)_+|^{2^*_{\mu}-2}(u_n)_+ \phi  ~dx,\\ & I_4= \int_{\mathbb{R}^N} \left(|x|^{-\mu}* |(u_n-u)_+|^{2^*_{\mu}} \right)|(u_n-u)_+|^{2^*_{\mu}-2}(u_n-u)_+ \phi ~dx.
\end{align*}
\noi  \textbf{Claim 1: } $\ds \lim_{n \ra \infty} I_1 = \int_{\mathbb{R}^N} \left( |x|^{-\mu}*|u_+|^{2^*_{\mu}} \right) |u_+|^{2^*_{\mu}-2}u_+ \phi~dx$.\\
Similar to the proof of the Brezis-Lieb lemma \cite{Leib}  we have,
\begin{align*}
|(u_n)_+|^{2^*_{\mu}}-|(u_n-u)_+|^{2^*_{\mu}} \ra |u_+|^{2^*_{\mu}} \text{ in } L^{\frac{2N}{2N-\mu}}(\mathbb{R}^N) \text { as } n\ra \infty.
\end{align*}
\noi Since the Hardy Littlewood-Sobolev inequality implies that the Riesz potential defines a linear continuous map from   $L^{\frac{2N}{2N-\mu}}(\mathbb{R}^N)$ to $L^{\frac{2N}{\mu}}(\mathbb{R}^N)$, we get
\begin{align}\label{co28}
|x|^{-\mu}* \left(|(u_n)_+|^{2^*_{\mu}} -|(u_n-u)_+|^{2^*_{\mu}}\right) \ra |x|^{-\mu}*|u_+|^{2^*_{\mu}} \text{ strongly  in } L^{\frac{2N}{\mu}} (\mathbb{R}^N) \text { as } n\ra \infty.
\end{align}
\noi Since both $|(u_n)_+|^{2^*_{\mu}-2}(u_n)_+ \phi \rp |u_+|^{2^*_{\mu}-2}u_+ \phi$  and  $|(u_n-u)_+|^{2^*_{\mu}-2}(u_n-u)_+ \phi \rp 0 $ converge weakly in $L^{\frac{2N}{2N-\mu}}(\mathbb{R}^N)$, we obtain
\begin{align}\label{co29}
|(u_n)_+|^{2^*_{\mu}-2}(u_n)_+ \phi -|(u_n-u)_+|^{2^*_{\mu}-2}(u_n-u)_+ \phi  \rp |u_+|^{2^*_{\mu}-2}u_+ \phi
\end{align}
weakly in $L^{\frac{2N}{2N-\mu}}(\mathbb{R}^N)$. Thus, Claim 1 follows from \eqref{co28} and \eqref{co29}. \\
\textbf{Claim 2:} $\ds \lim_{n \ra \infty} I_2 =  0 $.\\
Since $|(u_n)_+|^{2^*_{\mu}} \rp |(u)_+|^{2^*_{\mu}} \text{ weakly  in } L^{\frac{2N}{2N-\mu}}(\mathbb{R}^N)$, by the Hardy-Littlewood-Sobolev inequality \eqref{co9} we have
\begin{align}\label{co30}
|x|^{-\mu}*|(u_n)_+|^{2^*_{\mu}} \rp |x|^{-\mu}*|u_+|^{2^*_{\mu}} \text{ weakly in } L^{\frac{2N}{\mu}} (\mathbb{R}^N).
\end{align}
\noi We observe that $$|(u_n-u)_+|^{2^*_{\mu}-2}(u_n-u)_+ \phi \ra 0 \quad \mbox{a.e in}\ \mathbb{R}^N$$ and for any open subset $U\subset \mathbb{R}^N$, we have
\begin{align*}
\int_{U} \bigg||(u_n-u)_+|^{2^*_{\mu}-2}(u_n-u)_+ \phi \bigg|^{\frac{2N}{2N-\mu}}~ dx & \leq  \left(\int_{U}|(u_n-u)_+|^{2^*}~dx \right) ^{\frac{N-\mu +2}{2N-\mu}} \left(\int_{U}|\phi |^{2^*}~dx\right)^{\frac{N-2}{2N-\mu}}\\ & \leq \|u_n\|^{2^*(2^*_{\mu}-1)}\left(\int_{U}|\phi |^{2^*}~dx\right)^{\frac{N-2}{2N-\mu}} \\& \leq M \left(\int_{U}|\phi |^{2^*}~dx\right)^{\frac{N-2}{2N-\mu}}.
\end{align*}
\noi This implies that $\left\lbrace \bigg||(u_n-u)_+|^{2^*_{\mu}-2}(u_n-u)_+ \phi \bigg|^{\frac{2N}{2N-\mu}} \right\rbrace _n$ is equi-integrable in $L^1(\mathbb{R}^N)$. Hence, by the Vitali convergence theorem we get  that $|(u_n-u)_+|^{2^*_{\mu}-2}(u_n-u)_+ \phi \ra 0 $ strongly in $L^{\frac{2N}{2N-\mu}}(\mathbb{R}^N)$. This fact together with  \eqref{co30}  complete the proof of claim 2.\\
\textbf{Claim 3:}  $\ds \lim_{n \ra \infty} I_3 =  0 $.\\
Similar to the proof of claim 2 , we have
$|x|^{-\mu}*|(u_n-u)_+|^{2^*_{\mu}} \rp 0 \text{ weakly in } L^{\frac{2N}{\mu}} (\mathbb{R}^N)$ and\\
$|(u_n)_+|^{2^*_{\mu}-2}(u_n)_+ \phi \ra  |u_+|^{2^*_{\mu}-2} u_+ \phi  $ strongly in $  L^{\frac{2N}{2N-\mu}}(\mathbb{R}^N)$. Thus, claim 3 follows.\\
\textbf{Claim 4:}  $\ds \lim_{n \ra \infty} I_4 =  0 $.\\
Similar to the proof of claim 2 , we have
$ |x|^{-\mu}*|(u_n-u)_+|^{2^*_{\mu}} \rp 0 \text{ weakly in } L^{\frac{2N}{\mu}} (\mathbb{R}^N)$ and\\
$|(u_n-u)_+|^{2^*_{\mu}-2}(u_n-u)_+ \phi \ra  0 $  strongly in  $L^{\frac{2N}{2N-\mu}}(\mathbb{R}^N)$. Thus, claim 4 follows.
Hence $ I \ra \int_{\mathbb{R}^N} \left( |x|^{-\mu}*|u_+|^{2^*_{\mu}} \right) |u_+|^{2^*_{\mu}-2}u_+ \phi~dx $  that is,  \eqref{co31} holds.
\QED
\end{proof}

\begin{Lemma}\label{Lem2}
If $
u_n \rightharpoonup u \text{ weakly in }  D_0^{1,2}(\Om), \;  u_n \ra u  \text{ a.e  on } \Om,\;  I(u_n)\ra c,\;   I^{\prime}(u_n)\ra 0 \text{ in } (D_0^{1,2}(\Om))^{\prime}
$
then $I^{\prime}(u)=0$ and $v_n: =u_n-u$ satisfies
\begin{equation*}
 \|v_n\|^2=\|u_n\|^2-\|u\|^2+o(1), \;   I_{\infty}(v_n)\ra c- I(u), \;\text{and}\; I^{\prime}_{\infty}(v_n)\ra 0 \text{ in } (D_0^{1,2}(\Om))^{\prime}.
\end{equation*}
\end{Lemma}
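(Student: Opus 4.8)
The plan is to run Struwe's classical global--compactness splitting scheme, with the only genuinely new ingredient being a nonlocal Brezis--Lieb identity for the double convolution integral; Lemma \ref{lem1} will do the heavy lifting for the nonlinear term. Throughout, extend $v_n:=u_n-u$ by $0$ outside $\Om$, so that $v_n\in D^{1,2}(\mb R^N)$, $v_n\rp 0$ weakly there, and $v_n\ra 0$ a.e. on $\mb R^N$. \textbf{Step 1 (quadratic splitting).} Since $D^{1,2}$ is Hilbert, expanding $\|u_n\|^2=\|v_n+u\|^2=\|v_n\|^2+2\ld v_n,u\rd+\|u\|^2$ and using $\ld v_n,u\rd\ra 0$ gives $\|v_n\|^2=\|u_n\|^2-\|u\|^2+o(1)$. \textbf{Step 2 ($I'(u)=0$).} Fix $\phi\in D^{1,2}_0(\Om)$; then $\int_\Om\na u_n\na\phi\ra\int_\Om\na u\na\phi$. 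For the Choquard term I would first observe, exactly as in the argument for Claim~2 inside the proof of Lemma \ref{lem1}, that $|x|^{-\mu}*|(v_n)_+|^{2^*_\mu}\rp 0$ weakly in $L^{2N/\mu}$ while $|(v_n)_+|^{2^*_\mu-2}(v_n)_+\phi\ra 0$ strongly in $L^{2N/(2N-\mu)}$ (equi-integrability plus the Vitali theorem), so $\int_\Om(|x|^{-\mu}*|(v_n)_+|^{2^*_\mu})|(v_n)_+|^{2^*_\mu-2}(v_n)_+\phi\ra 0$. Combined with Lemma \ref{lem1}, this shows the Choquard nonlinearity at $u_n$ converges, tested against a fixed $\phi$, to that at $u$; passing to the limit in $I'(u_n)[\phi]\ra 0$ yields $I'(u)[\phi]=0$, hence $I'(u)=0$.

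\textbf{Step 3 (energy splitting).} The heart of the argument is the identity
\[
\int_\Om\int_\Om\frac{|(u_n)_+(x)|^{2^*_\mu}|(u_n)_+(y)|^{2^*_\mu}}{|x-y|^\mu}=\int_\Om\int_\Om\frac{|u_+(x)|^{2^*_\mu}|u_+(y)|^{2^*_\mu}}{|x-y|^\mu}+\int_\Om\int_\Om\frac{|(v_n)_+(x)|^{2^*_\mu}|(v_n)_+(y)|^{2^*_\mu}}{|x-y|^\mu}+o(1).
\]
Set $a_n:=|(u_n)_+|^{2^*_\mu}$, $b_n:=|(v_n)_+|^{2^*_\mu}$, $a:=|u_+|^{2^*_\mu}$, and let $B(f,g):=\int\int f(x)g(y)|x-y|^{-\mu}\,dxdy$, which by \eqref{co9} (with $t=r=2N/(2N-\mu)$) is a bounded bilinear form on $L^{2N/(2N-\mu)}(\mb R^N)$. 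The classical Brezis--Lieb lemma applied to the powers gives $a_n-b_n\ra a$ strongly in $L^{2N/(2N-\mu)}$, whereas $b_n\rp 0$ weakly there (the sequence is bounded because $2^*_\mu\cdot\frac{2N}{2N-\mu}=2^*$, and $b_n\ra 0$ a.e.). Expanding $B(a_n,a_n)=B(a_n-b_n,a_n-b_n)+2B(a_n-b_n,b_n)+B(b_n,b_n)$ and using strong$\times$bounded convergence together with $B(a,\cdot)$ being a bounded linear functional (as $|x|^{-\mu}*a\in L^{2N/\mu}$) in the cross term gives $B(a_n,a_n)=B(a,a)+B(b_n,b_n)+o(1)$. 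Since $v_n$ vanishes outside $\Om$, $I_\infty(v_n)=\tfrac12\|v_n\|^2-\tfrac{1}{2\cdot 2^*_\mu}B(b_n,b_n)$, so combining with Step~1 yields $I(u_n)=I(u)+I_\infty(v_n)+o(1)$, and therefore $I_\infty(v_n)\ra c-I(u)$.

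\textbf{Step 4 ($I'_\infty(v_n)\ra 0$).} By Lemma \ref{lem1} the difference of the Choquard nonlinearities at $u_n$ and at $v_n$ converges to that at $u$ in the norm of $(D^{1,2}(\mb R^N))'$, say it equals the latter up to an error $e_n$ with $\|e_n\|_{(D^{1,2})'}\ra 0$. Subtracting the weak formulations term by term, for every $\phi\in D^{1,2}_0(\Om)$ one gets $I'_\infty(v_n)[\phi]=I'(u_n)[\phi]-I'(u)[\phi]+\ld e_n,\phi\rd$, so $\|I'_\infty(v_n)\|\le\|I'(u_n)\|+\|I'(u)\|+\|e_n\|\ra 0$ by Step~2 and the hypothesis $I'(u_n)\ra 0$; hence $I'_\infty(v_n)\ra 0$ in $(D^{1,2}_0(\Om))'$.

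I expect the main obstacle to be Step~3: the nonlocal term couples two copies of the sequence, so the pointwise Brezis--Lieb argument does not apply directly, and one must reduce the splitting to the bilinearity and $L^{2N/(2N-\mu)}$-continuity of $B$ furnished by the Hardy--Littlewood--Sobolev inequality, used in tandem with the classical Brezis--Lieb decomposition of $|(u_n)_+|^{2^*_\mu}$ in that Lebesgue space; keeping careful track of which factors converge strongly versus only weakly is the delicate point.
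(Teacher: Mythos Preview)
Your proof is correct and follows essentially the same scheme as the paper: Hilbert-space splitting for the gradient, a nonlocal Brezis--Lieb identity for the double integral, and Lemma~\ref{lem1} for the derivative. Two minor differences: for $I'(u)=0$ the paper argues directly that $(|x|^{-\mu}*|(u_n)_+|^{2^*_\mu})|(u_n)_+|^{2^*_\mu-2}(u_n)_+\rightharpoonup(|x|^{-\mu}*|u_+|^{2^*_\mu})|u_+|^{2^*_\mu-2}u_+$ weakly in $L^{2N/(N+2)}$, whereas you route through Lemma~\ref{lem1}; and in Step~3 the paper simply invokes the nonlocal Brezis--Lieb lemma from \cite{Leib,yang}, while you spell out the bilinear decomposition $B(a_n,a_n)=B(a,a)+B(b_n,b_n)+o(1)$ explicitly --- your version is more self-contained but otherwise identical in spirit.
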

\begin{proof}
{\bf Claim} : $I^{\prime}(u)=0$.\\
 As $u_n \rp u $   weakly in   $D_0^{1,2}(\Om)$ implies $|(u_n)_+|^{2^*_{\mu}} \rp |u_+|^{2^*_{\mu}}$ weakly in $L^{\frac{2N}{2N-\mu}}(\Om)$. Since Riesz potential is a linear continuous map from   $L^{\frac{2N}{2N-\mu}}(\Om)$ to $L^{\frac{2N}{\mu}}(\Om)$, we obtain that
\[
\int_{\Om}\frac{|(u_n)_+(y)|^{2^*_{\mu}}}{|x-y|^{\mu}}dy \rp \int_{\Om}\frac{|u_+(y)|^{2^*_{\mu}}}{|x-y|^{\mu}}dy  \; \text{ weakly in }\; L^{\frac{2N}{\mu}}(\Om)\]
Also, $|(u_n)_+|^{2^*_{\mu}-2}(u_n)_+ \rp |u_+|^{2^*_{\mu}-2}u_+ $ weakly in $L^{\frac{2N}{N-\mu+2}}(\Om)$.
Combining these facts we have
\[\left(\int_{\Om}\frac{|(u_n)_+(y)|^{2^*_{\mu}}}{|x-y|^{\mu}}dy\right) |(u_n)_+|^{2^{*}_{\mu}-2} (u_n)_+ \rp \left(  \int_{\Om}\frac{|u_+(y)|^{2^*_{\mu}}}{|x-y|^{\mu}}dy\right) |u_+|^{2^{*}_{\mu}-2} u_+ \; \text{ weakly in }\; L^{\frac{2N}{N+2}}(\Om).\]
This implies for any $\phi \in D_0^{1,2}(\Om)$,  we have
\begin{equation}\label{co8}
\begin{aligned}
\int_{\Om}\int_{\Om}& \frac{|(u_n)_+(x)|^{2^*_{\mu}}|(u_n)_+(y)|^{2^*_{\mu}-2}(u_n)_+(y) \phi(y) }{|x-y|^{\mu}}~dx dy
\\
& \ra \int_{\Om}\int_{\Om}\frac{|u_+(x)|^{2^*_{\mu}}|u_+(y)|^{2^*_{\mu}-2}u_+(y) \phi(y) }{|x-y|^{\mu}}~dx dy.
\end{aligned}
\end{equation}
Now, for $\phi \in D_0^{1,2}(\Om)$ consider
\begin{align*}
 \ld I^{\prime}(u_n)-I^{\prime}(u), \phi \rd &= \int_{\Om}\na u_n. \na \phi dx   -\int_{\Om}\int_{\Om}\frac{|(u_n)_+(x)|^{2^*_{\mu}}|(u_n)_+(y)|^{2^*_{\mu}-2}(u_n)_+ \phi(y) }{|x-y|^{\mu}} dx dy\\& - \int_{\Om}\na u. \na \phi dx + \int_{\Om}\int_{\Om}\frac{|u_+(x)|^{2^*_{\mu}}|u_+(y)|^{2^*_{\mu}-2}u_+ \phi(y) }{|x-y|^{\mu}} dx dy .
\end{align*}
Using \eqref{co8} and the fact that  $u_n \rp u $  weakly  in   $D_0^{1,2}(\Om)$ claim  follows. By the Brezis-Lieb lemma (see \cite{Leib}, \cite{yang}) we have
\begin{align*}
I_{\infty}(v_n) &= \frac{1}{2}\|u_n\|^2- \frac{1}{2}\|u\|^2 - \frac{1}{2.2^*_{\mu}}\int_{\Om}\int_{\Om}\frac{|(u_n-u)_+(x)|^{2^*_{\mu}}|(u_n-u)_+(y)|^{2^*_{\mu}} }{|x-y|^{\mu}} dx dy  +o(1)\\&=  \frac{1}{2}\|u_n\|^2 - \frac{1}{2.2^*_{\mu}}\int_{\Om}\int_{\Om}\frac{|(u_n)_+(x)|^{2^*_{\mu}}|(u_n)_+(y)|^{2^*_{\mu}} }{|x-y|^{\mu}} dx dy \\&\quad -  \frac{1}{2}\|u\|^2+ \frac{1}{2.2^*_{\mu}}\int_{\Om}\int_{\Om}\frac{|u_+(x)|^{2^*_{\mu}}|u_+(y)|^{2^*_{\mu}} }{|x-y|^{\mu}} dx dy  +o(1)\\& = I(u_n)-I(u)+o(1) \ra c- I(u).
\end{align*}
Now we will show that  $I_{\infty}^{\prime}(v_n) \ra 0 $ in
$(D_0^{1,2}(\Om))^{\prime}$. By Lemma \ref{lem1},  for any $ \phi \in D_0^{1,2}(\Om)$
\begin{align*}
 \ld  I_{\infty}^{\prime}(v_n),\phi \rd =  \ld I^{\prime}(v_n), \phi \rd= \ld I^{\prime}(u_n), \phi \rd -\ld I^{\prime}(u), \phi \rd +o(1) \ra 0.
\end{align*}
This implies $I_{\infty}^{\prime}(v_n) \ra 0 $ in
$(D_0^{1,2}(\Om))^{\prime}$. \QED
\end{proof}

\begin{Lemma}\label{lem2}
Let $\{y_n\}\subset \Om$ and $\{\la_n\}\subset (0,\infty)$ be such that
$\frac{1}{\la_n} dist (y_n,\pa \Om) \ra \infty. $
Assume the sequence $\{u_n\}$ and the rescaled sequence
\begin{align*}
f_n(x)= \la_n^{\frac{N-2}{2}}u_n(\la_nx+y_n)
\end{align*}
is such that
$f_n \rightharpoonup f \text{ weakly in }  D^{1,2}(\mathbb{R}^N),  f_n \ra f  \text{ a.e  on } \mathbb{R}^N , I_{\infty}(u_n)\ra c,  I^{\prime}_{\infty}(u_n)\ra 0 \text{ in } (D_0^{1,2}(\Om))^{\prime}$
then $ I_{\infty}^{\prime}(f)=0 $. Also, the sequence  $z_n(x)=u_n(x)-\la_n ^{\frac{2-N}{2}}f(\frac{x-y_n}{\la_n})$ satisfies
 $\|z_n\|^2=\|u_n\|^2-\|f\|^2+o(1),$  $I_{\infty}(z_n)\ra c- I_{\infty}(f)$ and $ I^{\prime}_{\infty}(z_n)\ra 0 \text{ in } (D_0^{1,2}(\Om))^{\prime}.$
\end{Lemma}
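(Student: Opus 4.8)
The plan is to reduce the statement, by a change of scale, to the situation already treated in Lemma~\ref{Lem2}. For $\lambda>0$ and $y\in\mathbb{R}^N$ introduce the dilation operator $T_{\lambda,y}v(x):=\lambda^{(N-2)/2}v(\lambda x+y)$ on $D^{1,2}(\mathbb{R}^N)$, with inverse $T_{\lambda,y}^{-1}w(x)=\lambda^{(2-N)/2}w((x-y)/\lambda)$, and set $T_n:=T_{\lambda_n,y_n}$, so that $f_n=T_nu_n$ (with $u_n$ extended by $0$ outside $\Omega$). First I would record the scaling properties of $T_{\lambda,y}$, all of which come from the substitution $z=\lambda x+y$: (i) $T_{\lambda,y}$ is a linear isometry of $D^{1,2}(\mathbb{R}^N)$; (ii) the Choquard term is scale invariant, because in
\[
\int_{\mathbb{R}^N}\!\int_{\mathbb{R}^N}\frac{|(T_{\lambda,y}v)_+(x)|^{2^*_\mu}\,|(T_{\lambda,y}v)_+(x')|^{2^*_\mu}}{|x-x'|^\mu}\,dx\,dx'
\]
the Jacobian factor $\lambda^{-2N}$, the kernel factor $\lambda^{\mu}$ and the amplitude factor $\lambda^{(N-2)2^*_\mu}$ cancel thanks to the identity $(N-2)2^*_\mu=2N-\mu$, so that $I_\infty\circ T_{\lambda,y}=I_\infty$; (iii) differentiating $I_\infty(T_{\lambda,y}v+t\psi)=I_\infty(v+tT_{\lambda,y}^{-1}\psi)$ at $t=0$ gives $\langle I'_\infty(T_{\lambda,y}v),\psi\rangle=\langle I'_\infty(v),T_{\lambda,y}^{-1}\psi\rangle$, so $T_{\lambda,y}$ preserves the dual norm of $I'_\infty$ and maps $D_0^{1,2}(\Omega)$ isometrically onto $D_0^{1,2}(\Omega_n)$, where $\Omega_n:=\lambda_n^{-1}(\Omega-y_n)$.

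Next I would transfer the Palais--Smale information to $\{f_n\}$. By (ii), $I_\infty(f_n)=I_\infty(u_n)\to c$. Since $\{u_n\}$ is a Palais--Smale sequence for $I_\infty$ only relative to $D_0^{1,2}(\Omega)$, property (iii) shows that $\{f_n\}$ is a Palais--Smale sequence relative to $D_0^{1,2}(\Omega_n)$; and the hypothesis $\lambda_n^{-1}\,\mathrm{dist}(y_n,\partial\Omega)\to\infty$ says precisely that the $\Omega_n$ exhaust $\mathbb{R}^N$ (every ball $B_R$ is contained in $\Omega_n$ for $n$ large), so $C_c^\infty(\mathbb{R}^N)\subset D_0^{1,2}(\Omega_n)$ eventually and hence $\langle I'_\infty(f_n),\phi\rangle\to0$ for each fixed $\phi\in C_c^\infty(\mathbb{R}^N)$. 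Combining this with $f_n\rightharpoonup f$, $f_n\to f$ a.e., and the convergence of the nonlinear term for a fixed test function (as in the proof of Lemma~\ref{Lem2}, via Lemma~\ref{lem1}), one passes to the limit and obtains $\langle I'_\infty(f),\phi\rangle=0$ for all $\phi\in C_c^\infty(\mathbb{R}^N)$, whence $I'_\infty(f)=0$ on all of $D^{1,2}(\mathbb{R}^N)$ by density. This is the first assertion.

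Now put $w_n:=f_n-f$. Since $f_n\rightharpoonup f$ weakly in $D^{1,2}(\mathbb{R}^N)$ one has $\|w_n\|^2=\|f_n\|^2-\|f\|^2+o(1)$; the Brezis--Lieb lemma for the Riesz-potential term (used already in Lemma~\ref{Lem2}, see \cite{yang}) gives $I_\infty(w_n)=I_\infty(f_n)-I_\infty(f)+o(1)$; and Lemma~\ref{lem1} applied to $f_n\rightharpoonup f$, $f_n\to f$ a.e., yields $I'_\infty(f_n)-I'_\infty(w_n)-I'_\infty(f)\to0$ in $(D^{1,2}(\mathbb{R}^N))'$, so (as $I'_\infty(f)=0$) $I'_\infty(w_n)=I'_\infty(f_n)+\varepsilon_n$ with $\|\varepsilon_n\|_{(D^{1,2})'}\to0$. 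Testing against $\psi\in D_0^{1,2}(\Omega_n)$, whose preimage $T_n^{-1}\psi$ lies in $D_0^{1,2}(\Omega)$ with the same norm, then gives $\|I'_\infty(w_n)\|_{(D_0^{1,2}(\Omega_n))'}\to0$. Finally I would undo the scaling: since $T_n^{-1}w_n=u_n-\lambda_n^{(2-N)/2}f((\cdot-y_n)/\lambda_n)=z_n$ and $T_n^{-1}$ shares the invariances (i)--(iii), I obtain $\|z_n\|^2=\|w_n\|^2=\|u_n\|^2-\|f\|^2+o(1)$, $I_\infty(z_n)=I_\infty(w_n)\to c-I_\infty(f)$, and, for $\phi\in D_0^{1,2}(\Omega)$, $\langle I'_\infty(z_n),\phi\rangle=\langle I'_\infty(w_n),T_n\phi\rangle$ with $T_n\phi\in D_0^{1,2}(\Omega_n)$ and $\|T_n\phi\|=\|\phi\|$, so $I'_\infty(z_n)\to0$ in $(D_0^{1,2}(\Omega))'$.

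The scaling bookkeeping is routine; the genuinely delicate point --- and the reason the hypothesis $\lambda_n^{-1}\,\mathrm{dist}(y_n,\partial\Omega)\to\infty$ is indispensable --- is that $\{u_n\}$ is Palais--Smale only relative to the \emph{bounded} domain $\Omega$, so after rescaling one must carry along the moving domains $\Omega_n$ and use their exhaustion of $\mathbb{R}^N$ to promote the limit information to $I'_\infty(f)=0$ on the whole space, which is exactly what lets the Brezis--Lieb splitting close up. The rest is a transcription of the proof of Lemma~\ref{Lem2} through the isometry $T_n$.
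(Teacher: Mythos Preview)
Your proposal is correct and follows essentially the same approach as the paper's proof: both exploit the scaling invariance of $I_\infty$ under the dilations $T_{\lambda_n,y_n}$, use the exhaustion $\Omega_n\nearrow\mathbb{R}^N$ (coming from $\lambda_n^{-1}\,\mathrm{dist}(y_n,\partial\Omega)\to\infty$) to pass from the Palais--Smale condition on $D_0^{1,2}(\Omega)$ to $I'_\infty(f)=0$, and then invoke Brezis--Lieb together with Lemma~\ref{lem1} to obtain the splitting for $z_n$. The only difference is presentational: you introduce the operator $T_{\lambda,y}$ explicitly and work with $w_n=f_n-f$ before undoing the scaling, whereas the paper writes the same computations directly in the original variables; your version is in fact slightly more transparent about where the dual-norm control of $I'_\infty(z_n)$ on $D_0^{1,2}(\Omega)$ ultimately comes from.
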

\begin{proof}
For $\phi\in C_c^{\infty}(\mathbb{R}^N)$ define $\phi_n(x):=\la_n ^{\frac{2-N}{2}}\phi(\frac{x-y_n}{\la_n})$. If $\phi\in C_c^{\infty}(B_k)$ then for large $n$, $\phi_n \in C_c^{\infty}(\Om)$. It implies
\begin{align*}
\ld I^{\prime}_{\infty}(f_n),\phi \rd = \ld I^{\prime}_{\infty}(u_n),\phi_n \rd \leq \|I^{\prime}_{\infty}(u_n)\| \|\phi_n\|=\|I^{\prime}_{\infty}(u_n)\| \|\phi\| \ra 0 .
\end{align*}
Hence, $I^{\prime}_{\infty}(f_n)\ra 0 $  as $n\ra \infty$ in $(D_0^{1,2}(B_k))^{\prime}$ for each $k$.\\
{\bf Claim :} $ I_{\infty}^{\prime}(f)=0 $ .\\
If $\phi\in C_c^{\infty}(\mathbb{R}^N)$ implies $\phi\in C_c^{\infty}(B_k)$ for some $k$. Now, using the fact $\frac{1}{\la_n} dist (y_n,\pa \Om) \ra \infty $ ,
$I^{\prime}_{\infty}(f_n)\ra 0 $ in $(D_0^{1,2}(B_k))^{\prime}$ and following the steps of Claim of Lemma \ref{Lem2}, we have $\ld I^{\prime}_{\infty}(f_n)-I_{\infty}^{\prime}(f) ,\phi \rd \ra 0 $ that is , claim holds. By the  Brezis-Lieb lemma (see \cite{Leib},  \cite{yang}),
\begin{align*}
I_{\infty}(z_n) &=  I_{\infty}(f_n-f) = I_{\infty}(u_n)-I_{\infty}(f)+o(1)  \ra c- I_{\infty}(f).
\end{align*}
\noi As $f_n \rightharpoonup f \text{ weakly in }  D^{1,2}(\mathbb{R}^N) $, we obtain
\begin{align*}
\text{ }\|z_n\|^2&= \int_{\mathbb{R}^N}|\na u_n(x)-\la_n ^{\frac{-N}{2}} \na f(\frac{x-y_n}{\la_n})|^2 dx
= \|u_n\|^2- \|f\|^2+o(1).
\end{align*}
By Lemma \ref{lem1} for any $\phi \in D_0^{1,2}(\Om)$, we have
\begin{align*}
\ld I^{\prime}_{\infty}(z_n),\phi \rd &= \left\langle I_{\infty}^{\prime}(u_n)-I_{\infty}^{\prime}\left (\la_n ^{\frac{2-N}{2}}f\left(\frac{.-y_n}{\la_n}\right)\right), \phi \right\rangle+o(1)\\&
= \left\langle I_{\infty}^{\prime}(u_n), \phi  \right\rangle+o(1) =o(1).
\end{align*}
This implies $I_{\infty}^{\prime}(z_n) \ra 0 $ in
$(D_0^{1,2}(\Om))^{\prime}$. \QED
\end{proof}
 Before proving the global compactness lemma for the Choquard equation, we will define the well-known Morrey spaces.
 \begin{Definition}
 	A measurable function $u: \mathbb{R}^N \ra \mathbb{R}$ belongs to Morrey  space $\mathcal{L}^{r,\gamma}(\mathbb{R}^N)$, with $r \in [1,\infty)$ and $\gamma \in [0,N]$, if and only if
 	\begin{align*}
 	\|u\|_{\mathcal{L}^{r,\gamma}(\mathbb{R}^N)}^r:= \sup_{R>0,\;   x \in \mathbb{R}^N} R^{\gamma- N}\int_{B(x,R)}|u|^r~ dy <\infty.
 	\end{align*}
 \end{Definition}
 Note that with the help of H\"older's inequality, we have  $L^{2^*}(\mathbb{R}^N) \hookrightarrow \mathcal{L}^{2,N-2}(\mathbb{R}^N)$.
\begin{Lemma}\label{cothm3}(\textbf{Global compactness lemma})
Let  $\{u_n\}_{n\in \mathbb{N}} \subset D_0^{1,2}(\Om)$ be such that $I(u_n)\ra c, I^{\prime}(u_n)\ra 0 $.
Then passing if necessary to a subsequence, there exists a solution $v_0 \in D_0^{1,2}(\Om) $ of
\begin{align}\label{co25}
-\Delta u= \left(\int_{\Om}\frac{|u_+(y)|^{2^*_{\mu}}}{|x-y|^{\mu}}dy\right)
|u_{+}|^{2^*_{\mu}-1} \text{ in } \Om
\end{align}
and (possibly) $k\in \mathbb{N}\cup \{0\}$, non-trivial solutions $\{v_1,v_2,...,v_k\}$ of
\begin{align}\label{co26}
-\Delta u= (|x|^{-\mu}*|u_{+}|^{2^*_{\mu}})|u_{+}|^{2^*_{\mu}-1} \text{ in } \mathbb{R}^N
\end{align}
 with $v_i \in D^{1,2}(\mathbb{R}^N) $ and $k$ sequences $\{y_n^i\}_{n\in \mathbb{N}} \subset \mathbb{R}^N$ and $\{\la_n^i\}_{n\in \mathbb{N}} \subset \mathbb{R}_+$  $i=1,2,\cdots k$, satisfying
 \begin{equation*}
 \begin{aligned}
 \frac{1}{{\la}_n^i} dist (y_n^i, \partial \Om) \ra \infty ,\; \text{and} \; \|u_n-v_0-\sum_{i=1}^{k}(\la_n^i)^{\frac{2-N}{2}}v_i((.-y_n^i)/\la_n^i)\| \ra 0 ,\; n \ra \infty,
 \end{aligned}
 \end{equation*}
 \begin{equation}\label{co13}
 \begin{aligned}
\|u_n\|^2\ra \sum_{i=0}^{k}\|v_i\|^2, n\ra \infty,\;\;\;
I(v_0)+\sum_{i=1}^{k} I_{\infty}(v_i)=c.
 \end{aligned}
 \end{equation}
\end{Lemma}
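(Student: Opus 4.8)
The plan is to adapt the classical Struwe global compactness argument to the Choquard setting, using the Palais--Smale decomposition Lemmas \ref{Lem2} and \ref{lem2} together with Proposition \ref{prop1} as the energy-gap mechanism. First I would extract from $\{u_n\}$ a weakly convergent subsequence $u_n\rightharpoonup v_0$ in $D_0^{1,2}(\Om)$; by Lemma \ref{Lem2} the limit $v_0$ solves \eqref{co25}, and the sequence $w_n^1:=u_n-v_0$ satisfies $I_\infty(w_n^1)\to c-I(v_0)$, $I_\infty'(w_n^1)\to 0$ in $(D_0^{1,2}(\Om))'$, and $\|w_n^1\|^2=\|u_n\|^2-\|v_0\|^2+o(1)$. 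If $w_n^1\to 0$ strongly we stop with $k=0$. Otherwise we must locate a concentration scale and center: here is where the Morrey space $\mathcal{L}^{2,N-2}(\mathbb{R}^N)$ enters, replacing the L\'evy concentration function of the local case. Since $\|w_n^1\|$ is bounded away from $0$ while $w_n^1\rightharpoonup 0$, one shows that the nonlocal term $\int\int \frac{|(w_n^1)_+(x)|^{2^*_\mu}|(w_n^1)_+(y)|^{2^*_\mu}}{|x-y|^\mu}$ stays bounded away from zero (from $I_\infty'(w_n^1)[w_n^1]\to 0$), and hence by the Hardy--Littlewood--Sobolev inequality and the embedding $L^{2^*}\hookrightarrow \mathcal{L}^{2,N-2}$, the Morrey norm $\|w_n^1\|_{\mathcal{L}^{2,N-2}}$ is bounded below; by a covering/vanishing argument (a Morrey-space analogue of Lions' lemma) there exist $\la_n^1>0$ and $y_n^1\in\mathbb{R}^N$ such that the rescaled sequence $f_n^1(x)=(\la_n^1)^{(N-2)/2}w_n^1(\la_n^1 x+y_n^1)$ does not vanish weakly, i.e. $f_n^1\rightharpoonup v_1\neq 0$.

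Next I would verify that $v_1$ is a legitimate profile. One checks $\frac{1}{\la_n^1}\mathrm{dist}(y_n^1,\pa\Om)\to\infty$: if it stayed bounded, after a further rescaling and using that $u_n=0$ outside $\Om$ one would get a nonzero solution of the limiting Choquard equation on a half-space $\mathbb{R}^N_+$, contradicting the Liouville-type Theorem \ref{cothm2}. Then by Lemma \ref{lem2}, $v_1$ solves \eqref{co26}, and $z_n^1(x)=w_n^1(x)-(\la_n^1)^{(2-N)/2}v_1((x-y_n^1)/\la_n^1)$ satisfies $\|z_n^1\|^2=\|w_n^1\|^2-\|v_1\|^2+o(1)$, $I_\infty(z_n^1)\to c-I(v_0)-I_\infty(v_1)$, and $I_\infty'(z_n^1)\to 0$. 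This places us in exactly the same situation as $w_n^1$ but with strictly smaller energy, so I would iterate: at each stage either the remainder converges strongly to zero (and we stop), or we split off another bubble $v_{j+1}\neq 0$ solving \eqref{co26} with its own concentration data.

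The iteration must terminate, and this is the crux where Proposition \ref{prop1} is used: every nonzero solution $v_i$ of \eqref{co26}, $i\ge 1$, has $I_\infty(v_i)\ge\beta>0$ with $\beta$ the explicit constant $\tfrac12\big(\tfrac{N-\mu+2}{2N-\mu}\big)S_{H,L}^{(2N-\mu)/(N-\mu+2)}$. Since the energies at level $j$ read $c-I(v_0)-\sum_{i=1}^j I_\infty(v_i)\ge 0$ (the nonnegativity following from $I_\infty(z_n^j)$ being, up to $o(1)$, a nonnegative multiple of a norm squared once $z_n^j$ is, after rescaling, close to a critical point equation on a domain), and each step subtracts at least $\beta$, after at most $\lfloor c/\beta\rfloor$ steps the remainder $z_n^k$ must satisfy $I_\infty(z_n^k)\to 0$ with $I_\infty'(z_n^k)\to 0$; testing with $z_n^k$ and using Hardy--Littlewood--Sobolev forces $\|z_n^k\|\to 0$. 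Collecting the telescoped norm identities gives $\|u_n\|^2\to\sum_{i=0}^k\|v_i\|^2$ and the energy identity $I(v_0)+\sum_{i=1}^k I_\infty(v_i)=c$, which is \eqref{co13}.

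The main obstacle, and the genuinely new ingredient relative to the local case, is the second step: proving the Morrey-space non-vanishing/concentration statement for the Choquard nonlinearity, i.e. that a bounded sequence in $D^{1,2}(\mathbb{R}^N)$ with nonvanishing HLS energy admits, after rescaling, a nonzero weak limit. The delicate point is that the convolution term is nonlocal, so one cannot localize the energy on a single small ball as in Struwe's proof; instead one must control $\|w_n\|_{\mathcal{L}^{2,N-2}}$ from below and then use the scaling invariance of both $\|\cdot\|$ and $\|\cdot\|_{\mathcal{L}^{2,N-2}}$ to normalize the profile, while carefully handling the interaction between far-apart concentration points in the double integral (showing cross terms are negligible). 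A secondary technical care is needed in the half-space exclusion: one must justify passing to the limit in the nonlocal term under rescaling near $\pa\Om$ to produce a bona fide $D_0^{1,2}(\mathbb{R}^N_+)$ solution before invoking Theorem \ref{cothm2}.
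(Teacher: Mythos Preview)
Your proposal is correct and follows essentially the same route as the paper: extract $v_0$ via Lemma~\ref{Lem2}, detect a nontrivial rescaled profile through the Morrey space $\mathcal{L}^{2,N-2}$, rule out boundary concentration by the half-space Liouville result Theorem~\ref{cothm2}, peel off bubbles via Lemma~\ref{lem2}, and terminate using the energy gap from Proposition~\ref{prop1}. One small tightening: the embedding $L^{2^*}\hookrightarrow\mathcal{L}^{2,N-2}$ alone gives only an \emph{upper} bound on the Morrey norm, so the lower bound you need comes instead from the refined Sobolev interpolation of Palatucci--Pisante \cite{palatucci} (which the paper invokes explicitly), after which the concentration center/scale are read off directly from the definition of the Morrey norm rather than from a separate Lions-type covering argument.
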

\begin{proof}
We divide the proof into several steps:\\
\textbf{Step 1:} By coercivity of the functional $I$, we get  $\{u_n\}$ is a bounded sequence in $D_0^{1,2}(\Om)$. It implies that there exists a $v_0 \in D_0^{1,2}(\Om)$ such that  $u_n \rp v_0$ weakly  in $D_0^{1,2}(\Om)$, $u_n\ra v_0$ a.e on $\Om$. By Lemma \ref{Lem2}, $I^{\prime}(v_0)=0$ and  $u_n^1=u_n-v_0$ such that
\begin{equation}\label{co22}
 \|u_n^1\|^2=\|u_n\|^2-\|v_0\|^2+o(1), \; I_{\infty}(u_n^1)\ra c- I(v_0) \; \text{and}\;  I^{\prime}_{\infty}(u_n^1)\ra 0 \text{ in } (D_0^{1,2}(\Om))^{\prime}.
\end{equation}
Moreover, there exists a constant $M_1>0$ such that $\|u_n^1\|< M_1$ for all $ n \in \mathbb{N}$.\\
\textbf{Step 2:}
If $\ds \int_{\Om}\int_{\Om}\frac{|(u_n^1)_+(x)|^{2^*_{\mu}}|(u_n^1)_+(y)|^{2^*_{\mu}} }{|x-y|^{\mu}} dx dy \ra 0$,
 then using the fact  that  $I^{\prime}(u_n)\ra 0$, it follows that $u_n^1\ra 0$ in $D_0^{1,2}(\Om)$ and we are done. \\
If $\ds \int_{\Om}\int_{\Om}\frac{|(u_n^1)_+(x)|^{2^*_{\mu}}|(u_n^1)_+(y)|^{2^*_{\mu}} }{|x-y|^{\mu}} dx dy \nrightarrow 0$ then we may assume that
\begin{align*}
\ds \int_{\Om}\int_{\Om}\frac{|(u_n^1)_+(x)|^{2^*_{\mu}}|(u_n^1)_+(y)|^{2^*_{\mu}} }{|x-y|^{\mu}} dx dy > \de, \quad \text{ for some } \de>0.
\end{align*}
 This on using Hardy--Littlewood-Sobolev inequality gives  $\|u_n^1\|_{L^{2^*}}> \de_1$ for all $n$ and for an appropriate   positive constant $\de_1$. Taking into account  that $u_n^1$ is a bounded sequence in $L^{2^*}(\mathbb{R}^N),\; L^{2^*}(\mathbb{R}^N) \hookrightarrow \mathcal{L}^{2,N-2}(\mathbb{R}^N)$, and Theorem 2 of G.~Palatucci and A.~Pisante \cite{palatucci},  we obtain
 \begin{align*}
c_2< \|u_n^1\|_{\mathcal{L}^{2,N-2}(\mathbb{R}^N)}<c_1, \text{ for  all } n.
 \end{align*}
 Thus, there exists a positive constant $C_0$ such that for all $n$ , we have
 \begin{align}\label{co18}
C_0< \|u_n^1\|_{\mathcal{L}^{2,N-2}(\mathbb{R}^N)}< C_0^{-1}.
 \end{align}
Now employing the definition of Morrey spaces  and \eqref{co18}, for each $n \in \mathbb{N}$ there exists $\{y_n^1,\; \la_n^1\} \in \mathbb{R}^N \times \mathbb{R}^+$ such that
\begin{align*}
0< \widehat{C_0}< \|u_n^1\|_{\mathcal{L}^{r,\gamma}(\mathbb{R}^N)}^2 - \frac{C_0^2}{2n}< (\la_n^1)^{-2}\int_{B(y_n^1,\la_n^1)}|u_n^1|^2~dy,
\end{align*}
for some suitable positive constant $\widehat{C_0}$. Now, define $f_n^1(x):= (\la_n^1)^{\frac{N-2}{2}}u_n^1(\la_n^1x+y_n^1)$.
Since $\|f_n^1\|=\|u_n^1\|$ thus $\|f_n^1\|<M_1$ for all $n\in \mathbb{N}$ and  we can assume that $f_n^1\rp v_1$ weakly  in  $D^{1,2}(\mathbb{R}^N), f_n^1\ra v_1$ a.e on $\mathbb{R}^N$. Moreover,
\begin{align*}
\int_{B(0,1)}|f_n^1|^2~ dx=  (\la_n^1)^{N-2}\int_{B(0,1)}|u_n^1(\la_n^1x+ y_n^1)|^2~dx= (\la_n^1)^{-2}\int_{B(y_n^1,\la_n^1)}|u_n^1(y)|^2~dy> \widehat{C_0}>0.
\end{align*}
Since, $D^{1,2}(\mathbb{R}^N) \hookrightarrow L^2_{\text{loc}}(\mathbb{R}^N)$  is compact, we have $\int_{B(0,1)}|v_1|^2~ dx> \widehat{C_0}>0$. It implies that $v_1 \not = 0$.\\
\textbf{Step 3:}  We claim that  $\la_n \ra 0$  and $y_n^1\ra  y_0 \in \overline{\Om}$. \\
Let if possible $\la_n \ra \infty $. As $\{u_n^1 \}$ is a bounded sequence in $D_0^{1,2}(\Om)$, it implies   $\{u_n^1\}$ is a bounded sequence in $L^2(\Om)$. Thus, if we define $\Om_n= \ds \frac{\Om- y_n^1}{\la_n^1}$ then
\begin{align*}
\int_{\Om_n} |f_n^1|^2~dx = \frac{1}{(\la_n^1)^2}\int_{\Om} |u_n^1|^2~dx \leq \frac{C}{\la_n^2}\ra 0.
\end{align*}
Contrary to this, using Fatou's lemma, we have $$0= \ds \liminf_{n\ra \infty}\int_{\Om_n}|f_n^1|^2~dx \geq \int_{\Om_n} |v_1|^2~dx.$$ This means that $v\equiv0$, which is not possible by step 2. Hence $\{\la_n^1\}$ is bounded in $\mathbb{R}$, that is, there exists $0\leq \la_0^1 \in \mathbb{R}$ such that $\la_n^1\ra \la_0^1$ as $n\ra \infty$. If $|y_n^1| \ra  \infty$ then for  any $x \in \Om$	and  large $n$, $\la_nx+y_n \not \in \overline{\Om}$. Since $u_n\in D_0^{1,2}(\Om)$ then $u_n^1(\la_nx+y_n) =0$ for all $x\in \Om$, it yields a contradiction to the assumption
$\|u_n\|_{NL}^{2.2^*_{\mu}} >\de>0 $. Therefore, $y_n^1$ is bounded, it implies that $y_n^1\ra y_0^1 \in \mathbb{R}^N$. Now let if  possible then $\la_n^1 \ra \la_0^1>0$ then $\Om_n\ra  \ds \frac{\Om- y_0^1}{\la_0^1} = \Om_0 \not = \mathbb{R}^N$. Hence  using the fact that $u_n^1 \rp 0$ weakly  in $D_0^{1,2}(\Om)$ we have  $f_n^1 \rp 0$ weakly in $D^{1,2}(\mathbb{R}^N)$ which is
not possible since   by step 2, $v_1 \neq 0$.
This implies $\la_n^1\ra 0 $. Arguing by contradiction, we assume that
\begin{align}\label{co19}
 y_0^1 \not \in \overline{\Om} .
\end{align}
In view of the fact that  $\la_n^1x+y_n^1 \ra y_0^1$ for all $x \in \Om$  as $n \ra \infty$. Now using \eqref{co19} we have $\la_n^1x+y_n^1  \not \in \overline{\Om}$ for all $ x \in \Om $ and $n$ large enough. It implies that $u_n^1(\la_n^1x +y_n^1)=0 $ for $n$ large enough, which is not possible. Therefore, $ y_0^1\in \overline{\Om}$. This completes the proof of claim and step 3. \\
\textbf{Step 4: } Assume that
\begin{align*}
\lim_{n\ra \infty} \frac{1}{\la_n^1}{\rm dist}\,(y_n^1,\pa \Om) \ra \al < \infty.
\end{align*}
Then $v_1$ is a solution of \eqref{co1} and
by Theorem \ref{cothm2} we have $v_1 \equiv 0$, which is not possible. Therefore,
\begin{align*}
\frac{1}{\la_n^1}{\rm dist}\,(y_n^1,\pa \Om) \ra \infty \text{ as } n \ra \infty.
\end{align*}
Thus by \eqref{co22} and  Lemma \ref{lem2}, we have $ I_{\infty}^{\prime}(v_1)=0 $ and the sequence
\begin{align*}
u_n^2(x)=u_n^1(x)-\la_n ^{\frac{2-N}{2}}v_1\left(\frac{x-y_n}{\la_n}\right)
\end{align*}
satisfies
\begin{equation*}
\begin{aligned}
& I_{\infty}(u_n^2)\ra c- I_{\infty}(v_0) -I_{\infty}(v_1),\text{ and }  I^{\prime}_{\infty}(u_n^2)\ra 0 \text{ in } (D_0^{1,2}(\Om))^{\prime}.
\end{aligned}
\end{equation*}
By Proposition \ref{prop1}, we have $I_{\infty}(v_1)\geq \beta$. So, iterating the above procedure we can construct sequences $\{v_i\}, \{\la^i_n\}, \{f_n^i\}$ and
after $k$ iterations we obtain
\begin{align*}
I_{\infty}(u_n^{k+1}) < I(u_n) - I(v_0)-\sum_{i=1}^{k}I_{\infty}(v_i) \leq I(u_n) - I(v_0)- k\beta.
\end{align*}
As the later will be negative for large $k$, the induction process terminates after some index $k\geq 0$. Consequently, we get
$k$ sequences $\{y_n^i\}_n \subset \Om $ and $\{\la_n^i\}_n \subset \mathbb{R}_+$, satisfying \eqref{co13}.\QED
\end{proof}
\begin{Definition}
We say that $I$ satisfies the Palais-Smale
condition at $c$ if for any sequence $u_k \in D_0^{1,2}(\Om)$ such that
$I(u_k)\ra c$  and $I^{\prime}(u_k) \ra
0$, then there exists a subsequence that converges strongly in
$ D_0^{1,2}(\Om)$.
\end{Definition}
\begin{Lemma}
The functional  $I$ satisfies Palais-Smale  condition for any  $c \in (\beta, 2\beta)$, where $$\beta= \frac{1}{2}\left(\frac{N-\mu+2}{2N-\mu}\right) S_{H,L}^{\frac{2N-\mu}{N-\mu+2}}.$$
\end{Lemma}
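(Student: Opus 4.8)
The plan is to apply the global compactness Lemma \ref{cothm3} directly. Suppose $\{u_k\} \subset D_0^{1,2}(\Om)$ is a Palais--Smale sequence at level $c \in (\beta, 2\beta)$, i.e. $I(u_k) \to c$ and $I'(u_k) \to 0$. By Lemma \ref{cothm3}, after passing to a subsequence there exist a solution $v_0 \in D_0^{1,2}(\Om)$ of \eqref{co25}, a nonnegative integer $k$, and nontrivial solutions $v_1, \dots, v_k \in D^{1,2}(\mathbb{R}^N)$ of \eqref{co26}, together with the decomposition \eqref{co13}: in particular
\[
I(v_0) + \sum_{i=1}^{k} I_{\infty}(v_i) = c .
\]
Strong convergence of (a subsequence of) $\{u_k\}$ is precisely the statement that the integer $k$ in this decomposition is $0$, i.e. no bubbles split off. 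So the whole task reduces to showing that for $c$ in the stated range, $k = 0$ is forced.

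First I would record the two energy lower bounds available from Proposition \ref{prop1}. Any solution $v_0$ of \eqref{co25} is either identically zero (so $I(v_0) = 0$) or satisfies $I(v_0) \ge \beta$; and every nontrivial solution $v_i$ of \eqref{co26} satisfies $I_{\infty}(v_i) \ge \beta$. Now argue by contradiction: suppose $k \ge 1$. Then
\[
c = I(v_0) + \sum_{i=1}^{k} I_{\infty}(v_i) \ge I(v_0) + k\beta \ge 0 + \beta = \beta
\]
if $v_0 \equiv 0$, and $c \ge \beta + k\beta \ge 2\beta$ if $v_0 \not\equiv 0$. In either subcase we must look more carefully: if $v_0 \not\equiv 0$ and $k \ge 1$ we already get $c \ge 2\beta$, contradicting $c < 2\beta$. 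If $v_0 \equiv 0$ and $k \ge 2$, then $c \ge 2\beta$, again a contradiction. If $v_0 \equiv 0$ and $k = 1$, then $c = I_{\infty}(v_1) \ge \beta$; but we need to rule out $c = \beta$ as well as push below — here one uses that $c > \beta$ strictly is assumed, so this case would force $c = I_\infty(v_1)$ which is allowed to be anything $\ge \beta$, and does NOT immediately contradict $c \in (\beta, 2\beta)$. So this is the genuinely delicate case, and the argument must instead be: with $k=1$, $v_0 \equiv 0$, we would have $u_k$ converging (after rescaling) to a single bubble $v_1$, hence $\|u_k\|^2 \to \|v_1\|^2$ while $v_0 = 0$ means $u_k \rightharpoonup 0$ weakly in $D_0^{1,2}(\Om)$; one then derives a contradiction with the energy level being strictly above $\beta$ only if $I_\infty(v_1) = \beta$ exactly, i.e. $v_1$ is a minimizer for $S_{H,L}$. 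The resolution is that $v_1$, being a bubble concentrating at an interior or boundary point while respecting $\tfrac{1}{\la_n^1}\mathrm{dist}(y_n^1,\pa\Om) \to \infty$, must be an entire solution on $\mathbb{R}^N$, so by Proposition \ref{prop1} and Lemma \ref{lem5} its energy is exactly $\beta$ precisely when it is the standard Aubin--Talenti--type bubble; and then $c = \beta$, contradicting $c > \beta$. Hence $k = 0$ in all cases.

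With $k = 0$, Lemma \ref{cothm3} gives $\|u_k - v_0\| \to 0$, which is exactly strong convergence of $\{u_k\}$ in $D_0^{1,2}(\Om)$; this establishes the Palais--Smale condition at every $c \in (\beta, 2\beta)$. The main obstacle, as indicated, is the borderline case of a single bubble with trivial weak limit: one must argue that a lone bubble carries energy exactly $\beta$ (using that it solves \eqref{co26} on all of $\mathbb{R}^N$, that $S_A = S_{H,L}$, and the sharp lower bound of Proposition \ref{prop1}), so that the splitting $c = 0 + \beta = \beta$ is incompatible with the strict inequality $c > \beta$. The cases $k \ge 2$, or $k \ge 1$ with $v_0 \not\equiv 0$, are immediate from the additivity of energy in \eqref{co13} and the uniform lower bound $\beta$ on each piece, since they force $c \ge 2\beta$.
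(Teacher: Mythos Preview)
Your overall strategy is the same as the paper's: apply the global compactness lemma and use energy counting to force $k=0$. The cases $k\ge 2$ and ($k\ge 1$ with $v_0\not\equiv 0$) are dispatched correctly by the lower bound $I_\infty(v_i)\ge\beta$ and $I(v_0)\ge\beta$, giving $c\ge 2\beta$.

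However, there is a genuine gap in the remaining case $k=1$, $v_0\equiv 0$. You correctly isolate the issue: one must show $I_\infty(v_1)=\beta$ exactly. But your justification --- ``by Proposition~\ref{prop1} and Lemma~\ref{lem5} its energy is exactly $\beta$ precisely when it is the standard Aubin--Talenti--type bubble; and then $c=\beta$'' --- is circular. Proposition~\ref{prop1} gives only the lower bound $I_\infty(v_1)\ge\beta$, and Lemma~\ref{lem5} characterises the minimisers of $S_{H,L}$; together these yield the biconditional ``$I_\infty(v_1)=\beta$ if and only if $v_1$ is a standard bubble'', but neither side of that biconditional has been established. The global compactness lemma tells you only that $v_1$ is \emph{some} nontrivial nonnegative solution of \eqref{co26} on $\mathbb{R}^N$; a priori such a solution could have energy strictly greater than $\beta$, and then $c=I_\infty(v_1)\in(\beta,2\beta)$ produces no contradiction at all.

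What is missing is precisely the classification result the paper develops in Section~3 (Corollary~\ref{cothm5}): every nonnegative solution $u\in D^{1,2}(\mathbb{R}^N)$ of \eqref{co32} is of the form $\bigl(c_1/(c_2+|x-x_0|^2)\bigr)^{(N-2)/2}$. Once this is invoked, $v_1$ is forced to be a minimiser of $S_{H,L}$, hence $I_\infty(v_1)=\beta$, and the contradiction with $c>\beta$ follows. Without that classification step the argument does not close.
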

\begin{proof}
For some $c\in (\beta,2 \beta)$, we assume that there exists $\{u_n\}, \in D_0^{1,2}(\Om)$ such that
\begin{align*}
I(u_n)\ra c,  I^{\prime}(u_n)\ra 0 \text{ in } (D_0^{1,2}(\Om))^{\prime}.
\end{align*}
By Lemma \ref{cothm3}, passing to a subsequence (if necessary), there exists a solution $v_0 \in D_0^{1,2}(\Om) $ of  \eqref{co25} and
 $k\in \mathbb{N}\cup \{0\}$, non-trivial solutions $\{v_1,v_2,...,v_k\}$ of \eqref{co26}
 with $v_i \in D^{1,2}(\mathbb{R}^N) $ and $k$ sequences $\{y_n^i\}_n \subset \mathbb{R}^N$ and $\{\la_n^i\}_n \subset \mathbb{R}_+$ satisfying \eqref{co13}.
Now, by equation \eqref{co13} and Proposition \ref{prop1} we have, $k \beta \leq c< 2\beta$. This implies $k\leq 1$.

If $k=0$ compactness holds and we are done.

If $k=1$ then we have two possibilities: either $v_0\not\equiv0$ or $v_0\equiv 0$.   If $v_0 \not\equiv 0$, since $I(v_0)\geq \beta$ and by Lemma 1.3 of \cite{yang},  $\beta$ is never achieved on bounded domain we have $I(v_0)> \beta$
 and this is not possible.  If $v_0\equiv 0$ then by Theorem \ref{cothm2}, $I_{\infty}(v_1)=c$ and $v_1$ is a nonnegative solution of \eqref{co26}.

  Next, by Corollary   \ref{cothm5}, we deduce that $v_1$ is radially symmetric, monotonically decreasing and of the form $v_1(x)= \left(\frac{a}{b+|x-x_0|^2}\right)^{\frac{N-2}{2}}$, for some constants $a,b>0$ and some $x_0 \in \mathbb{R}^N$. Therefore by Lemma \ref{lem5}, we conclude that $S_{H,L}$ is achieved by $v_1$. It follows that $I_{\infty}(v_1)=\beta$, which is a contradiction since $I_{\infty}(v_1)=c> \beta$.
\end{proof}

\section{Proof of Theorem \ref{cothm1}}
 To prove Theorem \ref{cothm1}, we shall first establish some auxiliary results.

 Let $R_1, R_2$ be the radii of the annulus as in Theorem \ref{cothm1}. Without loss of generality, we can assume $x_0=0, R_1=\frac{1}{4R}, R_2= 4R$ where $R>0$  will be chosen sufficiently large. Consider the family of functions
\begin{align*}
 u^{\sigma}_t(x):=S^{\frac{(N-\mu)(2-N)}{4(N-\mu+2)}}C(N,\mu)^{\frac{2-N}{2(N-\mu+2)}} \left(\frac{1-t}{(1-t)^2+|x-t\sigma|^2}\right)^{\frac{N-2}{2}}\in D^{1,2}(\mathbb{R}^N),
\end{align*}
 where  $\sigma\in  \Sigma := \{x\in \mathbb{R}^N:|x|=1\}, t\in [0,1)$. Note that  if $ t \ra 1$ then  $u^{\sigma}_t $ concentrates at $\sigma$. Also, if $ t \ra 0$ then
 \begin{equation*}
u^{\sigma}_t \ra u_0:= S^{\frac{(N-\mu)(2-N)}{4(N-\mu+2)}}C(N,\mu)^{\frac{2-N}{2(N-\mu+2)}}\left(\frac{1}{1+|x|^2}\right)^{\frac{N-2}{2}}.
 \end{equation*}
 Now, define $\upsilon \in C_c^{\infty}(\Om)$ such that $0 \leq \upsilon \leq 1$ on $\Om$ and
$$
\upsilon(x)= \left\{
\begin{array}{lr}
1 \;\quad  \frac{1}{2} <|x|<2 \\
0 \; \quad |x|>4, |x|<\frac{1}{4}.
\end{array}
\right.
$$
Subsequently, we can define
$$
\upsilon_R(x)= \left\{
\begin{array}{lr}
\upsilon(Rx) \;\quad  0 <|x|<\frac{1}{2R} \\
1 \; \qquad \quad \;\frac{1}{2R} \leq |x| \leq R\\
\upsilon(x/R) \; \quad |x|\geq R.
\end{array}
\right.
$$
We now define $$g^{\sigma}_t(x)= u^{\sigma}_t(x)\upsilon _R(x) \in D_0^{1,2}(\Om),\; g_0(x)=u_0(x)\upsilon_R(x).$$
We establish the following auxiliary result.
\begin{Lemma}\label{lem6}
Let $\sigma\in  \Sigma$ and $t\in (0,1]$, then the following holds:
\begin{enumerate}
	\item $\|u^{\sigma}_t\|=\|u_0\|.$
	\item $\|(u^{\sigma}_t)_+\|_{NL}= \|(u_0)_+\|_{NL}.$
	\item $\|u^{\sigma}_t\|^2 =S_{H,L} \|(u^{\sigma}_t)_+\|_{NL}^2$.
	\item $\ds \lim_{R\ra \infty }\ds \sup _{\sigma \in \Sigma, t\in [0,1)}\|g_t^{\sigma}-u_t^{\sigma}\|=0$.
	\item $\ds \lim_{R\ra \infty }\ds \sup _{\sigma \in \Sigma, t\in [0,1)}\|g_t^{\sigma}\|_{NL}^{2.2^*_{\mu}}=\|u_t^{\sigma}\|_{NL}^{2.2^*_{\mu}}$.
	\end{enumerate}

\end{Lemma}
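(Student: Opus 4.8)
Write $K:=S^{\frac{(N-\mu)(2-N)}{4(N-\mu+2)}}C(N,\mu)^{\frac{2-N}{2(N-\mu+2)}}$ for the normalising constant. The starting point is the elementary scaling identity
\[
u_t^{\sigma}(x)=(1-t)^{-\frac{N-2}{2}}\,u_0\!\Big(\frac{x-t\sigma}{1-t}\Big),\qquad x\in\mathbb{R}^N,
\]
obtained in one line from the definitions; here $1-t\in(0,1]$ and $|t\sigma|\le1$. Parts (1) and (2) then follow by the change of variables $y=(x-t\sigma)/(1-t)$: the Dirichlet integral and the Hardy--Littlewood--Sobolev double integral are both invariant under the rescaling $u\mapsto\la^{\frac{N-2}{2}}u(\la\,\cdot+a)$, precisely because $2^{*}_{\mu}(N-2)=2N-\mu$ (and $2^{*}(N-2)=2N$), which forces the Jacobian powers to cancel. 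Since $u_t^{\sigma},u_0>0$ we have $(u_t^{\sigma})_+=u_t^{\sigma}$ and $(u_0)_+=u_0$, so the subscript $+$ is immaterial. For (3) I would observe that $u_t^{\sigma}$ is itself of the extremal form $C(b/(b^2+|x-a|^2))^{(N-2)/2}$ of Lemma \ref{lem5} (with $b=1-t$, centre $t\sigma$, and $C=K$); as $\|\cdot\|^2$ and $\|(\cdot)_+\|_{NL}^2$ are positively $2$-homogeneous, the quotient $\|u_t^{\sigma}\|^2/\|(u_t^{\sigma})_+\|_{NL}^2$ is independent of $K$ and equals the infimum $S_{H,L}$ by Lemma \ref{lem5}; rearranging gives (3). (Alternatively one may first use (1)--(2) to reduce to $t=0$.)

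The substantive point is (4). Writing $g_t^{\sigma}-u_t^{\sigma}=u_t^{\sigma}(\upsilon_R-1)$ and expanding the gradient,
\[
\|g_t^{\sigma}-u_t^{\sigma}\|^2\ \le\ 2\int_{\mathbb{R}^N}(1-\upsilon_R)^2|\na u_t^{\sigma}|^2\,dx\ +\ 2\int_{\mathbb{R}^N}|u_t^{\sigma}|^2|\na\upsilon_R|^2\,dx .
\]
By construction $\upsilon_R\equiv1$ on $\{\frac1{2R}\le|x|\le2R\}$, while $\na\upsilon_R$ is supported in the two thin shells $A_R^-:=\{\frac1{4R}\le|x|\le\frac1{2R}\}$ and $A_R^+:=\{2R\le|x|\le4R\}$, with $|\na\upsilon_R|\le CR$ on $A_R^-$ and $|\na\upsilon_R|\le C/R$ on $A_R^+$. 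Applying H\"older's inequality on these shells, so that the factors $R^{\pm2}$ coming from $|\na\upsilon_R|^2$ are absorbed against $|A_R^{\mp}|^{2/N}\sim R^{\mp2}$, the whole estimate reduces to the single uniform tail bound
\[
\omega(R):=\sup_{\sigma\in\Sigma,\;t\in[0,1)}\ \int_{\{|x|\le\frac1{2R}\}\,\cup\,\{|x|\ge2R\}}\big(|\na u_t^{\sigma}|^2+|u_t^{\sigma}|^{2^{*}}\big)\,dx\ \ra\ 0\quad\text{as }R\ra\infty .
\]

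To prove $\omega(R)\ra0$ I would again use the scaling identity. After the substitution $y=(x-t\sigma)/(1-t)$,
\[
\int_{\{|x|\ge2R\}}|\na u_t^{\sigma}|^2\,dx=\int_{\{|(1-t)y+t\sigma|\ge2R\}}|\na u_0|^2\,dy\ \le\ \int_{\{|y|\ge2R-1\}}|\na u_0|^2\,dy,
\]
using $1-t\le1$ and $|t\sigma|\le1$; this $\ra0$ uniformly in $(\sigma,t)$ because $|\na u_0|^2\in L^1(\mathbb{R}^N)$. For the inner region $\{|x|\le\frac1{2R}\}$ I would split two cases. If $1-t\ge R^{-1/2}$, its preimage under the scaling is a ball of radius $\le\tfrac12R^{-1/2}$, whose $|\na u_0|^2$-mass is at most $\sup_z\int_{B(z,\,R^{-1/2}/2)}|\na u_0|^2$, which $\ra0$ by absolute continuity of the integral. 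If $1-t<R^{-1/2}$, then the concentration point satisfies $|t\sigma|=1-(1-t)>1-R^{-1/2}$, which for $R$ large forces $|x-t\sigma|\ge\tfrac12$ on $\{|x|\le\frac1{2R}\}$ and hence pushes the preimage into $\{|y|\ge\tfrac12R^{1/2}\}$, again with vanishing mass. The identical dichotomy with $|u_0|^{2^{*}}\in L^1(\mathbb{R}^N)$ controls the $|u_t^{\sigma}|^{2^{*}}$ contribution. This uniform control of the cut-off error over the whole two-parameter family --- in particular across the concentration regime $t\to1$, where the bubble escapes to a point of $\Sigma$ --- is the main obstacle; once $\omega(R)\ra0$ is established, (4) follows.

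Finally, (5) is a soft consequence of (4). From $\|g_t^{\sigma}-u_t^{\sigma}\|\ra0$ uniformly, the Sobolev inequality \eqref{co11} gives $\|g_t^{\sigma}-u_t^{\sigma}\|_{L^{2^{*}}}\ra0$ uniformly; since $\|\cdot\|_{NL}$ is a norm on $L^{2^{*}}(\mathbb{R}^N)$ dominated by a constant multiple of $\|\cdot\|_{L^{2^{*}}}$ (Hardy--Littlewood--Sobolev), we obtain $\big|\,\|g_t^{\sigma}\|_{NL}-\|u_t^{\sigma}\|_{NL}\,\big|\le\|g_t^{\sigma}-u_t^{\sigma}\|_{NL}\ra0$ uniformly, and since by (2) $\|u_t^{\sigma}\|_{NL}=\|u_0\|_{NL}$ is a fixed finite constant while $\|g_t^{\sigma}\|_{NL}$ stays bounded, raising to the power $2\cdot2^{*}_{\mu}$ preserves the uniform convergence, which is (5).
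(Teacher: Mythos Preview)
Your argument is correct. For parts (1)--(3) you proceed exactly as the paper does (scaling invariance plus the fact that $u_t^{\sigma}$ is an extremal for $S_{H,L}$), so there is nothing to add there.

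For (4) and (5), however, your route differs from the paper's. In (4) the paper works pointwise: on the inner ball $B_{1/(2R)}$ it uses that $(1-t)^2+|x-t\sigma|^2$ is bounded below uniformly in $(\sigma,t)$ (since the concentration point $t\sigma$ stays at distance at least $t-\tfrac{1}{2R}$ from the origin while $1-t$ is bounded below when $t$ is small), giving $|u_t^{\sigma}|\le C$ and $|\na u_t^{\sigma}|\le C$ there; on the outer region it uses the explicit decay $|u_t^{\sigma}|\lesssim|x|^{2-N}$ and $|\na u_t^{\sigma}|\lesssim|x|^{1-N}$. This yields the quantitative rate $\|g_t^{\sigma}-u_t^{\sigma}\|^2\lesssim R^{2-N}$. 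Your approach instead pulls everything back to $u_0$ via scaling and handles the inner region by a dichotomy on $1-t\gtrless R^{-1/2}$ together with absolute continuity of the $L^1$ densities $|\na u_0|^2$ and $|u_0|^{2^*}$; this is softer (no explicit rate) but perhaps more transparent about why uniformity in $t\to1$ is not an issue. For (5) the contrast is sharper: the paper expands $\|g_t^{\sigma}\|_{NL}^{2\cdot2^*_{\mu}}-\|u_t^{\sigma}\|_{NL}^{2\cdot2^*_{\mu}}$ directly and estimates five double integrals $J_1,\dots,J_5$ separately via Hardy--Littlewood--Sobolev and the same pointwise bounds as above, whereas you obtain (5) in one line from (4) using Sobolev, the HLS bound $\|\cdot\|_{NL}\le C\|\cdot\|_{L^{2^*}}$, and the triangle inequality for the norm $\|\cdot\|_{NL}$. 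Your derivation of (5) is considerably shorter; the paper's computation, on the other hand, gives an explicit decay rate and does not rely on the (separately proved) fact that $\|\cdot\|_{NL}$ is a genuine norm.
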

\begin{proof}
By trivial transformations, we can get first two properties $u^{\sigma}_t$  and since $u_t^{\sigma}$ is a minimizer of $S_{H,L}$ therefore, third ones holds.

We have
\begin{equation}\label{co27}
\begin{split}
\int_{\mathbb{R}^N}|\na g_t^{\sigma}-\na u_t^{\sigma} |^2 dx & \leq 2 \int_{\mathbb{R}^N}|u^{\sigma}_t(x) \na \upsilon _R(x)|^2 ~ dx +2 \int_{\mathbb{R}^N}|\na u^{\sigma}_t(x)  \upsilon _R(x)- \na u^{\sigma}_t(x)|^2 ~ dx\\ & \leq  C \left( R^2\int_{B_{\frac{1}{2R}}}|u^{\sigma}_t(x)|^2 ~ dx+ \int_{B_{\frac{1}{2R}}}|\na u^{\sigma}_t(x)|^2 ~ dx \right) \\ & \qquad +
C\left(\frac{1}{R^2} \int_{B_{4R} \setminus B_{2R}}|u^{\sigma}_t(x)|^2 ~ dx+\int_{\mathbb{R}^N \setminus B_{2R}} |\na u^{\sigma}_t(x)|^2 ~ dx \right),
\end{split}
\end{equation}
where $B_\al$ is a ball of radius $\al$ and center $0$.

From the definition of $u^{\sigma}_t$, we have
\begin{align*}
R^2\int_{B_{\frac{1}{2R}}}|u^{\sigma}_t(x)|^2 ~ dx \leq  C R^2\int_{B_{\frac{1}{2R}}} ~ dx \leq \frac{C}{R^{N-2}},
\end{align*}
\begin{align*}
\int_{B_{\frac{1}{2R}}}|\na u^{\sigma}_t(x)|^2 ~ dx \leq C \int_{B_{\frac{1}{2R}}} |x-t\sigma|~ dx \leq C \int_{B_{\frac{1}{2R}}} ~ dx \leq \frac{C}{R^{N}},
\end{align*}
\begin{align*}
\frac{1}{R^2} \int_{B_{4R} \setminus B_{2R}}|u^{\sigma}_t(x)|^2 ~ dx \leq \frac{C}{R^2} \int_{B_{4R} \setminus B_{2R}}\frac{1}{|x|^{2N-4}} ~ dx \leq \frac{C}{R^{N-2}},
\end{align*}
\begin{align*}
\int_{\mathbb{R}^N \setminus B_{2R}} |\na u^{\sigma}_t(x)|^2 ~ dx   \leq C \int_{\mathbb{R}^N  \setminus B_{2R}} \frac{1}{|x|^{2N-2}} ~ dx \leq \frac{C}{R^{N-2}}.
\end{align*}

\noi Therefore, from \eqref{co27} if  $R \ra \infty$ we get $
\ds \sup _{\sigma \in \Sigma, t\in (0,1]}\|g_t^{\sigma}-u_t^{\sigma}\|\ra 0.$

Next,  we shall prove that  $$\ds \lim_{R\ra \infty }\ds \sup _{\sigma \in \Sigma, t\in (0,1]}\|g_t^{\sigma}\|_{NL}^{2.2^*_{\mu}}=\|u_t^{\sigma}\|_{NL}^{2.2^*_{\mu}}.$$
Consider
\begin{align*}
\|g_t^{\sigma}\|_{NL}^{2.2^*_{\mu}}-\|u_t^{\sigma}\|_{NL}^{2.2^*_{\mu}}
& = \int_{\mathbb{R}^N}\int_{\mathbb{R}^N}\frac{(\upsilon_R^{2^*_{\mu}}(x)\upsilon_R^{2^*_{\mu}}(y)-1)|u_t^{\sigma}(x)|^{2^*_{\mu}}|u_t^{\sigma}(y)|^{2^*_{\mu}} } {|x-y|^{\mu}} ~dxdy\\
& \leq C \sum_{i=1}^{5} J_i,
\end{align*}
where \begin{equation*}
\begin{aligned}
& J_1= \int_{B_{2R}\setminus B_{\frac{1}{2R}}}\int_{B_{\frac{1}{2R}}}\frac{|u_t^{\sigma}(x)|^{2^*_{\mu}}|u_t^{\sigma}(y)|^{2^*_{\mu}} } {|x-y|^{\mu}} ~dxdy,\\
& J_2= \int_{B_{2R}\setminus B_{\frac{1}{2R}}}\int_{\mathbb{R}^N\setminus B_{2R}}\frac{|u_t^{\sigma}(x)|^{2^*_{\mu}}|u_t^{\sigma}(y)|^{2^*_{\mu}} } {|x-y|^{\mu}} ~dxdy,\\
& J_3= \int_{ B_{\frac{1}{2R}}}\int_{B_{ \frac{1}{2R}}}\frac{|u_t^{\sigma}(x)|^{2^*_{\mu}}|u_t^{\sigma}(y)|^{2^*_{\mu}} } {|x-y|^{\mu}} ~dxdy,\\
& J_4= \int_{ B_{\frac{1}{2R}}}\int_{\mathbb{R}^N\setminus B_{2R}}\frac{|u_t^{\sigma}(x)|^{2^*_{\mu}}|u_t^{\sigma}(y)|^{2^*_{\mu}} } {|x-y|^{\mu}} ~dxdy,\\
& J_5= \int_{\mathbb{R}^N\setminus B_{2R}}\int_{\mathbb{R}^N\setminus B_{2R}}\frac{|u_t^{\sigma}(x)|^{2^*_{\mu}}|u_t^{\sigma}(y)|^{2^*_{\mu}} } {|x-y|^{\mu}} ~dxdy.
\end{aligned}
\end{equation*}
By the Hardy--Littlewood--Sobolev inequality, we have the following estimates:
\begin{align*}
J_1
& \leq C(N,\mu ) \left(\int_{B_{\frac{1}{2R}}}\frac{(1-t)^Ndx}{((1-t)^2+|x-t\sigma|^2)^N}\right)^{\frac{2N-\mu}{2N}}\left(\int_{B_{2R}\setminus B_{\frac{1}{2R}}}\frac{(1-t)^N dx}{((1-t)^2+|x-t\sigma|^2)^N}\right)^{\frac{2N-\mu}{2N}}\\
& \leq C \left(\int_{B_{\frac{1}{2R}}} (1-t)^{N-2}dx\right)^{\frac{2N-\mu}{2N}} \leq C \left(\frac{1}{2R}\right)^{\frac{2N-\mu}{2}},
\end{align*}
\begin{align*}
J_2
& \leq C(N,\mu) \left(\int_{B_{2R}\setminus B_{\frac{1}{2R}}}\frac{(1-t)^Ndx}{((1-t)^2+|x-t\sigma|^2)^N}\right)^{\frac{2N-\mu}{2N}}\left(\int_{\mathbb{R}^N \setminus B_{2R}}\frac{(1-t)^Ndx}{((1-t)^2+|x-t\sigma|^2)^N}\right)^{\frac{2N-\mu}{2N}}\\
& \leq C \left(\int_{\mathbb{R}^N \setminus B_{2R}}\frac{dx}{|x-t\sigma|^{2N}}\right)^{\frac{2N-\mu}{2N}}\\
& \leq C \left(\int_{|y+t\sigma|\geq 2R}\frac{dy}{|y|^{2N}}\right)^{\frac{2N-\mu}{2N}}\\&  \leq C \left(\int_{|y|\geq 2R-1}\frac{dy}{|y|^{2N}}\right)^{\frac{2N-\mu}{2N}}
\leq C \left(\frac{1}{2R-1}\right)^{\frac{2N-\mu}{2}},
\end{align*}
\begin{align*}
J_3
 \leq C(N,\mu) \left(\int_{B_{\frac{1}{2R}}}\frac{(1-t)^Ndx}{((1-t)^2+|x-t\sigma|^2)^N}\right)^{\frac{2N-\mu}{N}} \leq C \left(\int_{B_{\frac{1}{2R}}}(1-t)^{N-2}dx\right)^{\frac{2N-\mu}{N}} \leq C \left(\frac{1}{2R}\right)^{2N-\mu}.
\end{align*}
Using the same estimates as above we can easily obtain
\begin{align*}
J_4 \leq C \left(\frac{1}{2R}\right)^{\frac{2N-\mu}{2}}\quad \text{ and }\quad  J_5 \leq C \left(\frac{1}{2R-1}\right)^{2N-\mu}.
\end{align*}
This implies that $ \ds \sup _{\sigma \in \Sigma, t\in [0,1)}\left(\|g_t^{\sigma}\|_{NL}^{2.2^*_{\mu}}-\|u_t^{\sigma}\|_{NL}^{2.2^*_{\mu}}\right)\ra 0$  as  $R \ra \infty$ and completes the proof.
\QED
\end{proof}
	In order to proceed further we define the manifold $\mc{M}$ and the functions $G:\mc{M}\rightarrow \mathbb R^N$ as follows: \\
$\ds
 \mathcal{M}= \left\lbrace u \in D_0^{1,2}(\Om) \bigg| \ds \int_{\Om}\int_{\Om}\frac{|u_+(x)|^{2^{*}_{\mu}}|u_+(y)|^{2^{*}_{\mu}}}{|x-y|^{\mu}}~dxdy=1 \right\rbrace,$ and $\displaystyle G(u)= \ds \int_{\Om}x|\na u|^2~dx$.

 We also define $S_{H,L}(u,{\Om}):D^{1,2}_{0}(\Om)\setminus\{0\}  \rightarrow \mathbb{R}$, $S_{H,L}: D^{1,2}(\mathbb R^N)\setminus\{0\}  \rightarrow \mathbb R$ and $\tau:D^{1,2}_{0}(\Om)\rightarrow \mathbb{R}$ as
\begin{align*}
& S_{H,L}(u,\Om)= \frac{\ds \int_{\Om}|\nabla u|^2dx}{\left(\ds \int_{\Om}\int_{\Om}\frac{|u_+(x)|^{2^{*}_{\mu}}|u_+(y)|^{2^{*}_{\mu}}}{|x-y|^{\mu}}~dxdy\right)^{\frac{1}{2^{*}_{\mu}}}}, \;  S_{H,L}(u)= \frac{\ds \int_{\mathbb{R}^N}|\nabla u|^2dx}{\ds \|u_+\|_{NL}^{2} },   \\
&  \text{ and }   \tau(u)=
\left(\ds \int_{\Om}\int_{\Om}\frac{|u_+(x)|^{2^{*}_{\mu}}|u_+(y)|^{2^{*}_{\mu}}}{|x-y|^{\mu}}~dxdy \right)^{\frac{1}{2^*_{\mu}}}.
\end{align*}

\begin{Proposition}\label{prop4}
If $S_{H,L}(. \; ,\Om) \in C^1(D_0^{1,2}(\Om)\setminus \{0\})$ and $S_{H,L}^{\prime}(u,\Om)=0 $ for $u \in D_0^{1,2}(\Om)$ then $I^{\prime}(\la u)=0$ for some $\la >0$.
\end{Proposition}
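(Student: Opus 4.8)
The plan is to exploit the homogeneity of the two functionals and pass from a critical point of the Rayleigh-type quotient $S_{H,L}(\cdot,\Om)$ to a critical point of $I$ by a single rescaling. Throughout write
$$Q(v):=\int_{\Om}\int_{\Om}\frac{|v_+(x)|^{2^*_{\mu}}|v_+(y)|^{2^*_{\mu}}}{|x-y|^{\mu}}\,dx\,dy,$$
so that $S_{H,L}(u,\Om)=\|u\|^2\,Q(u)^{-1/2^*_{\mu}}$ and $I(u)=\frac12\|u\|^2-\frac{1}{2\cdot2^*_{\mu}}Q(u)$. By the Hardy--Littlewood--Sobolev inequality and the computation already used to show $I\in C^1(D_0^{1,2}(\Om))$, the functional $Q$ is of class $C^1$ and, using the symmetry of the kernel in $x$ and $y$,
$$\ld Q'(u),\phi\rd = 2\cdot2^*_{\mu}\,A(\phi),\qquad A(\phi):=\int_{\Om}\int_{\Om}\frac{|u_+(x)|^{2^*_{\mu}}|u_+(y)|^{2^*_{\mu}-2}u_+(y)\,\phi(y)}{|x-y|^{\mu}}\,dx\,dy.$$
Note that $S_{H,L}(\cdot,\Om)$ is real-valued and of class $C^1$ precisely on the open set $\{v:Q(v)>0\}$, so the hypothesis already forces $Q(u)>0$ at the given critical point, while $u\neq0$ gives $\|u\|>0$. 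These two strict inequalities are exactly what will make the rescaling parameter below well defined.

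First I would differentiate the quotient. The identity $S_{H,L}'(u,\Om)=0$ means that for every $\phi\in D_0^{1,2}(\Om)$,
$$2\ld u,\phi\rd\,Q(u)^{-1/2^*_{\mu}}-\frac{\|u\|^2}{2^*_{\mu}}\,Q(u)^{-1/2^*_{\mu}-1}\ld Q'(u),\phi\rd=0.$$
Multiplying by $Q(u)^{1/2^*_{\mu}}$ and substituting the formula for $\ld Q'(u),\phi\rd$ turns this into the identity
$$\ld u,\phi\rd=\frac{\|u\|^2}{Q(u)}\,A(\phi)\qquad\text{for every }\phi\in D_0^{1,2}(\Om). \qquad(\star)$$

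Then I would insert the scaling. For $\la>0$ one has $(\la u)_+=\la u_+$, so the nonlinear term of $I'$ picks up a factor $\la^{2\cdot2^*_{\mu}-1}$ while the linear term picks up a factor $\la$, and therefore
$$\ld I'(\la u),\phi\rd=\la\ld u,\phi\rd-\la^{2\cdot2^*_{\mu}-1}A(\phi)=\Big(\la\,\frac{\|u\|^2}{Q(u)}-\la^{2\cdot2^*_{\mu}-1}\Big)A(\phi),$$
the last equality by $(\star)$. Since $2^*_{\mu}=\frac{2N-\mu}{N-2}>1$, the exponent $2\cdot2^*_{\mu}-2$ is strictly positive, so
$$\la:=\Big(\frac{\|u\|^2}{Q(u)}\Big)^{\frac{1}{2\cdot2^*_{\mu}-2}}>0$$
is admissible (this is where $\|u\|^2>0$ and $Q(u)>0$ are used) and it satisfies $\la\,\|u\|^2/Q(u)=\la^{2\cdot2^*_{\mu}-1}$. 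Hence $\ld I'(\la u),\phi\rd=0$ for all $\phi\in D_0^{1,2}(\Om)$, i.e. $I'(\la u)=0$, which is the assertion.

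There is essentially no analytic obstacle in this argument: it is the quotient rule followed by a one-parameter rescaling, and the $C^1$ regularity of $Q$ (equivalently, of the convolution nonlinearity) is already available from Section 2. The two points that need a little care are computing $\ld Q'(u),\phi\rd$ with the correct constant $2\cdot2^*_{\mu}$ while keeping the truncation $u_+$ throughout, and recording that $Q(u)>0$ and $\|u\|>0$ hold at the critical point — precisely what guarantees that $\la$ is a well-defined positive number, and hence the only genuinely new bookkeeping in the proof.
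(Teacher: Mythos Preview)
Your proof is correct and follows essentially the same route as the paper: differentiate the Rayleigh quotient $S_{H,L}(u,\Om)=\|u\|^2 Q(u)^{-1/2^*_{\mu}}$, set the derivative to zero to obtain the identity $\ld u,\phi\rd=\dfrac{\|u\|^2}{Q(u)}A(\phi)$, and then rescale by $\la=\big(\|u\|^2/Q(u)\big)^{1/(2(2^*_{\mu}-1))}$ to match the two homogeneities in $I'(\la u)$. Your additional remarks that $Q(u)>0$ and $\|u\|>0$ are implicit in the hypothesis, and that the constant in $\ld Q'(u),\phi\rd$ is $2\cdot 2^*_{\mu}$ by symmetry of the kernel, are careful bookkeeping but do not change the argument.
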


\begin{proof} Let $w \in D_0^{1,2}(\Om)$ then
\begin{align*}
\ld S_{H,L}& ^{\prime}(u,\Om), \;w \rd
\\ & = \ds \frac{2 \tau(u) \ds \int_{\Om}\na u.\na w~ dx - 2\|u\|^2 \tau(u)^{1-2^*_{\mu}} \ds \int_{\Om}\int_{\Om}\frac{|u_+(x)|^{2^{*}_{\mu}}|u_+(y)|^{2^{*}_{\mu}-2}u_+(y)w(y)}{|x-y|^{\mu}}~dxdy}{\ds \tau(u)^2}.
\end{align*}
As $S_{H,L}^{\prime}(u,\Om)(w)=0$, it implies
\begin{align*}
& \tau(u) \ds \int_{\Om}\na u.\na w~ dx =\|u\|^2 \tau(u)^{1-2^*_{\mu}} \ds \int_{\Om}\int_{\Om}\frac{|u_+(x)|^{2^{*}_{\mu}}|u_+(y)|^{2^{*}_{\mu}-2}u_+(y)w(y)}{|x-y|^{\mu}}~dxdy, \\ & \text{ that is, }  \int_{\Om}\na u.\na w~ dx  = \frac{\|u\|^2 \ds  \int_{\Om}\int_{\Om}\frac{|u_+(x)|^{2^{*}_{\mu}}|u_+(y)|^{2^{*}_{\mu}-2}u_+(y) w(y)}{|x-y|^{\mu}}~dxdy}{\ds \int_{\Om}\int_{\Om}\frac{|u_+(x)|^{2^{*}_{\mu}}|u_+(y)|^{2^{*}_{\mu}}}{|x-y|^{\mu}}~dxdy}.
\end{align*}
Therefore, if we choose $$\la^{2(2^*_{\mu}-1)}=\frac{\ds \|u\|^2 }{\ds \int_{\Om}\int_{\Om}\frac{|u_+(x)|^{2^{*}_{\mu}}|u_+(y)|^{2^{*}_{\mu}}}{|x-y|^{\mu}}~dxdy} $$
then we get  $ I^{\prime}(\la u) =0$. \QED
\end{proof}
\begin{Proposition}\label{prop5}
Let $\{v_n\} \subset \mathcal{M}$ be a Palais-Smale sequence for $S_{H,L}(. \; ,\Om)$ at level $c$. Then $u_n= \la_nv_n, \;\la_n=\left(S_{H,L}(v_n,\Om) \right)^{\frac{N-2}{2(N-\mu+2)}}$ is a Palais-Smale sequence for $I$ at level $\frac{N-\mu+2}{2(2N-\mu)}c^{\frac{2N-\mu}{N-\mu+2}}$.
\end{Proposition}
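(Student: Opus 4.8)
The plan is to transport the Palais--Smale data along the dilation $v\mapsto\lambda v$, which is exactly the rescaling that converted a critical point of the Rayleigh quotient into one of $I$ in the proof of Proposition~\ref{prop4}. First I would record the elementary reductions forced by $v_n\in\mathcal{M}$: there $\tau(v_n)=1$, hence $S_{H,L}(v_n,\Om)=\|v_n\|^2=:c_n\to c$, so $\{v_n\}$ is bounded in $D_0^{1,2}(\Om)$ and $\lambda_n=c_n^{\frac{N-2}{2(N-\mu+2)}}=\|v_n\|^{1/(2^{*}_{\mu}-1)}$ is a bounded sequence (bounded away from $0$ when $c>0$). This $\lambda_n$ is precisely the scaling used in Proposition~\ref{prop4}, since there $\lambda^{2(2^{*}_{\mu}-1)}=\|u\|^2/\tau(u)^{2^{*}_{\mu}}$ equals $\|v_n\|^2$ on $\mathcal{M}$.

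Second, I would turn the constrained Palais--Smale condition into a statement in the full dual. Since $S_{H,L}(\cdot,\Om)$ is $0$-homogeneous, i.e.\ $S_{H,L}(tv,\Om)=S_{H,L}(v,\Om)$ for $t>0$, Euler's identity gives $\langle S_{H,L}^{\prime}(v_n,\Om),v_n\rangle=0$. Writing $\mathcal{M}=\{\mathcal{N}(u)=1\}$ with $\mathcal{N}(u):=\int_{\Om}\int_{\Om}\frac{|u_+(x)|^{2^{*}_{\mu}}|u_+(y)|^{2^{*}_{\mu}}}{|x-y|^{\mu}}\,dx\,dy$, the constrained condition reads $S_{H,L}^{\prime}(v_n,\Om)-\theta_n\mathcal{N}^{\prime}(v_n)\to 0$ in $(D_0^{1,2}(\Om))^{\prime}$ for suitable Lagrange multipliers $\theta_n$; here $\mathcal{N}^{\prime}(v_n)$ is bounded in the dual and nonzero on $\mathcal{M}$ (indeed $\langle\mathcal{N}^{\prime}(v_n),v_n\rangle=2\cdot 2^{*}_{\mu}$), so the multipliers are well defined. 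Testing that relation with $v_n$, using $\langle S_{H,L}^{\prime}(v_n,\Om),v_n\rangle=0$ and the boundedness of $\{v_n\}$, forces $\theta_n\to 0$, and hence $S_{H,L}^{\prime}(v_n,\Om)\to 0$ in $(D_0^{1,2}(\Om))^{\prime}$.

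Third, the energy level: one computes the two homogeneous quantities along the scaling, $\|u_n\|^2=\lambda_n^2\|v_n\|^2=\lambda_n^2 c_n$ and $\mathcal{N}(u_n)=\lambda_n^{2\cdot 2^{*}_{\mu}}\mathcal{N}(v_n)=\lambda_n^{2\cdot 2^{*}_{\mu}}$, and the exponent in $\lambda_n$ is chosen exactly so that both of these equal $c_n^{\frac{2N-\mu}{N-\mu+2}}$. Therefore $I(u_n)=\big(\tfrac12-\tfrac1{2\cdot 2^{*}_{\mu}}\big)c_n^{\frac{2N-\mu}{N-\mu+2}}=\frac{N-\mu+2}{2(2N-\mu)}\,c_n^{\frac{2N-\mu}{N-\mu+2}}\to\frac{N-\mu+2}{2(2N-\mu)}\,c^{\frac{2N-\mu}{N-\mu+2}}$, the claimed level.

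Finally, for the differential I would substitute $\nabla u_n=\lambda_n\nabla v_n$ and $(u_n)_+=\lambda_n(v_n)_+$ into $\langle I^{\prime}(u_n),w\rangle$; using $\lambda_n^{2\cdot 2^{*}_{\mu}-1}=\lambda_n\|v_n\|^2$ (again from $\lambda_n=\|v_n\|^{1/(2^{*}_{\mu}-1)}$) and the formula for $S_{H,L}^{\prime}(v_n,\Om)$ from the proof of Proposition~\ref{prop4} evaluated at $\tau(v_n)=1$, one obtains $\langle I^{\prime}(u_n),w\rangle=\tfrac{\lambda_n}{2}\langle S_{H,L}^{\prime}(v_n,\Om),w\rangle$ for every $w\in D_0^{1,2}(\Om)$. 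Since $\{\lambda_n\}$ is bounded and $\|S_{H,L}^{\prime}(v_n,\Om)\|_{(D_0^{1,2}(\Om))^{\prime}}\to 0$ by the second step, this gives $\|I^{\prime}(u_n)\|_{(D_0^{1,2}(\Om))^{\prime}}\to 0$ and completes the proof. The only delicate point is the second step---passing from the constrained Palais--Smale condition to $S_{H,L}^{\prime}(v_n,\Om)\to 0$ in the full dual---which leans on the non-degeneracy of $\mathcal{N}^{\prime}$ along $\mathcal{M}$ and on the $C^1$ regularity of the functionals (available here); everything else is exponent bookkeeping that closes precisely because $\lambda_n$ is the critical scaling. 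If instead the hypothesis is read as a Palais--Smale sequence for the restriction $S_{H,L}(\cdot,\Om)|_{\mathcal{M}}$---which on $\mathcal{M}$ is just the Dirichlet energy---the same Lagrange computation yields $\theta_n\to c/2^{*}_{\mu}$, and feeding this into the identity above leaves the conclusion unchanged.
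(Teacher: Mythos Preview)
Your proof is correct and follows essentially the same route as the paper: both hinge on the identity $\langle I'(\lambda_n v_n),w\rangle=\tfrac{\lambda_n}{2}\langle S_{H,L}'(v_n,\Om),w\rangle$ (obtained from the derivative formula of Proposition~\ref{prop4} with $\tau(v_n)=1$ and the choice $\lambda_n^{2(2^*_\mu-1)}=\|v_n\|^2$), together with the boundedness of $\lambda_n$. Your treatment is in fact a bit more careful than the paper's in two respects: you justify explicitly, via $0$-homogeneity of $S_{H,L}(\cdot,\Om)$ and a Lagrange-multiplier argument, why the constrained Palais--Smale condition on $\mathcal{M}$ upgrades to $S_{H,L}'(v_n,\Om)\to 0$ in the full dual (the paper takes this for granted), and you compute the energy level directly as $I(u_n)=\big(\tfrac12-\tfrac{1}{2\cdot 2^*_\mu}\big)c_n^{\frac{2N-\mu}{N-\mu+2}}$ exactly, whereas the paper recovers it a posteriori from $\langle I'(u_n),u_n\rangle=o(1)$.
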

\begin{proof}
By the calculations of Proposition \ref{prop4} for any $w \in D_0^{1,2}(\Om) $, we have
\begin{align*}
 \frac{1}{2} \ld S_{H,L}^{\prime}(v_n,\Om),\; w\rd & = \ds \int_{\Om}\na v_n.\na w~ dx
 \\&  \; \; -\la_n ^{2(2^*_{\mu}-1)} \ds \int_{\Om}\int_{\Om}\frac{|(v_n)_+(x)|^{2^{*}_{\mu}}|(v_n)_+(y)|^{2^{*}_{\mu}-2}(v_n)_+(y)w(y)}{|x-y|^{\mu}}~dxdy.
\end{align*}
Now by multiplying the above equation by $\la_n$ for any $w \in D_0^{1,2}(\Om) $ we obtain
\begin{align*}
\ld I^{\prime}(u_n),\; w\rd = \ds \int_{\Om}\na u_n.\na w~ dx -  \ds \int_{\Om}\int_{\Om}\frac{|(u_n)_+(x)|^{2^{*}_{\mu}}|(u_n)_+(y)|^{2^{*}_{\mu}-2}(u_n)_+(y)w(y)}{|x-y|^{\mu}}~dxdy.
\end{align*}
\noi Since $v_n\in \mathcal{M}$, therefore $\la^{2(2^*_{\mu}-1)}= \|v_n\|^2= S_{H,L}(v_n,\Om)$ that is, $\la_n=S_{H,L}(v_n,\Om)^{\frac{N-2}{2(N-\mu+2)}}$. From $ S_{H,L}(v_n,\Om)= c+o(1)$ we get  $\la_n$ is bounded. In particular, it follows that $\ld I^{\prime}(\la_nv_n), \; w \rd \ra 0$ as $n \ra \infty$. Also, we have $u_n$ is bounded yields,
\begin{align*}
o(1)= \ld I^{\prime}(u_n), \;u_n\rd= \|u_n\|^2- \ds \int_{\Om}\int_{\Om}\frac{|(u_n)_+(x)|^{2^{*}_{\mu}}|(u_n)_+(y)|^{2^{*}_{\mu}}}{|x-y|^{\mu}}~dxdy.
\end{align*}
All the above facts imply that
\begin{align*}
\lim_{n\ra \infty}I(u_n)= \frac{N-\mu+2}{2(2N-\mu)}\lim_{n\ra \infty} \la_n^{2.2^*_{\mu}}= \frac{N-\mu+2}{2(2N-\mu)}c^{\frac{2N-\mu}{N-\mu+2}}.
\end{align*}

\end{proof}
\begin{Remark} Since we proved $I$ satisfies Palais-Smale  condition in $(\beta,2 \beta)$. Then $S_{H,L}(. \; ,\Om)$ satisfies satisfies Palais-Smale  condition in $\left(S_{H,L}, \; 2^{\frac{N-\mu+2}{2N-\mu}}S_{H,L}\right)$ by using Proposition \ref{prop4}.
\end{Remark}

\begin{Lemma}
If $\ds f^{\sigma}_t(x):= \ds \frac{g^{\sigma}_t(x)}{ \|g_t^{\sigma}\|_{NL}}$ and  $\ds f_0(x):= \ds \frac{g_0(x)}{ \|g_0\|_{NL}}$
then
\begin{align*}
\ds \lim_{R\ra \infty} S_{H,L}(f^{\sigma}_t,\Om)=S_{H,L}(u^{\sigma}_t)= S_{H,L},
\end{align*}
uniformly with respect to $\sigma \in \Sigma $ and $t\in [0,1)$.


%

\end{Lemma}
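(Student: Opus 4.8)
The plan is to exploit the scale–invariance of the Rayleigh quotient $S_{H,L}(\cdot,\Om)$ and to reduce everything to the five estimates already collected in Lemma~\ref{lem6}; no new hard analysis is needed. First I record two normalizations. Because $u^{\sigma}_t\ge 0$ and $0\le\upsilon_R\le 1$, the function $g^{\sigma}_t=u^{\sigma}_t\upsilon_R$ is nonnegative, so $(g^{\sigma}_t)_+=g^{\sigma}_t$ and $(u^{\sigma}_t)_+=u^{\sigma}_t$ and the positive parts disappear from every nonlocal term. Moreover, since the $NL$–norm is $1$–homogeneous, $\|f^{\sigma}_t\|_{NL}=1$, hence $f^{\sigma}_t\in\mathcal{M}$ and
\[
S_{H,L}(f^{\sigma}_t,\Om)=\int_{\Om}|\na f^{\sigma}_t|^2\,dx=\frac{\|g^{\sigma}_t\|^2}{\|g^{\sigma}_t\|_{NL}^2}.
\]
Similarly $S_{H,L}(u^{\sigma}_t)=\|u^{\sigma}_t\|^2/\|u^{\sigma}_t\|_{NL}^2=S_{H,L}$ by Lemma~\ref{lem6}(3) (equivalently, $u^{\sigma}_t$ is one of the extremals of Lemma~\ref{lem5}), and in particular $\|u_0\|^2/\|u_0\|_{NL}^2=S_{H,L}$. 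Thus it suffices to prove that $\|g^{\sigma}_t\|^2\to\|u_0\|^2$ and $\|g^{\sigma}_t\|_{NL}^2\to\|u_0\|_{NL}^2$, both uniformly in $\sigma\in\Sigma$ and $t\in[0,1)$, as $R\to\infty$.

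For the numerator I would use $\big|\,\|g^{\sigma}_t\|-\|u^{\sigma}_t\|\,\big|\le\|g^{\sigma}_t-u^{\sigma}_t\|$ together with Lemma~\ref{lem6}(4) and Lemma~\ref{lem6}(1) (which gives $\|u^{\sigma}_t\|=\|u_0\|$), obtaining $\|g^{\sigma}_t\|^2\to\|u_0\|^2$ uniformly. For the denominator, Lemma~\ref{lem6}(5) and Lemma~\ref{lem6}(2) yield $\|g^{\sigma}_t\|_{NL}^{2\cdot 2^*_{\mu}}\to\|u_0\|_{NL}^{2\cdot 2^*_{\mu}}>0$ uniformly; hence for all $R$ large the quantity $\|g^{\sigma}_t\|_{NL}^{2\cdot 2^*_{\mu}}$ stays in a fixed compact subinterval of $(0,\infty)$, and composing with the map $s\mapsto s^{1/2^*_{\mu}}$, which is uniformly continuous there, gives $\|g^{\sigma}_t\|_{NL}^2\to\|u_0\|_{NL}^2$ uniformly. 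Combining the two, with the denominator bounded away from $0$ uniformly, the quotient converges: $S_{H,L}(f^{\sigma}_t,\Om)\to\|u_0\|^2/\|u_0\|_{NL}^2=S_{H,L}$, uniformly in $(\sigma,t)$. The case $t=0$ (where $u^{\sigma}_0=u_0$, $g^{\sigma}_0=g_0$, $f^{\sigma}_0=f_0$) is covered along the way, which also yields the statement for $f_0$.

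There is no real obstacle here, since the genuinely technical estimates — the Hardy--Littlewood--Sobolev splitting of the mass lost by the cut–off and the gradient estimates on the annular shells — are exactly what Lemma~\ref{lem6} provides. The only two points that require a line of care are: (i) discarding the positive part, which is legitimate precisely because the bubble $u^{\sigma}_t$ and the cut–off $\upsilon_R$ are nonnegative; and (ii) ensuring that passing through the exponent $1/2^*_{\mu}$ and through the division by $\|g^{\sigma}_t\|_{NL}^2$ does not spoil uniformity, which is guaranteed by the uniform positive lower bound on $\|g^{\sigma}_t\|_{NL}$ coming from Lemma~\ref{lem6}(2),(5).
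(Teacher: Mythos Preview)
Your proof is correct and is precisely the approach the paper intends: the paper's own proof consists of the single sentence ``This is a trivial consequence of Lemma~\ref{lem6}'', and you have simply spelled out the routine details of how the five items of Lemma~\ref{lem6} combine via homogeneity of the Rayleigh quotient to give the uniform convergence.
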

\begin{proof}
This is a trivial consequence of Lemma \ref{lem6}. \QED
\end{proof}
\noi In particular, if $R> 1 $ sufficiently large then we can achieve that
\begin{align*}
\sup _{\sigma ,t}(f^{\sigma}_t,\Om )< S_1< 2^{\frac{N-\mu+2}{2N-\mu}}S_{H,L} \text{ for some  } S_1 \in \mathbb{R}.
\end{align*}

\noi \textbf{Proof of Theorem \ref{cothm1} completed.} As we have established, $S_{H,L}(. \; ,\Om)$ satisfies Palais-Smale  at level $\alpha$ on $\mathcal{M}$ for $\alpha \in \left(S_{H,L},\; 2^{\frac{N-\mu+2}{2N-\mu}}S_{H,L}\right)$. We will argue by contradiction. If  $S_{H,L}(. \; ,\Om)$ does not admit a critical value in this range. By the deformation lemma (see A.~Bonnet \cite[Theorem 2.5]{bonnet}) for any $\alpha \in \left(S_{H,L},\; 2^{\frac{N-\mu+2}{2N-\mu}}S_{H,L}\right)$ there exist $\de >0 $ and an onto  homeomorphism function  $\psi: \mathcal{M} \ra \mathcal{M}$ such that $\psi(\mathcal{M}_{\alpha+ \de})\subset \mathcal{M}_{\alpha-\de}$ where $\mathcal{M}_{\alpha}=\{u\in \mathcal{M}\; ;\; S_{H,L}(u,\Om)<\alpha\}$. For a given fixed $\e >0$ we can cover the interval $[S_{H,L}+\e,S_1]$ by finitely many such $\de-$ intervals and composing the deformation maps we get an onto  homeomorphism function  $\psi: \mathcal{M}\ra \mathcal{M}$ such that $\psi(\mathcal{M}_{S_1})\subset \mathcal{M}_{S_{H,L}+\e}$.  Also, we can assume  $\psi(u)=u$ for all $u$ whenever $S_{H,L}(u,\Om)\leq S_{H,L}+ \e/2$.

By the concentration-compactness lemma (see \cite{gaoyang}) and Lemma 1.2 of \cite{yang}, for any sequence $\{u_m\} \in \mathcal{M}_{S_{H,L}+\frac{1}{m}}$  there exists a subsequence  and $x^{(0)} \in \overline{\Om}$ such that
\begin{align*}
\left(\int_{\Om}\frac{|(u_m)_+(y)|^{2^*_{\mu}}}{|x-y|^{\mu}}dy\right)|(u_m)_+|^{2^*_{\mu}} dx \rightharpoondown \delta_{x^{(0)}}, \; |\na u_m|^2 dx \rightharpoondown S_{H,L}\delta_{x^{(0)}}
\end{align*}
 weakly in the sense of measure. This implies given any neighbourhood $V$ of $\overline{\Om}$, there exists a $\e>0$ such that $G(\mathcal{M}_{S_{H,L}+\e})\subset V$.\\
Since $\Om$ is a smooth  bounded domain, therefore we can find a  neighbourhood $V$ of $ \overline{\Om}$ such that for any $q\in V$ there exits a unique nearest neighbour $r=\pi(q)\in  \overline{\Om}$ such that the projection $\pi $ is continuous. Let $\e$ be chosen for such a neighbourhood $V$, and let $\psi: \mathcal{M}\ra \mathcal{M}$ be the  corresponding onto homeomorphism. Define the map $D:\Sigma\times [0,1]\ra \overline{\Om} $ given by
\begin{align*}
D(\sigma,t)= \pi\left(G(\psi(f^{\sigma}_t))\right.
\end{align*}
It is easy to see that $D$ is well-defined, continuous and satisfies
\begin{align*}
D(\sigma,0)= \pi\left(G(\psi(f_0))\right)=: y_0 \in \overline{\Om} \text{ and } D(\sigma,1)=\sigma \text{ for all } \sigma \in \Sigma.
\end{align*}
This implies that $D$ is a contraction of $\Sigma$ in $\overline{\Om}$ contradicting the hypothesis of $\Om$.
Hence, our assumption is wrong implies that $S_{H,L}(. \; ,\Om)$ has a critical value that means there exits a $u\in D_0^{1,2}(\Om)$ such that $u$ is a solution to problem $(P)$. Now, using  same arguments and assertions as in \cite[Proposition 3.1]{moroz2}, we have $u \in L^{\infty}(\Om)$. It implies that $|-\De u|\leq C(1+|u|^{2^*-1})$ and from  standard elliptic regularity we have  $u \in  C^2(\overline{\Om})$. Thus, by the maximum principle, $u$ is a positive solution of the problem $(P)$.
 Hence the proof of Theorem \ref{cothm1} is complete.
\QED

\medskip
{\bf Acknowledgements}.
 V.D. R\u adulescu acknowledges the support through the Project
MTM 2017-85449-P of the DGISPI (Spain).

 \end{document}